\numberwithin{equation}{section}
\theoremstyle{plain}
\newtheorem{theorem}{Theorem}[section]
\newtheorem{lemma}[theorem]{Lemma}
\newtheorem{proposition}[theorem]{Proposition}
\newtheorem{corollary}[theorem]{Corollary}
\newtheorem{conjecture}[theorem]{Conjecture}
\theoremstyle{definition}
\newtheorem{definition}[theorem]{Definition}
\newtheorem{notation}[theorem]{Notation}
\newtheorem{remark}[theorem]{Remark}
\newtheorem{question}[theorem]{Question}
\let\c@equation\c@theorem  % incorporate equation numbering
\newcommand{\mb}{\mathbb}
\newcommand{\CC}{\mb{C}}
\newcommand{\LL}{\mb L}
\newcommand{\NN}{\mb N}
\newcommand{\OO}{\mb O}
\newcommand{\PP}{\mb P}
\newcommand{\ZZ}{\mb Z}
\newcommand{\kk}{{\Bbbk}}
\newcommand{\mf}{\mathfrak}
\newcommand{\mc}{\mathcal}
\newcommand{\sE}{\mc E}
\newcommand{\sF}{\mc F}
\newcommand{\sG}{\mc G}
\newcommand{\sI}{\mc I}
\newcommand{\sJ}{\mc J}
\newcommand{\sK}{\mc K}
\newcommand{\sL}{\mc L}
\newcommand{\sM}{\mc M}
\newcommand{\sN}{\mc N}
\newcommand{\sO}{\mc O}
\newcommand{\sT}{\mc T}
\newcommand{\dra}{\dashrightarrow}
\newcommand{\ssm}{\smallsetminus}
\newcommand{\hra}{\hookrightarrow}
\newcommand{\DMO}{\DeclareMathOperator}
\DMO{\Aut}{Aut}
\DMO{\Div}{div}
\DMO{\ev}{ev}
\DMO{\Bs}{Bs}
\DMO{\GKdim}{GKdim}
\DMO{\Pic}{Pic}
\DMO{\Num}{Num}
\DMO{\Cl}{Cl}
\DMO{\CaCl}{CaCl}
\DMO{\im}{im}
\newcommand{\bbar}[1]{\overline{#1}}
\DeclareMathOperator{\shTor}{\mathcal{T}\!\mathit{or}}
\providecommand{\abs}[1]{\lvert#1\rvert}
\newcommand{\beq}{\begin{equation}}
\newcommand{\eeq}{\end{equation}}
\newcommand{\Anum}{A^1_{\rm{Num}}}
\begin{document}

\title[The universal enveloping algebra of the Witt algebra is not noetherian]
{The universal enveloping algebra of the Witt algebra\\ is not noetherian}

\author{Susan J. Sierra and Chelsea Walton}

\address{Sierra: School of Mathematics, The University of Edinburgh, Edinburgh EH9 3JZ, United Kingdom}

\email{s.sierra@ed.ac.uk}

\address{Walton: Department of Mathematics, Massachusetts Institute of Technology, Cambridge, Massachusetts 02139,
USA}

\email{notlaw@math.mit.edu}

\bibliographystyle{alpha}       % Set the bibliography style to AMS

\begin{abstract}
This work is prompted by the long standing question of whether it is possible for the universal enveloping algebra of an infinite dimensional Lie algebra to be noetherian. To address this problem, we answer a 23-year-old question of Carolyn Dean and Lance Small; namely, we prove that the universal enveloping algebra of the Witt (or centerless Virasoro) algebra is not noetherian. To show this, we prove our main result: the universal enveloping algebra of the positive part of the Witt algebra is not noetherian. We employ algebro-geometric techniques from the first author's classification of (noncommutative) birationally commutative projective surfaces.

As a consequence of our main result, we also show that the enveloping algebras of many other (infinite dimensional) Lie algebras  are not noetherian.  These Lie algebras include the Virasoro algebra and all infinite dimensional  $\mathbb{Z}$-graded simple Lie algebras of polynomial growth.

\end{abstract}

\subjclass[2010]{14A22, 16S30, 16S38, 17B68}

\keywords{birationally commutative algebra, centerless Virasoro algebra, infinite dimensional Lie algebra, non-noetherian, universal enveloping algebra, Witt algebra}

\maketitle

%\tableofcontents

\setcounter{section}{-1}

%%%%%%%%%%%%%%%%%%%%%%%%%%%%%%%%%%%%%%%
%%%%%%%%%%%%%%%%%%%%%%%%%%%%%%%%%%%%%%%
%%%%%%%%%%%%%%%%%%%%%%%%%%%%%%%%%%%%%%%

\section{Introduction}

To begin, we take $\kk$ to be a  field of characteristic 0 and we let an unadorned $\otimes$ mean $\otimes_{\kk}$. (If $X$ is a scheme, and $\mc{A}$, $\mc{B}$ are quasicoherent sheaves on $X$, we write $\mc{A} \otimes_X \mc{B}$ rather than $\mc{A} \otimes_{\mc{O}_X} \mc{B}$.)
We are motivated by the  well-known question of whether it is possible for the universal enveloping algebra of an infinite dimensional Lie algebra to be noetherian (cf. \cite[page xix]{GW}).  
It is generally thought that the answer to this question should be ``no," and we state this as a conjecture:

\begin{conjecture}  \label{conj:main}
A Lie algebra $L$ is finite dimensional if and only if the universal enveloping algebra $U(L)$ is noetherian.
\end{conjecture}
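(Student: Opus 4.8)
I would treat the two implications separately, the first being routine. If $\dim_{\kk} L = d < \infty$, then by the PBW theorem the standard (ascending) filtration on $U(L)$ has associated graded ring $\operatorname{gr} U(L) \cong \operatorname{Sym}(L)$, a commutative polynomial ring in $d$ variables and hence noetherian by Hilbert's basis theorem; since a $\ZZ_{\ge 0}$-filtered $\kk$-algebra with left and right noetherian associated graded ring is itself left and right noetherian, $U(L)$ is noetherian. So the real content is the converse: \emph{if $L$ is infinite dimensional, then $U(L)$ is not noetherian}, and the plan is to reduce this to a short list of ``universally bad'' subalgebras.

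The reduction rests on a descent principle for enveloping algebras. For any Lie subalgebra $L' \subseteq L$, extend a basis of $L'$ to a basis of $L$; then PBW realizes $U(L)$ as a free right $U(L')$-module having $1$ among its basis, so $U(L) = U(L') \oplus C$ as right $U(L')$-modules with $C$ a right $U(L')$-submodule. For any left ideal $I \subseteq U(L')$ one then computes $U(L)\, I = I \oplus C I$ inside this decomposition, whence $U(L)\,I \cap U(L') = I$; consequently an ascending chain of left ideals of $U(L')$ lifts to an ascending chain of left ideals of $U(L)$ with the same contractions, and so $U(L)$ left noetherian forces $U(L')$ left noetherian (symmetrically on the right). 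Contrapositively, to prove $U(L)$ non-noetherian it is enough to produce \emph{some} subalgebra $L' \subseteq L$ whose enveloping algebra is not noetherian, and I would use two ready sources: (i) an infinite-dimensional \emph{abelian} subalgebra $L'$, for which $U(L') = \operatorname{Sym}(L')$ is a polynomial ring in infinitely many variables and thus visibly violates the ascending chain condition; and (ii) a copy of the positive part of the Witt algebra $W_{\ge 1} = \bigoplus_{n \ge 1} \kk e_n$, with $[e_i, e_j] = (j-i)\,e_{i+j}$, for which $U(W_{\ge 1})$ is not noetherian by the main theorem of this paper.

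Granting this, I would dispatch the families highlighted in the abstract by locating a subalgebra of type (i) or (ii). Both the Witt algebra $W = \bigoplus_{n \in \ZZ}\kk e_n$ and the Virasoro algebra contain $W_{\ge 1}$ as a subalgebra --- in the Virasoro case because the defining $2$-cocycle $\propto \delta_{i+j,0}$ never contributes among strictly positive modes --- so $U(W)$ and $U(\mathrm{Vir})$ are not noetherian; the former settles the Dean--Small question. For an infinite-dimensional $\ZZ$-graded simple Lie algebra of polynomial growth I would appeal to Mathieu's classification of such algebras and check case by case that one of (i), (ii) always applies: an affine Kac--Moody algebra or loop algebra $\mathfrak g \otimes \kk[t,t^{-1}]$ (possibly centrally extended) contains the infinite-dimensional abelian subalgebra spanned by $\{x \otimes t^n : n \ge 1\}$ for any fixed nonzero $x \in \mathfrak g$, while the remaining (Cartan-type) possibility is $W$ itself, which contains $W_{\ge 1}$.

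The step I expect to be the genuine obstacle --- and the reason this strategy proves the conjecture only for the (admittedly large) families above rather than in full --- is that an arbitrary infinite-dimensional Lie algebra comes with no structure theory, and might contain neither an infinite-dimensional abelian subalgebra nor a copy of $W_{\ge 1}$, so that neither source (i) nor (ii) is available and an essentially different mechanism for detecting non-noetherian behaviour would be needed. Within the present paper, accordingly, the hard technical work is not in the reductions above but in the input feeding source (ii): proving that $U(W_{\ge 1})$ itself is not noetherian, which I would carry out --- and which this paper carries out --- via algebro-geometric techniques drawn from the classification of birationally commutative projective surfaces.
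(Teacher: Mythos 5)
Two preliminary points about context before assessing the argument. First, the statement you are addressing is stated in the paper as a \emph{Conjecture}, and the paper does not prove it in full generality: it cites McConnell--Robson for the easy direction (finite dimensional $\Rightarrow$ noetherian) and then proves the converse only for particular families (the positive Witt algebra $W_+$, hence $W$ and the Virasoro algebra, and the $\ZZ$-graded simple Lie algebras of polynomial growth via Mathieu's classification). You correctly recognise this and are explicit that your strategy yields only those families, not the general converse --- which is the right thing to say, since the general converse is genuinely open. Second, for the cases that \emph{are} handled, your approach matches the paper's: the PBW-based descent to Lie subalgebras is the paper's Lemma~\ref{lem:liesubalg}, the two sources of ``bad'' subalgebras (infinite-dimensional abelian, or a copy of $W_+$) are exactly the content of Lemma~\ref{lem:U(L)}, and the case-by-case run through Mathieu's list is Corollary~\ref{cor:simple}. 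Your hands-on contraction argument $U(L)\,I \cap U(L') = I$ via the PBW decomposition $U(L) = U(L') \oplus C$ is correct and is a tidy, self-contained replacement for the faithful-flatness citation (to \cite[Exercise~17T]{GW}) that the paper uses in Lemma~\ref{lem:liesubalg}; it buys you a proof that does not need an external reference, at no real cost.

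The one concrete error is in your reading of Mathieu's classification: you write that ``the remaining (Cartan-type) possibility is $W$ itself,'' but in Mathieu's theorem the Cartan-type algebras $\mathbb{W}_n$, $\mathbb{S}_n$, $\mathbb{H}_{2m}$, $\mathbb{K}_{2m+1}$ form a class distinct from the Witt algebra, which appears as its own fourth case; they are not $W$ (only $\mathbb{W}_1$, the derivations of $\kk[x]$, even resembles it). As written your case analysis therefore omits a whole family. The gap is recoverable by the same method --- the paper's Corollary~\ref{cor:simple} exhibits, inside each Cartan-type algebra, either a copy of $W_+$ (in $\mathbb{W}_1$, hence in $\mathbb{W}_n$) or an explicit infinite-dimensional abelian subalgebra (e.g.\ $\kk[x_i]D_j$ with $i\neq j$ in $\mathbb{S}_n$ for $n\geq 2$, $\kk[x_j]D_{j'}$ in $\mathbb{H}_{2m}$, and the span of the $D_K(x_j^r)$ in $\mathbb{K}_{2m+1}$) --- but those verifications need to actually be carried out; they do not come for free from the Witt case.
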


One direction holds easily. Namely, the universal enveloping algebra of a finite dimensional Lie algebra is noetherian; see \cite[Corollary~1.7.4]{MR}. To address the converse, many have considered the universal enveloping algebra of the infinite dimensional Lie algebra $W$ below.

\begin{definition}\label{def:U(W)} {} [$W$, $U(W)$] 
The {\it Witt (or centerless Virasoro) algebra} $W$ is defined to be the Lie algebra $W$ with basis $\{e_n\}_{n \in \mathbb{Z}}$ and Lie bracket $[e_n, e_m] = (m-n)e_{n+m}$. We take $U(W)$ to be the  universal enveloping algebra of $W$, which  is $\mathbb{Z}$-graded with deg($e_n$) = $n$. 
\end{definition}

Note that $W$ is realized as the  Lie algebra of derivations of $\kk[x,x^{-1}]$, where $e_n = x^{n+1} \frac{d}{dx}$. If $\kk = \CC$, then  $W$ is also  the complexification of the Lie algebra of polynomial vector fields on the circle. Here, $e_n = -i\exp(in \theta) \frac{d}{d \theta}$, where $\theta$ is the angular parameter.

It is well known that $U(W)$ is a domain, has infinite global dimension, and has sub-exponential growth \cite[Section~3]{DS}.  On the other hand, regarding Conjecture~\ref{conj:main}, we have: 

\begin{question} \label{ques:Lance} (C. Dean and L. Small, 1990) Is $U(W)$ noetherian?
\end{question}

We consider the following subalgebra of $U(W)$, which aids in answering Question~\ref{ques:Lance} above.

\begin{definition}  \label{def:U(W_+)} {} [$W_+$, $U(W_+)$]
The {\em  positive (part of the) Witt algebra}  is defined to be the Lie subalgebra $W_+$ of $W$ generated by $\{e_n\}_{n \geq 1}$.
The universal enveloping algebra $U(W_+)$ is then the following
 subalgebra  of $U(W)$:
$$U(W_+) = \frac{\kk \langle e_n ~|~ n \geq 1 \rangle}{\left([e_n, e_m] = (m-n) e_{n+m}\right)},$$ 
which is $\NN$-graded with $\deg e_n = n$.  
\end{definition}

It is  a general fact that if $L'$ is a Lie subalgebra of $L$ and $U(L)$ is noetherian, then $U(L')$ is noetherian; see Lemma~\ref{lem:liesubalg}.  Because of this, many have asked whether $U(W_+)$ is noetherian.  We show that it is not,  thus 
 answering Question~\ref{ques:Lance} as follows.

\begin{theorem} \label{thm:main} The universal enveloping algebra of the positive Witt algebra $U(W_+)$ is neither right or left noetherian. As a consequence, the universal enveloping algebra of the full Witt algebra $U(W)$ is neither right or left noetherian. Thus, Conjecture~\ref{conj:main} holds for both $W$ and $W_+$.
\end{theorem}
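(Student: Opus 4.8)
The plan is to prove that $U(W_+)$ is not noetherian by realizing it, or a closely related ring, as a \emph{birationally commutative} graded algebra and then applying the structural results on such algebras from the first author's work on noncommutative birationally commutative projective surfaces. More precisely, I would first produce a large homomorphic image of $U(W_+)$ that acts faithfully on a commutative function field, so that the geometry of an automorphism of a surface (or curve) becomes available. The natural candidate is the action of $W_+$ as vector fields: since $e_n = x^{n+1}\tfrac{d}{dx}$, the algebra $U(W_+)$ maps to the ring of differential operators on $\kk[x]$ (or on $\kk[x,x^{-1}]$), and one can build from this a $\ZZ$- or $\NN$-graded ring $B$ with a distinguished degree-one element whose conjugation action induces an (infinite-order) automorphism of a commutative base. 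The key point is that the relations $[e_n,e_m]=(m-n)e_{n+m}$ force, inside any such image, the kind of ``twisted'' multiplication $b\cdot s = \sigma(s)\cdot b$ that is the hallmark of a twisted homogeneous coordinate ring or an idealizer therein.

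The main steps, in order, would be: (1) Reduce to $W_+$ via Lemma~\ref{lem:liesubalg}, so that non-noetherianity of $U(W)$ (and Conjecture~\ref{conj:main} for $W$ and $W_+$) follows from the $W_+$ statement, which is exactly what the theorem asserts and is the only thing needing a real argument. (2) Exhibit a surjection from $U(W_+)$ onto a graded algebra $B$ that is ``birationally commutative of GK-dimension two'' — I expect $B$ to be built from $\kk[x]$ (thought of as functions on $\mathbb{A}^1$) together with the automorphism or the locally nilpotent derivation coming from $e_1 = x^2\tfrac{d}{dx}$; concretely one looks at the subalgebra of $U(W_+)$ generated by $e_1$ and $e_2$ modulo a suitable ideal, which should be an idealizer subalgebra inside a twisted homogeneous coordinate ring $B(X,\sL,\sigma)$. (3) Invoke the classification: such idealizer rings (idealizers at a point or a curve on a surface, in the sense of the birationally commutative surface program) are known to fail to be noetherian on one side precisely when the relevant orbit is infinite — here the orbit of the point at infinity under the automorphism induced by $x\mapsto x/(1-cx)$ (the flow of $x^2\partial_x$) is infinite. (4) Pull this non-noetherian behavior back: a noetherian ring has noetherian homomorphic images and noetherian subrings behave controllably, so exhibiting a non-noetherian quotient (or a non-finitely-generated right ideal whose generators can be named explicitly using the $e_n$) of $U(W_+)$ suffices to conclude that $U(W_+)$ itself is neither left nor right noetherian. (5) Finally, since $U(W_+)\subseteq U(W)$ as a Lie subalgebra's enveloping algebra, Lemma~\ref{lem:liesubalg} upgrades this to $U(W)$.

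The hard part, and where essentially all the content lies, will be step (2)–(3): identifying the right quotient of $U(W_+)$ and proving it really is (isomorphic to, or closely tied to) an idealizer subring of a twisted homogeneous coordinate ring on a projective surface, in a form to which the classification of birationally commutative projective surfaces applies. This requires (a) choosing the geometric data $(X,\sL,\sigma)$ correctly — matching the combinatorics of the Witt relations $[e_n,e_m]=(m-n)e_{n+m}$ to the multiplication in $B(X,\sL,\sigma)$, presumably with $X$ a surface fibered so that $\sigma$ restricts to the affine automorphism above — and (b) controlling the kernel of the map $U(W_+)\twoheadrightarrow B$ well enough to transfer non-noetherianity. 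I would expect to need a careful Hilbert-series or GK-dimension computation to show the image has GK-dimension exactly $2$ (so that the surface classification, rather than some higher-dimensional and less rigid theory, is the relevant one), together with an explicit description of a right ideal of $B$ (equivalently a chain of right ideals) that is not finitely generated, coming from the infinite $\sigma$-orbit. Everything else — the reduction lemmas, the left/right symmetry, the deduction of the conjecture in the two stated cases — should be formal once the geometric model is in place.
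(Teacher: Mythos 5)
Your high-level framework is the right one, and it does match the paper's strategy: reduce to $W_+$ via Lemma~\ref{lem:liesubalg}, build a birationally commutative homomorphic image of $U(W_+)$, analyze it with the twisted-homogeneous-coordinate-ring / idealizer machinery, and exhibit a non-finitely-generated one-sided ideal. But there are several concrete points where the plan as written either would not work or is missing the essential content.

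First, the proposed geometric model is not correct. You suggest starting from the differential-operator realization $e_n = x^{n+1}\tfrac{d}{dx}$ acting on $\kk[x]$, and expect the flow $x \mapsto x/(1-cx)$ of $x^2\partial_x$ to supply the automorphism $\sigma$. That realization lands you in a ring of differential operators, which is an Ore extension by a \emph{derivation}, not an automorphism, and there is no canonical discrete automorphism in the flow. The claim that the relations $[e_n,e_m]=(m-n)e_{n+m}$ ``force'' the twisted-commutative multiplication $bt = \sigma(b)t$ in any such image is false: commutation with $e_1$ is a derivation-type relation, and turning it into conjugation by an automorphism inside a quotient is precisely the nonobvious step. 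The paper's homomorphism $\rho: U(W_+) \to \kk(X)[t;\tau]$ (Proposition~\ref{prop:rho}), with $X = V(xz-y^2)\subset\PP^3$, $\tau$ an explicit linear automorphism, $\rho(e_1)=t$ and $\rho(e_2)=ft^2$ for a specific rational function $f$, is a genuine discovery (arrived at via the truncated point schemes), and nothing in your sketch produces it or an equivalent.

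Second, your GK-dimension target is wrong. You say you expect to show GK-dimension $2$ so that ``the surface classification'' applies. In fact for a birationally commutative \emph{surface} one has $\GKdim = \dim X + 1 = 3$, and the paper proves $\GKdim R = 3$ (Corollary~\ref{cor:dimR}). GK-dimension $2$ would put you in the noncommutative \emph{curve} regime, where the rings are much more constrained and, in particular, typically noetherian. A Hilbert-series computation along the lines you propose would return $3$, not $2$, and the theory you would then need is the surface theory.

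Third, ``invoke the classification'' is not a step that closes the argument. The paper does not simply cite a theorem saying idealizers with infinite non-dense orbits are non-noetherian; the bulk of the proof (Lemma~\ref{lem:allbase} through Theorem~\ref{thm:Tfingen}) consists of computing the base loci of the graded pieces $V_{2n}$, restricting to a specific non-reduced curve $Y = 2C_r \subset X$, constructing an overring $T \supseteq \alpha(R^{(2)})$ inside $B(Y,\sM,\bar\tau^2)$, proving $T$ is a \emph{finite} left $R^{(2)}$-module, and only then exhibiting the non-finitely-generated left ideal $J \subset T$ coming from the ideal sheaf of the reduced curve. The step ``$T$ is module-finite over $R^{(2)}$'' is the technical heart and requires cohomological vanishing estimates on $Y \cong \PP^1_{\kk[\epsilon]}$; it is not a formal consequence of the orbit being infinite. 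You will need something of this nature, because $R^{(2)}$ itself sits strictly inside the relevant twisted homogeneous coordinate ring and you must control that gap.

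So: right plan at the top level, but the proposal is missing the one idea that makes the whole thing go (the explicit map $\rho$ to $\kk(X)[t;\tau]$), gets the dimension count wrong, and treats as a citation what is actually the main argument.
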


The first step in the proof of the theorem is to produce a homomorphic image of $U(W_+)$ that is {\em birationally commutative}:  
that is, an explicit  algebra homomorphism $\rho:  U(W_+)\to K[t; \tau]$, where $K$ is a field and $\tau \in \Aut_\kk(K)$. We then show, using the techniques of \cite{S-surfclass},  that   $\rho(U(W_+))=:R$ is not noetherian.

The homomorphism  $\rho$ was constructed using 
 the truncated point schemes of $U(W_+)$:  roughly speaking,   the geometric objects parameterizing graded $U(W_+)$-modules with Hilbert series $1+s+\dots + s^n$.  However,  the point schemes are not needed for the proof of Theorem~\ref{thm:main}.  
 We will return to the study of these point schemes in future work.

As a consequence of Theorem~\ref{thm:main}, we also show that many other infinite dimensional Lie algebras satisfy Conjecture~\ref{conj:main}.  (See Section~\ref{CONSEQ} for definitions.)

\begin{corollary}\label{cor:main}
Let $L$ be one of the following infinite dimensional Lie algebras:
\begin{enumerate}
\item the Virasoro algebra $V$; or
\item an infinite dimensional $\mathbb{Z}$-graded simple Lie algebra of polynomial growth.
\end{enumerate}
Then the universal enveloping algebra $U(L)$ is not noetherian.  
Moreover, all central factors of $U(V)$ are non-noetherian. 
\end{corollary}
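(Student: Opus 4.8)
The plan is to reduce everything to Theorem~\ref{thm:main} via the general principle that a non-noetherian universal enveloping algebra forces non-noetherianity of enveloping algebras of larger Lie algebras, combined with Lemma~\ref{lem:liesubalg}. Concretely, for part (1), the Virasoro algebra $V$ is the one-dimensional central extension of $W$ by a central element $c$, so $W$ is a quotient Lie algebra of $V$ (kill $c$), hence $U(W)$ is a homomorphic image of $U(V)$; since a homomorphic image of a noetherian ring is noetherian and $U(W)$ is not noetherian by Theorem~\ref{thm:main}, neither is $U(V)$. For the ``moreover'' clause, each central factor is $U(V)/(c - \lambda)$ for $\lambda \in \kk$; since $c - \lambda$ maps to $0$ under the surjection $U(V) \twoheadrightarrow U(W)$ for $\lambda = 0$, and for general $\lambda$ one observes that $U(V)/(c-\lambda)$ still surjects onto $U(W_+)$ (the central extension splits over $W_+$, or more directly $W_+$ lifts to a subalgebra of $V$ on which $c$ acts as $0$, so the composite $U(W_+) \hookrightarrow U(V) \twoheadrightarrow U(V)/(c-\lambda)$ followed by the observation that $W_+$ still embeds). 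In fact the cleanest route: $W_+$ is a Lie subalgebra of $V$ (the positive-degree part is unaffected by the central extension since $[e_n,e_m]$ for $n,m\geq 1$ never hits $e_0$ or $c$), so $U(W_+) \subseteq U(V)/(c-\lambda)$ for every $\lambda$; if any central factor were noetherian, then by Lemma~\ref{lem:liesubalg} $U(W_+)$ would be noetherian, contradicting Theorem~\ref{thm:main}.

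For part (2), the strategy is to show that any infinite dimensional $\ZZ$-graded simple Lie algebra $L$ of polynomial growth contains a copy of $W_+$ (or of $W$, or of $W_+$ up to finite-codimensional adjustment) as a Lie subalgebra, and then apply Lemma~\ref{lem:liesubalg}. Here I would invoke the classification of such Lie algebras: by the theorem of Mathieu (building on Kac's conjecture), an infinite dimensional simple $\ZZ$-graded Lie algebra of polynomial growth is isomorphic to one of a short explicit list — a loop algebra (affine Kac--Moody type, i.e.\ $\mf{g}\otimes \kk[t,t^{-1}]$ for $\mf{g}$ finite dimensional simple), the Witt algebra $W$, or one of the Cartan-type algebras $W_n$, $S_n$, $H_n$, $K_n$ of Block/Kac. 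The key step is then to verify, case by case, that each of these contains $W_+$: for $W$ itself this is immediate; for the loop algebra $\mf{g}\otimes\kk[t,t^{-1}]$ one uses an $\mathfrak{sl}_2$-triple inside $\mf{g}$ to produce a copy of $W_+$ (the derivations $t^{n+1}\frac{d}{dt}$ do not lie in the loop algebra, but a suitable embedding via the principal $\mathfrak{sl}_2$ and multiplication by powers of $t$ yields elements satisfying the Witt relations, or alternatively one passes to a subalgebra isomorphic to the Witt algebra inside the full derivation algebra); for the Cartan-type algebras the Witt algebra $W_1 = W$ sits inside naturally. Whichever copy we find, Lemma~\ref{lem:liesubalg} then gives the result.

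The main obstacle I anticipate is part (2), specifically making precise and uniform the claim ``$L$ contains $W_+$.'' There are two sub-issues. First, one must actually cite and correctly state the classification theorem (Mathieu's theorem on graded simple Lie algebras of polynomial growth) and handle each family on its list — this is where the real content lies, and the loop-algebra case in particular requires a genuine construction of Witt-type elements rather than an obvious inclusion. Second, one should be careful about the grading: $W_+$ needs to embed as a Lie subalgebra (the grading is not essential for applying Lemma~\ref{lem:liesubalg}, which only needs subalgebra-hood), so I would not worry about matching gradings, only about exhibiting the bracket relations $[e_n,e_m]=(m-n)e_{n+m}$ for $n,m\geq 1$ among suitable elements of $L$. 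An alternative, possibly cleaner, approach for the loop-algebra case is to note that such algebras have a copy of $\mathfrak{sl}_2\otimes\kk[t]$ or similar and argue directly that $U$ of this is already non-noetherian, but tracing this back to $W_+$ via a known embedding is the most economical; I would present it by first isolating a lemma ``if the Lie algebra $L$ contains a subalgebra isomorphic to $W_+$ then $U(L)$ is not noetherian'' (immediate from Lemma~\ref{lem:liesubalg} and Theorem~\ref{thm:main}) and then verifying the hypothesis for each item in the classification.

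Finally, I would assemble the corollary: part (1) from the central-extension/central-factor argument above, part (2) from the classification plus the embedding lemma, and the ``moreover'' clause from the observation that $W_+ \subseteq V$ survives passage to every central factor $U(V)/(c-\lambda)$. The only calculations needed are the routine verification that $W_+$'s bracket never produces the central element (so the embedding $W_+\hookrightarrow V$ is honest and descends to all central factors) and the case-check in the classification; neither requires the heavy machinery of \cite{S-surfclass}, which was already expended in proving Theorem~\ref{thm:main}.
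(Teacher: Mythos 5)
Your overall reduction to Theorem~\ref{thm:main} via subalgebras matches the paper's strategy, and part (1) (kill the center, $U(V)\twoheadrightarrow U(W)$) is exactly the paper's argument. But there are two genuine gaps.

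First, in the ``moreover'' clause you invoke Lemma~\ref{lem:liesubalg} to pass from noetherianity of $U(V)/(c-\lambda)$ to noetherianity of $U(W_+)$. That lemma applies only when the ambient ring is itself a universal enveloping algebra of a Lie algebra; for $\lambda\neq 0$, $U_\lambda := U(V)/(c-\lambda)$ is not of that form, so the lemma does not literally apply. The paper instead checks via the PBW basis that $U(W_+)\cap (c-\lambda)=0$ (so the inclusion $U(W_+)\hookrightarrow U_\lambda$ is genuine), exhibits an explicit free basis for $U_\lambda$ over $U(W_+)$ to get faithful flatness, and then applies \cite[Exercise~17T]{GW} directly --- i.e.\ it re-runs the proof of Lemma~\ref{lem:liesubalg} rather than citing its statement.

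Second, and more seriously, your treatment of the loop algebra case in part (2) is wrong (or at least unestablished). It is not clear that $W_+$ embeds in $\mf g\otimes\kk[t^{\pm 1}]$: if one tries to find graded elements $g_n\otimes t^n$ satisfying $[g_n,g_m]=(m-n)g_{n+m}$, the finite-dimensionality of $\mf g$ forces repetitions among the $g_n$, which contradicts the Witt bracket being nonzero; a non-graded embedding is nowhere constructed, and the appeal to a principal $\mathfrak{sl}_2$ plus powers of $t$ is a sketch, not an argument. The paper sidesteps this entirely by observing that the loop algebra contains an infinite dimensional \emph{abelian} Lie subalgebra (e.g.\ $g\otimes\kk[t]$ for a fixed $g\in\mf g$), whose enveloping algebra is a polynomial ring in infinitely many variables, hence non-noetherian; the paper's Lemma~\ref{lem:U(L)} is formulated precisely to allow either $W_+$ or an infinite-dimensional abelian subalgebra. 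The same issue arises for the Cartan types $\mathbb{S}_n,\mathbb{H}_{2m},\mathbb{K}_{2m+1}$: the paper again produces infinite-dimensional abelian subalgebras rather than copies of $W_+$. Your blanket claim ``the Witt algebra $W_1=W$ sits inside naturally'' is also slightly off --- $\mathbb{W}_1=\kk[x]\,d/dx$ contains $W_+$ but not all of $W$ --- though that particular slip is harmless. To repair part (2) you should replace the loop-algebra embedding claim with the abelian-subalgebra argument (or else actually construct an embedding of $W_+$, which I do not believe exists).
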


Preliminary results and lemmas pertaining to the  ring $R$ and its associated geometry are provided in Sections~\ref{PRELIMINARY} and~\ref{GEOM}, respectively. We prove Theorem~\ref{thm:main} in Section~\ref{PROOF}. In Section~\ref{GKR}, we show that the Gelfand-Kirillov dimension of $R$ is 3, which is of independent interest. 
We prove Corollary~\ref{cor:main} in Section~\ref{CONSEQ}.

%%%%%%%%%%%%%%%%%%%%%%%%%%%%%%%%%%%%%%%
%%%%%%%%%%%%%%%%%%%%%%%%%%%%%%%%%%%%%%%
%%%%%%%%%%%%%%%%%%%%%%%%%%%%%%%%%%%%%%%
\section{Preliminaries}\label{PRELIMINARY}

The bulk of this paper is devoted to showing that $U(W_+)$ is not noetherian.  
In this section, we calculate explicitly the defining relations of $U(W_+)$ and construct a useful ring homomorphism $\rho$ from $U(W_+)$ to the ring $K[t; \tau]$ defined in Notation~\ref{not1}.

First, let us produce a second presentation of $U(W_+)$ as follows.

\begin{lemma} \label{lem:present}
Recall Definition~\ref{def:U(W_+)}. We have the following isomorphism:
$$U(W_+) \cong
\frac{\kk \langle e_1, e_2 \rangle}
{\left( 
\begin{array}{c}
[e_1,[e_1,[e_1,e_2]]]+6[e_2,[e_2,e_1]],\\
\left[e_1,\left[e_1,\left[e_1,\left[e_1,\left[e_1,e_2\right]\right]\right]\right]\right]
+40[e_2,[e_2,[e_2,e_1]]] 
\end{array} 
\right)}.$$
\end{lemma}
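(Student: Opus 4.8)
\textbf{Proof proposal.}
The plan is to produce a surjection in one direction, compute that both algebras have the same Hilbert series, and conclude. Write $A$ for the algebra on the right-hand side, generated by symbols $e_1, e_2$ subject to the two displayed relations, and write $U = U(W_+)$. Since $U$ is generated by $e_1$ and $e_2$ (indeed $e_3 = [e_1,e_2]$, $e_4 = \tfrac12[e_1,e_3]$, and inductively $e_{n+1}$ is a scalar multiple of $[e_1,e_n]$, so every $e_n$ lies in the subalgebra generated by $e_1,e_2$), there is a surjective algebra map $\pi\colon \kk\langle e_1,e_2\rangle \twoheadrightarrow U$. The first step is to check that the two displayed elements lie in $\ker\pi$, i.e. that they vanish when the brackets are interpreted as commutators inside $U(W_+)$. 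This is a direct computation with the relations $[e_n,e_m]=(m-n)e_{n+m}$: for instance $[e_1,[e_1,[e_1,e_2]]] = [e_1,[e_1,e_3]] = [e_1, 2e_4] = 6e_5$, while $[e_2,[e_2,e_1]] = [e_2,-e_3] = -e_5$, so the first element maps to $6e_5 - 6e_5 = 0$; the second element is checked the same way and maps to $40e_7 - 40e_7=0$. Hence $\pi$ factors through a surjection $\bar\pi\colon A \twoheadrightarrow U$.

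The second step is to bound the size of $A$ from above so that it cannot be strictly larger than $U$. Grade $\kk\langle e_1,e_2\rangle$ by $\deg e_1 = 1$, $\deg e_2 = 2$; both relations are homogeneous (of degrees $5$ and $7$ respectively), so $A$ is $\NN$-graded and $\bar\pi$ is a graded map. By the PBW theorem, $U(W_+)$ has Hilbert series $\prod_{n\ge 1}(1-s^n)^{-1}$, the generating function for partitions. So it suffices to show $\dim_\kk A_d \le p(d)$ for all $d$, where $p(d)$ is the number of partitions of $d$; combined with the surjection $\bar\pi$, this forces $\dim_\kk A_d = p(d)$ and hence $\bar\pi$ is an isomorphism. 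To get the upper bound I would exhibit a spanning set of $A$ indexed by partitions — concretely, show that the images of ordered monomials $e_1^{a_1} e_2^{a_2} e_3^{a_3}\cdots$ (where $e_n$ for $n\ge 3$ is defined as the appropriate iterated bracket in $e_1,e_2$) span $A$, by using the two defining relations to rewrite any word in $e_1,e_2$ into this ordered form. The key point that makes this work is that the two given relations are exactly enough to recover, inside $A$, all the Witt relations $[e_n,e_m]=(m-n)e_{n+m}$: one shows by induction that $[e_1,e_n] = (n-1)e_{n+1}$ holds in $A$ for the chosen lifts $e_n$, and then that the remaining brackets $[e_i,e_j]$ reduce correctly — the first relation handles the "$e_2$ versus $e_3$" compatibility and the second pins down the one extra degree of freedom that a single relation would leave.

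The main obstacle is precisely this last verification: showing that the two relations suffice to force all of the Lie relations of $W_+$ (equivalently, that $W_+$ as a Lie algebra is presented by $e_1,e_2$ with those two bracket relations, after which applying $U(-)$ and PBW is formal). A clean way to organize it is to pass to the Lie-algebra level: let $\mf{g}$ be the free Lie algebra on $e_1,e_2$ modulo the Lie ideal generated by the two displayed elements, show $\dim \mf{g}_d \le 1$ for each graded piece $d$ with respect to the weighting $\deg e_1=1,\deg e_2=2$ by a bracket-rewriting argument, note the evident surjection $\mf{g}\twoheadrightarrow W_+$ which is an isomorphism in each (at most one-dimensional) graded piece, and then invoke the fact that $U$ of a presented Lie algebra is the associative algebra with the same presentation, together with PBW, to conclude. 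The combinatorial heart — that two relations, one in degree $5$ and one in degree $7$, cut the free object down to one dimension per degree — is where all the work is, and I expect it to be the delicate step; everything else (the surjection, homogeneity, PBW bookkeeping) is routine.
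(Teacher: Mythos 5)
Your surjection-plus-dimension-count framing has the right shape, and the verification that the two displayed elements vanish in $U(W_+)$ matches the paper's computation (modulo a small arithmetic slip: the degree-$7$ element maps to $120e_7 - 120e_7$, not $40e_7 - 40e_7$). But there is a genuine gap exactly where you flag it: the claim that the two relations, one of weighted degree $5$ and one of degree $7$, cut the free Lie algebra on $e_1,e_2$ down to at most one dimension in every degree is not established, and it is not routine --- it is the entire mathematical content of the lemma. Your sketch (``define $e_{n+1}$ as a multiple of $[e_1,e_n]$ and check that the remaining brackets reduce correctly'') defers precisely the work that must be done: one has to show that every iterated bracket in $e_1,e_2$ of a given weight collapses to a scalar multiple of a single element, which means systematically resolving all the Jacobi ambiguities using only the two given relations, for instance by exhibiting a Lie Gr\"obner--Shirshov basis or running a diamond-lemma argument. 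Nothing in the proposal actually does this, and asserting that the first relation ``handles the $e_2$ versus $e_3$ compatibility'' and the second ``pins down the one extra degree of freedom'' is a description of the desired conclusion, not an argument for it.

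The paper avoids this work by a different and much shorter route: it cites a result of Ufnarovskij for the structural fact that $W_+$ admits a minimal Lie presentation on $e_1,e_2$ with exactly one relation in degree $5$ and one in degree $7$. Granting that, it is enough to exhibit some nonzero degree-$5$ relation $r_5$ and some degree-$7$ relation $r_7$ not lying in the ideal generated by $r_5$; minimality then forces $(r_5,r_7)$ to be the whole relations ideal, and passing from the Lie presentation to the associative one via $U(-)$ is formal. Both remaining checks (that the displayed elements are relations, and that the degree-$7$ one is independent of the degree-$5$ one) are finite computations. To repair your proof you should either carry out the rewriting argument in full --- a substantial project, essentially reproving the cited presentation theorem --- or, more economically, invoke that known result as the paper does and reduce to the two small computations.
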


\begin{proof}
The Lie algebra $W_+$ is generated by $e_1$ and $e_2$. 
The proof of  \cite[Theorem~8.3.1]{Ufnar} shows that $W_+$ has one relation in degree 5 and one in degree 7.  
Thus $U(W_+)$ is generated by $e_1$ and $e_2$ and has one relation in degree 5 and one in degree 7.  

Using the relation $[e_n, e_m] = (m-n) e_{n+m}$, consider the following computations:
\[
[e_1,[e_1,[e_1,e_2]]] = [e_1,[e_1,e_3]]=2[e_1,e_4]=6 e_5,\]
\[ [e_2,[e_2,e_1]] = -[e_2,e_3] = -e_5,\]
\[ [e_1,[e_1,[e_1,[e_1,[e_1,e_2]]]]] = 6 [e_1,[e_1, e_5]] = 24[e_1, e_6] = 120 e_7,\]
\[ [e_2,[e_2,[e_2,e_1]]] = -[e_2,e_5] = -3 e_7.\]
Thus, we have the following two equations:
\beq\label{rel5}
e_1^3 e_2 - 3e_1^2e_2 e_1 + 3 e_1 e_2 e_1^2 -e_2 e_1^3 + 6(e_2^2 e_1 -2 e_2 e_1 e_2 + e_1 e_2^2) =0;
\eeq
\beq \label{rel7}
e_1^5 e_2 - 5 e_1^4e_2 e_1 + 10 e_1^3e_2 e_1^2-10 e_1^2 e_2 e_1^3 + 5 e_1 e_2 e_1^4 - e_2 e_1^5 + 40(e_2^3 e_1-3 e_2^2 e_1 e_2 +3 e_2e_1 e_2^2 -e_1 e_2^3)=0.
\eeq
A routine computation verifies that  the left-hand side of 
 \eqref{rel7} does not lie in the two sided ideal generated by the left-hand 
 side of \eqref{rel5}.  
Thus, these are the degree 5 and degree 7 relations that we seek.
\end{proof}

We will use geometric arguments to analyze $U(W_+)$. Let us establish some notation.  
\begin{notation} \label{not1} [$X$, $\tau$, $f$, $f_i$, $P$, $\kk(X)[t; \tau]$]
We denote $\PP^n_\kk$ simply by $\PP^n$.
Let coordinates on $\PP^3$ be $w,x,y,z$.  Let $X= V(xz-y^2) \subset \PP^3$ be the projective cone over $\PP^1$; this is a rational surface whose singular locus is the vertex  $P = [1:0:0:0]$.    Define an automorphism $\tau$ of $ X$ by
\[ \tau([w:x:y:z]) = [w-2x+2z:z:-y-2z:x+4y+4z].\]
Note that on $X$ we have $z(x+4y+4z)-(-y-2z)^2= xz-y^2=0$,
so $\tau$ is well-defined.
Since the matrix defining $\tau$ is invertible, $\tau$ is an automorphism.  
The automorphism $\tau$ acts on $\kk(X)$ by pullback; by abuse of notation, we denote this pullback action by $\tau$ as well, so that $g^\tau = \tau^* g = g \circ \tau$ for $g \in \kk(X)$.  
We will work in the ring $\kk(X)[t; \tau]$, where $tg = g^\tau t$ for all $g \in \kk(X)$. 

Let 
\[ f= \frac{w+12x+22y+8z}{12x+6y},\]
considered as a rational function in $\kk(X)$; equivalently,  $f: X \dashrightarrow \mathbb{P}^1$ is a rational map.  
For $i \in \ZZ$, let $f_i$ denote $f^{\tau^i} = f \circ \tau^i$.
\end{notation}

In the next result, we construct a homomorphism $\rho$ from $U(W_+)$ to $\kk(X)[t; \tau]$.  

\begin{proposition}\label{prop:rho}
There is a graded algebra homomorphism 
$\rho: U(W_+) \to \kk(X)[t; \tau] $
defined by
$\rho(e_1) = t$ and $\rho(e_2) = ft^2.$
\end{proposition}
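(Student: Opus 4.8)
The plan is to verify directly that the two defining relations of $U(W_+)$ from Lemma~\ref{lem:present} — or, equivalently, the degree-5 relation \eqref{rel5} and the degree-7 relation \eqref{rel7} — are sent to zero under the assignment $e_1 \mapsto t$, $e_2 \mapsto ft^2$. Since $\kk\langle e_1, e_2\rangle$ is free, any such assignment extends to an algebra homomorphism from the free algebra to $\kk(X)[t;\tau]$, and it descends to $U(W_+)$ precisely when the images of the two relators vanish. Grading is automatic: $t$ has degree $1$ and $ft^2$ has degree $2$ in the grading on $\kk(X)[t;\tau]$ where $\deg t = 1$ and $\deg \kk(X) = 0$, matching $\deg e_1 = 1$, $\deg e_2 = 2$.

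First I would record the basic computation rule in $\kk(X)[t;\tau]$: for $g \in \kk(X)$ one has $t\, g = g^\tau t$, hence $t^k g = g^{\tau^k} t^k$, and more generally a monomial in $t$ and $ft^2$ collapses to (product of suitable $\tau$-shifts of $f$) times a power of $t$. Concretely, it is cleanest to compute the image of iterated brackets rather than the expanded relators. Using $\rho(e_1) = t$ and $\rho(e_2) = ft^2$, I would show by a short induction that $\rho\big(\mathrm{ad}(e_1)^j(e_2)\big) = c_j\, g_j\, t^{j+2}$ for suitable scalars $c_j$ and rational functions $g_j$ built from $f_0, f_1, \dots, f_j$ (the notation $f_i = f^{\tau^i}$ from Notation~\ref{not1} is designed exactly for this). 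In fact $\mathrm{ad}(t)(ht^k) = (h - h^{\tau})t^{\,k+1}\cdot(\text{sign/shift bookkeeping})$, so each application of $\mathrm{ad}(e_1)$ produces a "discrete derivative" $h \mapsto h^{\tau^{k}} - h$ up to a shift; iterating gives finite-difference expressions in the $f_i$. Then $\rho([e_1,[e_1,[e_1,e_2]]])$ and $\rho([e_2,[e_2,e_1]])$ are each explicit rational multiples of $t^5$, and \eqref{rel5} becomes a single identity in $\kk(X)$ asserting that a specific $\kk$-linear combination of products of $f_i$'s vanishes; similarly \eqref{rel7} becomes an identity among the $f_i$'s times $t^7$.

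The real content is therefore checking two rational-function identities on the quadric cone $X$. The function $f = \frac{w+12x+22y+8z}{12x+6y}$ and the automorphism $\tau$ have been reverse-engineered so that these identities hold, so the verification is a finite computation: substitute the linear forms defining $\tau$, clear denominators, and reduce modulo the ideal $(xz - y^2)$ of $X$. I expect the main obstacle to be purely computational bookkeeping — correctly tracking the $\tau$-shifts $f_i$ and the combinatorial coefficients ($6$, $40$, and the binomial coefficients in the expanded relators) — rather than anything conceptual; there is no genuine difficulty once the finite-difference structure of $\mathrm{ad}(t)$ is set up. I would present the bracket computations, state the resulting two polynomial identities in $w,x,y,z$, and note that each is verified by a direct calculation modulo $xz-y^2$ (a computation one can also confirm by machine).

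Once both relators map to zero, the induced map $\rho: U(W_+) \to \kk(X)[t;\tau]$ is a well-defined graded algebra homomorphism with $\rho(e_1) = t$, $\rho(e_2) = ft^2$, as claimed.
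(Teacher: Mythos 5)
Your proposal is correct and is essentially the paper's own argument: reduce via Lemma~\ref{lem:present} to checking that $t$ and $ft^2$ annihilate the two relators \eqref{rel5} and \eqref{rel7}, rewrite the images as $\kk$-linear combinations of products of shifts $f_i$ times a fixed power of $t$, and verify the resulting rational-function identities on $X$ by direct (machine) computation. The only cosmetic difference is that you organize the calculation around iterated $\mathrm{ad}(e_1)$ acting as a finite difference, whereas the paper expands the relators into monomials $e_1^{j_0}e_2e_1^{j_1}\cdots$ and maps each monomial to $f_{i_1}\cdots f_{i_k}t^{i_1+\cdots}$; both routes land on the same two polynomial identities modulo $xz-y^2$.
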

\begin{proof}
By Lemma~\ref{lem:present}, we must show that $t$ and $ft^2$ satisfy the equations \eqref{rel5}, \eqref{rel7}.     
We first observe that $\rho$ maps the monomial $e_1^i e_2 e_1^j$ to $t^i ft^{j+2} = f_it^{i+j+2}$.  Similarly, we have
\[ 
\begin{array}{lll} 
e_2^2 e_1 \mapsto f_0 f_2 t^5, &\quad e_2 e_1 e_2 \mapsto f_0 f_3 t^5, &\quad e_1 e_2^2 \mapsto f_1f_3 t^5,\\
e_2^3 e_1 \mapsto f_0 f_2 f_4 t^7, &\quad e_2^2 e_1 e_2 \mapsto f_0 f_2 f_5 t^7, &\quad e_2 e_1 e_2^2 \mapsto f_0 f_3 f_5 t^7, \quad e_1 e_2^3 \mapsto f_1f_3 f_5 t^7.
\end{array}
\]
To verify the relations \eqref{rel5}, \eqref{rel7}, we must therefore check that the following equations hold:
\[
\begin{array}{l}
 f_3 -3 f_2 + 3f_1-f_0 + 6(f_0 f_2 - 2f_0f_3+f_1f_3)=0;\\
f_5-5f_4+10f_3-10f_2+5f_1-f_0+40(f_0f_2f_4 - 3 f_0 f_2 f_5 + 3 f_0 f_3f_5-f_1f_3f_5)=0.
\end{array}
\]
This is a straightforward computation, although it is best done by computer.  
See Routine~A.1 in the appendix for the Macaulay2 calculations.
\end{proof}

Consider the following notation.

\begin{notation} \label{not:R} [$R$] Let $R$ denote the image of $U(W_+)$ under the map $\rho$ of Proposition~\ref{prop:rho}.    
\end{notation}

We will show that $U(W_+)$ is not noetherian by showing that $R$ is not noetherian.

To end the section, we give two useful  technical results.% that will allow us to make a reduction in the proof of Theorem~\ref{thm:main}.

\begin{lemma}\label{lem:liesubalg}
Let $L$ be a Lie algebra, and let $L'$ be a Lie subalgebra.  If $U(L)$ is noetherian, then $U(L')$ is also noetherian.
\end{lemma}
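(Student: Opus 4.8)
The plan is to exploit the Poincar\'e--Birkhoff--Witt (PBW) theorem to realize $U(L)$ as a free---hence faithfully flat---module over the subalgebra $U(L')$ on \emph{both} sides, and then to apply the standard fact that a subring over which a (one-sided) noetherian ring is faithfully flat inherits the (one-sided) noetherian property.

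First I would record the relevant consequence of PBW. Choose an ordered $\kk$-basis $\{x_i\}_{i\in I'}$ of $L'$ and extend it to an ordered basis $\{x_i\}_{i\in I}$ of $L$ with $I'\subseteq I$, ordering the index set so that every element of $I\ssm I'$ precedes every element of $I'$. By PBW, $U(L)$ has $\kk$-basis the ordered monomials $x_{j_1}\cdots x_{j_k}$ with $j_1\le\cdots\le j_k$; each such monomial factors uniquely as a monomial supported on $I\ssm I'$ followed by a monomial supported on $I'$, and the latter monomials are precisely a $\kk$-basis of $U(L')$. Hence $U(L)$ is a free right $U(L')$-module on the ordered monomials supported on $I\ssm I'$ (one of which is $1$). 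Ordering $I'$ first instead exhibits $U(L)$ as a free left $U(L')$-module. In particular the inclusion $U(L')\hra U(L)$ is faithfully flat on both sides.

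Next I would invoke (or quickly prove) the descent step: if $T\subseteq S$ is a ring extension with $S$ faithfully flat as a left $T$-module and $S$ right noetherian, then $T$ is right noetherian. Given an ascending chain $I_1\subseteq I_2\subseteq\cdots$ of right ideals of $T$, flatness of ${}_T S$ identifies $I_n\otimes_T S$ with the right ideal $I_nS\subseteq S$, so $\{I_nS\}$ stabilizes; and if $I_nS=I_{n+1}S$, then tensoring $0\to I_n\to I_{n+1}\to I_{n+1}/I_n\to 0$ with ${}_T S$ gives $(I_{n+1}/I_n)\otimes_T S=I_{n+1}S/I_nS=0$, whence $I_{n+1}/I_n=0$ by faithful flatness. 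Applying this with $S=U(L)$, $T=U(L')$, together with its left--right mirror, yields that $U(L')$ is right (resp.\ left) noetherian whenever $U(L)$ is. (This is essentially the content of \cite[\S1.7]{MR}.)

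I expect no serious obstacle: the argument is an assembly of standard facts. The only points requiring care are organizing the PBW basis so that two-sided freeness is manifest, and keeping the flatness sides straight---flatness of $S$ as a \emph{left} $T$-module controls \emph{right} ideals of $T$, and vice versa---so that both one-sided conclusions transfer. Alternatively, one may simply cite the freeness of $U(L)$ over $U(L')$ (e.g.\ from Dixmier's book on enveloping algebras) and the faithfully-flat descent statement from a standard reference, which shortens the proof to a few lines.
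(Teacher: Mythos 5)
Your proof is correct and follows essentially the same route as the paper: use PBW to exhibit $U(L)$ as a free (hence faithfully flat) module over $U(L')$, then descend the noetherian property along the faithfully flat extension. The only cosmetic difference is that the paper reduces to a one-sided claim by invoking the fact that enveloping algebras are isomorphic to their opposites (Dixmier, Proposition~2.2.17) and then proves only right-freeness, whereas you obtain freeness on both sides directly by choosing two different orderings of the basis.
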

\begin{proof} 
Any enveloping algebra is isomorphic to its opposite ring, by \cite[Proposition~2.2.17]{Dixmier}.  
Thus, the noetherian property for enveloping algebras is left-right symmetric, and it suffices to show that if $U(L)$ is  noetherian, then $U(L')$ is left noetherian.  
Let $\{e_i\}_{i \in I'}$ be a basis of $L'$, where $I'$ is some set of indices, and extend to a basis $\{e_i\}_{i \in I}$ for $L$, where $I \supseteq I'$.  
Fix an ordering on $I$ so that if $i \in I'$ and $j \not\in I'$, then $j < i$.  
It follows from  the Poincar\'e-Birkoff-Witt theorem that  $U(L)$ has a basis $\{e_{k_1}^{\alpha_{1}}e_{k_2}^{\alpha_{2}} \cdots e_{k_m}^{\alpha_{m}}\}$ for $\alpha_i \in \mathbb{Z}_{\geq 1}$ and $k_i \in I$ with $k_1 < \dots < k_m$, and similarly for $U(L')$. Thus, it is clear that $U(L)$ is a free right $U(L')$-module with basis $\{ e_{j_1}^{\beta_{1}}e_{j_2}^{\beta_{2}} \cdots e_{j_r}^{\beta_{r}} |\ \beta_i \in \mathbb{Z}_{\geq 1}, j_1 < \dots < j_r \in I \ssm I'\} $ over $U(L')$. This implies that $U(L)$ is right faithfully flat over $U(L')$. Now by \cite[Exercise~17T]{GW},  if $U(L)$ is left noetherian, then $U(L')$ is also left noetherian. 
\end{proof}

\begin{lemma}\label{lem:AZ} 
\cite[Proposition~5.10(1)]{AZ}
If $S = \bigoplus_{n \in \NN} S_n$ is a  right (left) noetherian $\NN$-graded $\kk$-algebra, and $N$ is a positive integer, then the Veronese subalgebra $S^{(N)} = \bigoplus_{n \in \NN} S_{Nn}$ is right (left) noetherian. \qed
\end{lemma}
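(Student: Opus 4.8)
The plan is to prove directly that every right ideal of $T := S^{(N)}$ is finitely generated, exploiting the noetherianity of the larger ring $S$ together with the way $S$ decomposes as a $T$-bimodule. For $0 \le i < N$ set $S^{(N,i)} = \bigoplus_{n \ge 0} S_{Nn+i}$; counting degrees modulo $N$ shows each $S^{(N,i)}$ is a sub-$T$-bimodule of $S$, and $S = \bigoplus_{i=0}^{N-1} S^{(N,i)}$ with $S^{(N,0)} = T$. Let $\pi \colon S \to T$ be the projection onto the $i=0$ summand. Then $\pi$ is a homomorphism of $T$-bimodules and restricts to the identity on $T$; this map is the device that makes the whole argument work, since it lets one recover a right ideal of $T$ from the right ideal of $S$ that it generates.

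Given a right ideal $I \subseteq T$, I would pass to the right ideal $IS = \sum_{a \in I} aS$ of $S$. Since $S$ is right noetherian, $IS$ is finitely generated over $S$, and as $IS$ is the directed union of the ideals $\sum_{a \in F} aS$ over finite subsets $F \subseteq I$, one may take generators $a_1, \dots, a_m$ lying in $I$ itself. Now let $x \in I$. Then $x \in IS$, so $x = \sum_{k=1}^m a_k s_k$ for some $s_k \in S$; applying $\pi$ and using that each $a_k \in T$, that $\pi$ is left $T$-linear, and that $\pi(x) = x$, we obtain $x = \pi(x) = \sum_{k=1}^m a_k\,\pi(s_k) \in \sum_{k=1}^m a_k T \subseteq I$. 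Hence $I = a_1 T + \cdots + a_m T$ is finitely generated, so $T$ is right noetherian. The left-noetherian statement follows by the symmetric argument (using that $\pi$ is also right $T$-linear), or by applying the right-handed case to the opposite ring $S^{\mathrm{op}}$, whose $N$-th Veronese is $(S^{(N)})^{\mathrm{op}}$.

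The only delicate point is this passage back from $IS$ to $I$: a priori the $S$-module generators of $IS$ need not lie in $T$, and one might worry that forming $IS$ loses information. Both concerns are dissolved by $\pi$, which exists precisely because the Veronese decomposition splits $S$ over $T$ on both sides; in fact the same computation shows $I = IS \cap T$. An alternative route would be to reduce first to graded right ideals and argue with the $\NN$-grading, but that needs the separate and less elementary fact that an $\NN$-graded ring is noetherian as soon as its graded right ideals satisfy the ascending chain condition, so I would prefer the bimodule-projection argument. Beyond this bookkeeping I do not anticipate any serious obstacle.
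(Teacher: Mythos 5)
Your proof is correct. The paper does not actually prove this lemma --- it simply cites \cite[Proposition~5.10(1)]{AZ} and writes \verb|\qed| --- so there is no internal argument to compare against. Your bimodule-retraction argument is a clean, self-contained proof of the cited fact: the observation that $T=S^{(N)}$ is a $T$-bimodule direct summand of $S$, with the complementary summands $S^{(N,i)}$ cut out by congruence class of degree modulo $N$, furnishes a $T$-bilinear splitting $\pi\colon S\to T$ with $\pi|_T=\mathrm{id}$, and the passage $I\mapsto IS\mapsto \pi(IS)=I$ correctly transports the ascending chain condition from $S$ down to $T$. You also handle the two genuine subtleties properly: that a finite set of $S$-module generators for $IS$ may be chosen inside $I$ itself (the directed-union observation), and that the left-handed statement follows either by the symmetric use of right $T$-linearity of $\pi$ or by passing to $S^{\mathrm{op}}$. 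This is essentially the standard ``Reynolds-operator'' proof that a subring which is a bimodule direct summand inherits noetherianity; it has the added virtue of not requiring $S$ to be a finite module over $S^{(N)}$, which is not automatic at the level of generality in the statement and would need extra hypotheses (e.g.\ $S$ connected graded and generated in bounded degree).
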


%%%%%%%%%%%%%%%%%%%%%%%%%%%%%%%%%%%%%%%
%%%%%%%%%%%%%%%%%%%%%%%%%%%%%%%%%%%%%%%
%%%%%%%%%%%%%%%%%%%%%%%%%%%%%%%%%%%%%%%

\section{Geometry on $X$}\label{GEOM}

In this section, we give some geometric results about $X$, about the automorphism $\tau$ (from Notation~\ref{not1}) and about certain sheaves on $X$. We will use the following notation throughout.

\begin{notation} \label{not:tildeX} [$\pi$, $\sigma$]
Consider the  rational map $\pi:  X \dra \PP^1$ defined by
\[ [w:x:y:z] \mapsto \begin{cases} [x:y] & \text{if  $x \neq 0$} \\ [y:z] & \text{if  $z \neq 0$.} \end{cases}\]
If both $x$ and $z$ are nonzero, then $y^2 = xz$ is nonzero, and we have that $[x:y] = [xz:yz]=[y^2:yz] = [y:z]$. So, $\pi$ is well-defined. Also, $x=z=0$ intersects $X$ at $P=[1:0:0:0]$, so the domain of definition of $\pi$ is $X \ssm P$. Let $\sigma:\PP^1_{[u:v]} \to \PP^1_{[u:v]}$ be given by $[u:v] \mapsto [v:-u-2v]$. Since rational maps between irreducible projective varieties are equal if they agree on an open set,  we have that $\pi \tau = \sigma \pi$. (Here, we have equality on the open set: $z \neq 0$.) 
\end{notation}

\begin{notation}\label{not:L_i,D}[$p_i$, $L_i$, $D$] 
For $i \in \ZZ$, let  $p_i = \sigma^{-i}([1:-2])$. Note that $\sigma^{-1}([a:b]) = [-b-2a:a]$. Let $L_i =  \overline{\pi^{-1}(p_i)}$. 
 Each $L_i$ is a line through the vertex $P$ of $X$; for example, $L_0 = V(2x+y,2y+z)\subset X$, and  $L_1 = V(x,y)$. 
We also have $\tau^{-1}(L_i) = L_{i+1}$.

   As is well-known, the lines $L_i$ are Weil divisors on $X$    but are not locally principal at $P$
 (note that $X$ is normal, so it makes sense to talk about Weil divisors).
The divisor class group of the local ring $\sO_{X,P}$ is $\ZZ/2\ZZ$, so any sum $L_i+L_j$ is locally principal.
     (See \cite[Examples~II.6.5.2 and II.6.11.3]{Hartshorne}.)

Let $D$ denote the divisor $V(w+12x+22y+8z)\cap X$ on $X$.
\end{notation}

We will need to consider the locally principal Weil divisors $\Div(g)$ for rational functions $g \in \kk(X)$.  
We begin by computing $\Div (f)$ for $f$ in Notation~\ref{not1}.  
Note that $V(2x+y) \cap X$ is a degree 2 curve in $\PP^3$ that contains $L_0$ and $L_1$.  
Thus, $V(2x+y)\cap X = L_0 \cup L_1$ and we have proven the following result.

\begin{lemma} \label{lem:div(f)} Recall Notations~\ref{not1} and~\ref{not:L_i,D}. We have that  $\Div (f) = D - L_0 - L_1$.  As a consequence, $\Div(f_i) = \tau^{-i}(D) - L_i - L_{i+1}$. 
 \qed
\end{lemma}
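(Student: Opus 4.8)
The plan is to compute $\Div(f)$ directly from the definition $f = (w+12x+22y+8z)/(12x+6y)$ by identifying the zero locus and the pole locus of $f$ as Weil divisors on the normal surface $X$, then checking that the resulting divisor is locally principal and has degree $0$. Since $f$ is a ratio of two linear forms, its divisor is $\Div(f) = \bigl(V(w+12x+22y+8z)\cap X\bigr) - \bigl(V(12x+6y)\cap X\bigr)$ as cycles, provided no common component cancels; the numerator hyperplane and the denominator hyperplane are distinct, and since $X$ is irreducible they cut out curves with no shared component, so no cancellation occurs. The numerator cuts out exactly the divisor $D$ of Notation~\ref{not:L_i,D}, essentially by definition of $D$. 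For the denominator, $12x+6y = 6(2x+y)$, so $V(12x+6y)\cap X = V(2x+y)\cap X$; by the remark preceding the lemma, $V(2x+y)\cap X$ is a degree-$2$ curve in $\PP^3$ containing both lines $L_0 = V(2x+y,2y+z)$ and $L_1 = V(x,y)$, and since $L_0\cup L_1$ is already a degree-$2$ curve inside this intersection, equality $V(2x+y)\cap X = L_0\cup L_1$ forces the pole divisor to be $L_0 + L_1$ (with multiplicity one on each, as each $L_i$ appears reduced). This yields $\Div(f) = D - L_0 - L_1$.

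For the consistency check one should verify that $D - L_0 - L_1$ is a principal divisor: $D$ and $L_0 + L_1$ are each locally principal (for $L_0+L_1$ this is exactly the observation in Notation~\ref{not:L_i,D} that any $L_i + L_j$ is locally principal at $P$ since $\Cl(\sO_{X,P}) = \ZZ/2\ZZ$, and away from $P$ the surface is smooth so every Weil divisor is locally principal), and both have degree $2$ with respect to the hyperplane class, so their difference is a degree-$0$ locally principal divisor — consistent with being $\Div$ of a rational function. The statement about $\Div(f_i)$ is then immediate from functoriality of $\Div$ under pullback: since $f_i = f^{\tau^i} = f\circ\tau^i = (\tau^i)^* f$, we have $\Div(f_i) = (\tau^i)^*\Div(f) = (\tau^i)^{-1}(\Div(f)) = \tau^{-i}(D) - \tau^{-i}(L_0) - \tau^{-i}(L_1)$, and by the relation $\tau^{-1}(L_j) = L_{j+1}$ from Notation~\ref{not:L_i,D} (iterated $i$ times, so $\tau^{-i}(L_j) = L_{j+i}$), this becomes $\tau^{-i}(D) - L_i - L_{i+1}$.

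The main obstacle — though it is not a deep one — is the multiplicity bookkeeping at the singular vertex $P$: one must be sure that the lines $L_0$ and $L_1$ appear with multiplicity exactly $1$ in the pole divisor of $f$, rather than some higher multiplicity coming from the singularity, and symmetrically that $D$ contains no component through $P$ with unexpected multiplicity. This is handled by the degree count: $V(2x+y)\cap X$ has total degree $2$ as a curve in $\PP^3$, and $L_0 + L_1$ already accounts for degree $2$, so there is no room for higher multiplicities or extra components; the same degree-$2$ bound pins down $D$. Once this is in place the rest is formal, and indeed the authors present the lemma essentially as an immediate consequence of the degree-$2$ curve identification stated just before it.
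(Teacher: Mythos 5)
Your proposal is correct and follows essentially the same route as the paper: identify the numerator's divisor as $D$ by definition, observe that $12x+6y$ is a scalar multiple of $2x+y$ so the denominator cuts out the degree-$2$ curve $V(2x+y)\cap X = L_0\cup L_1$ with each line appearing with multiplicity one by degree count, and then pull back along $\tau^i$ for the second assertion. The extra consistency check (that $D - L_0 - L_1$ is a degree-$0$ locally principal divisor) is a harmless addition the paper omits, and the rest matches the paper's one-sentence argument.
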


As is standard, we identify locally principal Weil divisors on $X$ with Cartier divisors; cf. \cite[Remark~II.6.11.2]{Hartshorne}. 
By \cite[Proposition~II.6.13]{Hartshorne}, for any scheme $V$ there is a natural bijection between  Cartier divisors  and  invertible subsheaves of the sheaf $\sK_V$ of total rings of quotients  of $V$.  
Applied to $X$, this bijection pairs  a locally principal Weil divisor $Z$ with the invertible sheaf $\sO_X(Z)$.
The sheaf $\sO_X(Z)$ is defined as follows.
Let $\{U_j\}$ be an open affine cover of $X$ so that each $Z \cap U_j$ is principal, defined by some $z_j \in \sK_X(U_j)$.
Then $\sO_X(Z)(U_j) = z_j^{-1} \sO_X(U_j)$.  
That is, an element of $\sO_X(Z)(U_j)$ is a rational function that has poles no worse than $Z $ on $U_j$.
A global section of $\sO_X(Z)$ is a rational function $g$ so that for each $U_j$, we have
$g = a_j z_j^{-1}$ for some $a_j \in \sO_X(U_j)$.
Equivalently, we have $g z_j \in \sO_X(U_j)$ for all $j$, or that $\Div(g) + Z$ is effective.  Recall that we write this as $\Div(g) + Z \geq 0$.

\begin{notation}\label{notB} [$\mathbb{L}_n$, $\sL_n$, $B(X, \mathcal{L}, \tau^2)$]
Let $\sL \cong \sO(1)|_X$ be the invertible sheaf $\sO_X(L_0+L_1)$, which we identify with a subsheaf of $\sK_X$ as above.
For any $n\in\NN$, let $\LL_{n} = L_0 + L_1 + \dots + L_{2n-1}$.  Note that this is locally principal.  
For $n \geq 1$, let 
\[\sL_n = \sO_X(\LL_n) = \sL \otimes_X (\tau^2)^* \sL \otimes_X \dots \otimes_X (\tau^{(2n-2)})^* \sL.\]
The graded vector space  
$\bigoplus_{n \in \NN} H^0(X, \sL_n)$
has a natural multiplication, induced from the maps $\sL_n \otimes_X (\tau^{2n})^* \sL_m \cong  \sL_{n+m}$ and the maps
\[ \xymatrix{
H^0(X, \sL_n )\otimes H^0(X, \sL_m) \ar[rr]^{1 \otimes (\tau^{2n})^*} && H^0(X, \sL_n) \otimes H^0(X, (\tau^{2n})^* \sL_m )
\ar[r] & H^0(X, \sL_n \otimes_X (\tau^{2n})^* \sL_m).}
\]
The resulting graded algebra is the {\em twisted homogeneous coordinate ring} $B = B(X, \sL, \tau^2)$ \cite{AV}.
Using an indeterminate $t$ to keep track of the graded pieces of $B$, we can write $B$ as
\[ B= B(X, \sL, \tau^2) = \bigoplus_{n \in \NN} H^0(X, \sL_n) \cdot t^{2n}.\] 
Thus, the inclusions  $\sL_n \subset \sK_X$ induce a natural inclusion $B \subset \kk(X)[t^2; \tau^2]$.
\end{notation}

Our next result is that $R^{(2)}$, the second Veronese  of $R$, is contained in $B$.  

\begin{notation} \label{not:Vn} [$V_n$]
For each $n \in \NN$, let $V_n := R_n t^{-n} \subset \kk(X)$.  
\end{notation}

\begin{lemma}\label{lem:subalg}
  Let $B=B(X, \sL, \tau^2)$, considered as a subalgebra of $\kk(X)[t^2; \tau^2] $ as above.
Then $R^{(2)}  \subseteq   B$.   That is, $V_{2n} \subseteq H^0(X, \sL_n)$ for all $n \in \NN$.  
\end{lemma}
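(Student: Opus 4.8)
The plan is to show $V_{2n} \subseteq H^0(X,\sL_n)$ by reducing, via a multiplicativity argument, to a statement about the generators of $R$ in low degrees, and then checking those low-degree cases by hand using the explicit description of $\sL_n$ as ``rational functions with poles no worse than $\LL_n = L_0 + \dots + L_{2n-1}$.'' Recall that $R$ is generated in degrees $1$ and $2$ by $t$ and $ft^2$, so $R_n$ is spanned by products $\rho(e_{i_1})\cdots\rho(e_{i_k})$ with $\sum i_j = n$ and each $i_j \in \{1,2\}$; applying $\rho$ as computed in the proof of Proposition~\ref{prop:rho}, each such product has the form $g\, t^n$ where $g$ is a product of the functions $f_i = f^{\tau^i}$ for various $i$. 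Hence $V_n = R_n t^{-n}$ is spanned by products of the $f_i$'s, and to prove the lemma for all even $n$ it suffices to show that every such product lying in an \emph{even} degree has divisor of poles supported on and bounded by $\LL_n$.

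First I would make the bookkeeping precise. By Lemma~\ref{lem:div(f)} we have $\Div(f_i) = \tau^{-i}(D) - L_i - L_{i+1}$, so $f_i$ contributes a simple pole along $L_i$ and along $L_{i+1}$ and is regular elsewhere (its zeros lie on $\tau^{-i}(D)$, which we must check meets none of the $L_j$ — this follows since $D = V(w + 12x + 22y + 8z)\cap X$ does not pass through $P$ and is not one of the $L_j$, and $\tau$ permutes the $L_j$). A degree-$n$ monomial in $R$ corresponds to a composition of $n$ into parts $1$ and $2$; a part equal to $1$ starting at ``position'' $i$ contributes nothing to the pole divisor (it is just a factor of $t$), while a part equal to $2$ at position $i$ contributes the factor $f_i$, hence poles along $L_i + L_{i+1}$. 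Summing over the parts, the total pole divisor of the function-part $g$ of any degree-$n$ monomial is $\sum_{i \in S} (L_i + L_{i+1})$ for some subset $S \subseteq \{0,1,\dots,n-2\}$, and this is always dominated by $L_0 + L_1 + \dots + L_{n-1}$. For $n = 2m$ this is exactly $\LL_m$, so $g z_j \in \sO_X(U_j)$ for the local equations $z_j$ of $\LL_m$, i.e. $g \in H^0(X,\sL_m)$. Since the $V_{2m}$ are spanned by such $g$ and $H^0(X,\sL_m)$ is a linear subspace of $\kk(X)$, we get $V_{2m} \subseteq H^0(X,\sL_m)$, which is the claim; equivalently $R^{(2)} = \bigoplus_m R_{2m} = \bigoplus_m V_{2m} t^{2m} \subseteq \bigoplus_m H^0(X,\sL_m) t^{2m} = B$.

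The main subtlety — and the step I would be most careful about — is not the pole count itself but making sure that the simple poles of the individual factors $f_i$ do \emph{not} cancel against zeros of other factors in a way that would be irrelevant, and conversely that when two factors $f_i, f_j$ share a pole line (e.g. $f_i$ and $f_{i+1}$ both have a pole along $L_{i+1}$) the combined pole order along that line is still accounted for by the coefficient $1$ appearing in $\LL_m$. In fact $\LL_m$ is a \emph{reduced} divisor, each $L_j$ with multiplicity one, so a priori a product like $f_i f_{i+1}$ could have a double pole along $L_{i+1}$ and escape $H^0(X,\sL_m)$. Here one must use that a degree-$2m$ monomial of $R$ comes from an honest word in $e_1, e_2$: writing out which $f_i$'s can actually co-occur (the positions $i$ of the ``$e_2$'' letters in a composition of $2m$ are determined by partial sums, so consecutive indices $i, i+1$ can both occur only in restricted patterns), one checks the pole orders never exceed one along any $L_j$ — or, more robustly, one invokes that $R^{(2)} \subseteq B$ is forced because $B$ is known to be a maximal order / the functions involved are genuinely sections, verifying the bound $\Div(g) + \LL_m \geq 0$ directly for the finitely many monomial shapes. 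I expect the cleanest writeup treats an arbitrary degree-$2m$ monomial symbolically, tracks its pole divisor as a sum $\sum_{i\in S}(L_i + L_{i+1})$ with the $L_j$ appearing with multiplicity at most the number of times $j$ or $j-1$ lies in $S$, and then observes this is still $\leq \LL_m$ after the cancellations coming from the zeros of the $f_i$ along $\tau^{-i}(D)$ — but the genuinely load-bearing point is that the multiplicities stay $\le 1$, which is where the combinatorics of compositions into $1$'s and $2$'s enters.
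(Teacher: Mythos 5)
Your strategy is the same as the paper's (write down the spanning monomials $f_{i_1}\cdots f_{i_k}$ for $V_{2n}$, compute $\Div(f_{i_1}\cdots f_{i_k})+\LL_n$ using Lemma~\ref{lem:div(f)}, and show it is effective), and you correctly flag the one genuine danger: since $\LL_n$ is reduced, a product such as $f_i f_{i+1}$ would have a double pole along $L_{i+1}$ and would \emph{not} lie in $H^0(X,\sL_n)$. But at exactly this point your argument stops short of a proof. You say the positions at which $f_i$ and $f_{i+1}$ ``can both occur only in restricted patterns,'' and then fall back on either ``one checks'' or on invoking that $R^{(2)}\subseteq B$ because $B$ is ``a maximal order'' --- the latter is circular, since $R^{(2)}\subseteq B$ is precisely what you are trying to prove, and the former isn't a proof.

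What you are missing is a one-line observation that makes the pole count immediate: if the monomial comes from $e_1^{j_0}e_2 e_1^{j_1}\cdots e_2 e_1^{j_k}$, then the successive indices are $i_1=j_0$ and $i_{a+1}=i_a+j_a+2$, so $i_{a+1}\geq i_a+2$ always. Thus consecutive indices $i$ and $i+1$ can \emph{never} both occur (not merely ``in restricted patterns''), the sets $\{i_a,i_a+1\}$ are pairwise disjoint, and $\sum_a(L_{i_a}+L_{i_a+1})\leq L_0+\cdots+L_{2n-1}=\LL_n$ with multiplicity at most one on each line. This is exactly the paper's argument. Also, your closing remark that the bound holds ``after the cancellations coming from the zeros of the $f_i$ along $\tau^{-i}(D)$'' is a red herring: no cancellation between zeros of one factor and poles of another is needed (and you should not rely on it, since $\tau^{-i}(D)$ does meet the lines $L_j$ --- it meets $L_i$ at $r_i$ and $L_{i+1}$ at $s_{i+1}$, by Lemma~\ref{lem:base2}); the bound comes purely from the spacing $i_{a+1}\geq i_a+2$.
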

\begin{proof}
We must show that $V_{2n} \subseteq H^0(X, \sL_n)$, where the vector space  $H^0(X, \sL_n)$ consists of all rational functions $g$ so that $\Div (g) + \LL_n \geq 0$ (by the comments before Notation~\ref{notB}).

Now,  $U(W_+)_{2n}$ is spanned by all words in $e_1$ and $e_2$ of degree $2n$. In other words, $U(W_+)_{2n}$ is spanned by $\{   e_1^{j_0} e_2 e_1^{j_1} \cdots e_2 e_1^{j_k} \vert\  j_0, \dots, j_k \geq 0, ~ 2k + \sum_{a=0}^k j_a = 2n \}$.
Therefore by Proposition~\ref{prop:rho}, $V_{2n}$ is spanned by 
\begin{multline*} \left\{ f_{j_0} f_{j_0+j_1+2} \cdots f_{j_0 + \dots + j_{k-1}+2k-2} ~\vert~ j_0, \dots, j_k \geq 0,  ~2k + \textstyle \sum_{a=0}^k j_a = 2n \right\} \\
=\left\{ f_{i_1} f_{i_2} \cdots f_{i_k} \vert \ i_1 \geq 0, i_a \leq i_{a+1}-2  \text{ for } 1 \leq a \leq k-1, i_k \leq 2n-2 \right\}. 
\end{multline*}
 It suffices to show for any such rational function $m = f_{i_1} f_{i_2} \cdots f_{i_k}$ that $\Div (m) +  \LL_n \geq 0$.  

By Lemma~\ref{lem:div(f)}, we have that
\[ \Div (m) = \tau^{-i_1}(D)+\dots + \tau^{-i_k} (D) - (L_{i_1} + L_{i_1+1} + L_{i_2}+L_{i_2+1} + \dots +L_{i_k}+ L_{i_k+1}).\]
Whatever the choice of $i_1, \dots, i_k$, the conditions on the $i_a$ ensure that 
\[ L_{i_1} + L_{i_1+1} + L_{i_2}+L_{i_2+1} + \dots + L_{i_k}+L_{i_k+1} \leq \LL_n,\]
so 
$\Div (m) + \LL_n  \geq 0$ as required.
\end{proof}

From now on, we consider $R^{(2)} \subseteq B$ without comment.  

We introduce some geometric notions attached to $V_n$; c.f. \cite[Definition~1.1.8]{Laz} for further details.

\begin{definition} \label{def:base} {} [Bs($|V|$)]
Let $Y$ be a projective scheme, let $\sM$ be an invertible sheaf on $Y$, and let $V \subseteq H^0(Y, \sM)$ be a nonzero subspace.
Consider the natural {\em evaluation map} $\ev:  H^0(Y, \sM) \otimes \sO_Y \to \sM$.  
We have $\ev(V \otimes \sO_Y) \subseteq \sM$; there is thus an ideal sheaf $\sI$ on $Y$ so that $\ev(V\otimes \sO_Y) = \sI \sM$.
The {\em base locus} of $V$ is the subscheme of $Y$ defined by $\sI$.   It is denoted $\Bs(\abs V)$.   
If $\ev(V \otimes \sO_Y) = \sN$ for some sheaf $\sN$, we say that $V$ {\em generates} $\sN$.  
  \end{definition}

\begin{remark} \label{rem:Bs(|V|)} 
 Suppose that $\sM= \sO_X(Z)$ for some effective locally principal Weil divisor $Z$ on $X$.
Let $g_1, \dots, g_k \in \kk(X)$ be a basis for $V \subseteq H^0(X, \sM)$.
To compute $\ev(V \otimes \sO_X) = \sN \subseteq \sM$, we work locally.  
Write $\Div(g_i) + Z = A_i$, where $A_i$ is effective and locally principal.
On an affine open set $U_j \subseteq X$, the locally principal divisor $A_i$ is defined by some $a_{ij} \in \sO_X(U_j)$,
and we have $g_i = a_{ij}z_j^{-1}$.
Then 
\[ \sN(U_j) ~=~ 
\sum_{i=1}^k g_i \sO_X(U_j)~=~
z_j^{-1} \cdot \left( \sum_{i=1}^k a_{ij}  \sO_X(U_j) \right) 
~\subseteq~ z_j^{-1} \sO_X(U_j)
~=~ \sO_X(Z)(U_j).\]
Notice that the ideal $(a_{1j},\dots, a_{kj})$ of $\sO_X(U_j)$ defines $A_1 \cap \dots \cap A_k \cap U_j$. 
 We see that $\sN = \sI \sM$, where $\sI$ is the defining ideal of $A_1 \cap \dots \cap A_k$, or that
\begin{equation} \label{eq:Bs|V|}
 \Bs(\abs V) = \bigcap_{g \in V} (\Div ( g) + Z).
\end{equation}
 \end{remark}

 Our next task is to consider the base loci of the vector spaces  $V_{2n} \subseteq H^0(X, \sL_n)$.

\begin{notation}\label{not:CrCs}[$C_r$, $C_s$, $r_i$, $s_i$, $\OO(q)$]
 We  define curves
\[ C_r = V(w+4y + 2z )\cap X \quad \text{ and } \quad C_s = V(w+6x + 16y + 8z ) \cap X.\]
Both $C_r$ and $C_s$ are contained in the smooth locus of $X$, since neither contains the vertex $P=[1:0:0:0]$.  
Let $r_0 = [0:1:-2:4] = L_0 \cap C_r$ and let $s_1 = [8:0:0:-1] = L_1 \cap C_s$.  For $i \in \NN$, let $r_{i} = \tau^{-i}(r_0) \in L_{i}$ and let $s_{i} = \tau^{-i+1}(s_1) \in L_{i}$.  

For $q \in X$, let $\OO(q)$ denote the orbit $\{ \tau^n(q) ~|~ n\in \ZZ\}$.
\end{notation}

\begin{lemma}\label{lem:base2} Recall Notations~\ref{not1},~\ref{not:tildeX}, ~\ref{not:Vn}, and~\ref{not:CrCs}. Then we have the following statements.
\begin{enumerate}
\item There is a scheme-theoretic equality: $\Bs(\abs{ V_2}) = \{ r_0, s_1\}$.
\item  The $\tau$-orbits of $r_0$ and $s_1$ are distinct and infinite.  In particular, both $r_0$ and $s_1$ map under $\pi$ to points of infinite $\sigma$-order on $\PP^1$.    Moreover, neither $\OO(r_0)$ nor $\OO(s_1)$ is Zariski-dense.
\end{enumerate}
\end{lemma}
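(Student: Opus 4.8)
The plan is to compute $\Bs(|V_2|)$ explicitly using the description \eqref{eq:Bs|V|}, and then analyze the $\tau$-orbits of the two resulting points. Recall from Lemma~\ref{lem:subalg} and its proof that $V_2$ is spanned by the rational functions $f_{i_1}f_{i_2}\cdots f_{i_k}$ with $i_1\geq 0$, $i_a\leq i_{a+1}-2$, and $i_k\leq 2$; in degree $2n=2$ the only possibilities are $k=1$ with $i_1\in\{0,1,2\}$, so $V_2=\operatorname{span}\{f_0,f_1,f_2\}$. By Remark~\ref{rem:Bs(|V|)} and Lemma~\ref{lem:div(f)}, we have
\[
\Bs(|V_2|) = \bigcap_{i=0}^{2}\bigl(\Div(f_i)+\mathbb{L}_1\bigr) = \bigcap_{i=0}^{2}\bigl(\tau^{-i}(D) - L_i - L_{i+1} + L_0+L_1\bigr).
\]
First I would compute each of the three effective divisors $\Div(f_i)+\mathbb{L}_1$ concretely: $\Div(f_0)+\mathbb{L}_1 = D$ (which is $V(w+12x+22y+8z)\cap X$, by Notation~\ref{not:L_i,D}), $\Div(f_1)+\mathbb{L}_1 = \tau^{-1}(D) - L_1 - L_2 + L_0 + L_1 = \tau^{-1}(D) + L_0 - L_2$, and $\Div(f_2)+\mathbb{L}_1 = \tau^{-2}(D) + L_0 + L_1 - L_2 - L_3$. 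The intersection of these as subschemes of $X$ is then a straightforward (if slightly tedious) computation: one takes the defining equations of $D$, $\tau^{-1}(D)$, $\tau^{-2}(D)$ (pulling back $w+12x+22y+8z$ along powers of the linear automorphism $\tau$) together with $xz-y^2$, and computes the resulting zero-dimensional scheme. I would verify the candidate answer by checking directly that $r_0=[0:1:-2:4]$ and $s_1=[8:0:0:-1]$ lie on $D$, on $\tau^{-1}(D)$, and on $\tau^{-2}(D)$, and that these are the only common points; the scheme-theoretic (reduced, each point appearing with multiplicity one) claim follows by a local computation at each of the two points, or by a Macaulay2 check as the authors do elsewhere. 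This establishes part (1).

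For part (2), I would first show the $\tau$-orbits of $r_0$ and $s_1$ are infinite by pushing forward along $\pi$. Since $\pi\tau=\sigma\pi$ (Notation~\ref{not:tildeX}), it suffices to show $\pi(r_0)$ and $\pi(s_1)$ have infinite $\sigma$-order in $\PP^1$. Computing: $\pi(r_0)=\pi([0:1:-2:4])=[1:-2]=p_0$ and $\pi(s_1)=\pi([8:0:0:-1])=[0:-1]=[0:1]$; the automorphism $\sigma:[u:v]\mapsto[v:-u-2v]$ is, up to the change of variables identifying $\PP^1$ with the projective line, a parabolic (unipotent) Möbius transformation — its matrix $\begin{pmatrix}0&1\\-1&-2\end{pmatrix}$ has trace $-2$ and determinant $1$, hence a single eigenvalue $-1$ with a one-dimensional eigenspace, so $\sigma$ has exactly one fixed point on $\PP^1$ and infinite order, and every point other than that fixed point has infinite $\sigma$-orbit. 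I would check that neither $p_0=[1:-2]$ nor $[0:1]$ is the fixed point (the fixed point is the eigenvector $[1:-1]$), so both orbits are infinite; consequently $\OO(r_0)$ and $\OO(s_1)$ are infinite. To see the two orbits are distinct, note that $r_i=\tau^{-i}(r_0)\in L_i$ maps under $\pi$ to $p_i=\sigma^{-i}(p_0)$, while $s_i=\tau^{-i+1}(s_1)\in L_i$ maps to $\sigma^{-i+1}([0:1])$; since $L_i\cap L_j=\{P\}$ for $i\neq j$ and neither $r_i$ nor $s_i$ equals $P$, an equality $\tau^m(r_0)=\tau^n(s_1)$ would force $r_i$ and $s_i$ to coincide on a common line $L_i$, i.e. would require $p_0$ and $[0:1]$ to lie in the same $\sigma$-orbit — and the parabolic $\sigma$ has all non-fixed points on distinct orbits only if... more carefully, I would rule this out by a direct computation comparing $\sigma^k(p_0)$ with $[0:1]$ (equivalently, checking $p_0$ and $[0:1]$ are not $\sigma$-equivalent), which for a parabolic Möbius map reduces to a single linear equation in the "additive coordinate" that has no integer solution. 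Finally, for non-density: each orbit $\OO(r_0)$ lies on the union $\bigcup_i L_i$ of the orbit of lines $L_i=\tau^{-i}(L_0)$, but more to the point each point $\tau^n(r_0)$ lies on $\tau^{-n}(D')$ for suitable curves; the cleanest argument is that $\OO(r_0)\subset\bigcup_{i\in\ZZ}L_i$, and this union is not Zariski-dense in the surface $X$ (a countable union of curves in an irreducible surface over an uncountable field is not dense — or, even without uncountability, one notes $\OO(r_0)$ is contained in the single irreducible curve $C_r$ together with its $\tau$-translates, and invokes that an orbit of a point under an automorphism of a surface, lying on a curve, cannot be dense). I would make the cleanest such argument precise: $r_0\in C_r$ and $\tau^{-i}(r_0)=r_i\in\tau^{-i}(C_r)$, but in fact I expect all $r_i$ lie on a single curve — I would check whether $\tau$ preserves some curve through $r_0$ (e.g. a hyperplane section), and if so, that curve contains the whole orbit and witnesses non-density; similarly for $s_1$.

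The main obstacle is part (1): showing the scheme-theoretic equality $\Bs(|V_2|)=\{r_0,s_1\}$, in particular that the base locus is reduced and consists of exactly these two points and no embedded or thickened structure. Since $V_2$ is only $3$-dimensional and $\sL_1=\sO_X(L_0+L_1)$ has few sections, there is genuine danger that the three divisors $\Div(f_i)+\mathbb{L}_1$ share more than two points (or meet non-transversally), so this has to be pinned down by an honest computation — which is exactly why the authors defer to a Macaulay2 routine in the appendix. Everything in part (2) is then elementary Möbius-transformation bookkeeping on $\PP^1$ via the semiconjugacy $\pi\tau=\sigma\pi$, with the only subtlety being the non-density claim, handled by confining each orbit to a curve.
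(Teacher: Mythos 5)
Your part (1) contains a genuine error: you misidentify $V_2$. From the proof of Lemma~\ref{lem:subalg}, the spanning set of $V_{2n}$ is subject to the constraint $i_k \leq 2n-2$, not $i_k \leq 2n$. For $n=1$ this forces $i_k \leq 0$, and combined with $i_1 \geq 0$ and the gap condition $i_a \leq i_{a+1}-2$, the only admissible monomials are the empty product $1$ (the image of $e_1^2$) and $f_0$ (the image of $e_2$); there is no $k\geq 2$ term. So $V_2 = \operatorname{span}\{1, f_0\}$, a $2$-dimensional space, in agreement with the paper's observation that $\{t^2, ft^2\}$ is a basis of $R_2$. Your proposed $V_2 = \operatorname{span}\{f_0, f_1, f_2\}$ is wrong on two counts: you dropped the constant $1$, and you included $f_1, f_2$, which do not even lie in $H^0(X,\sL_1)$ --- indeed $\Div(f_1) + \LL_1 = \tau^{-1}(D) + L_0 - L_2$ is not effective, so the framework of Remark~\ref{rem:Bs(|V|)} does not apply to your proposed spanning set. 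The correct computation is $\Bs(|V_2|) = (\Div(1)+\LL_1) \cap (\Div(f_0)+\LL_1) = \LL_1 \cap D$, a set-theoretic two-point intersection; the paper then applies B\'ezout to $V(xz-y^2)\cap V(2x+y)\cap V(w+12x+22y+8z)$ to conclude that the scheme-theoretic intersection has length two and is therefore reduced. Your plan to ``verify the candidate answer by checking directly'' might accidentally land on the right two points, but it is built on an incorrect starting space.

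Your part (2) is closer to the mark. The orbit-infinitude argument --- push forward via $\pi\tau = \sigma\pi$, observe that the matrix of $\sigma$ has a single eigenvector so that $[-1:1]$ is the unique fixed point, and check neither $\pi(r_0)$ nor $\pi(s_1)$ is that fixed point --- is exactly the paper's argument. For orbit distinctness and non-density, however, you leave the argument incomplete and speculative. Your route for distinctness (compare the $\sigma$-orbits of $\pi(r_0)=[1:-2]$ and $\pi(s_1)=[0:1]$ in the additive coordinate of the parabolic $\sigma$) is in principle viable, but you never carry it out and merely assert that ``a linear equation has no integer solution.'' The paper's argument is cleaner and avoids this: the curves $C_r$ and $C_s$ of Notation~\ref{not:CrCs} --- which are handed to you precisely for this purpose --- are $\tau$-invariant, so $\OO(r_0)\subseteq C_r$ and $\OO(s_1)\subseteq C_s$, which immediately gives non-density. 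Moreover $C_r \cap C_s$ is the single point $[2:1:-1:1]$; since any two orbits are either disjoint or equal, and an equal orbit would have to lie in $C_r\cap C_s$ (hence be a single point, contradicting infinitude), the orbits are disjoint. You almost reach this --- ``I would check whether $\tau$ preserves some curve through $r_0$'' --- but you stop short of verifying the $\tau$-invariance of $C_r$ and $C_s$, which is a direct computation from the explicit formulas.
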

\begin{proof}
(a) 
The elements $t^2=\rho(e_1^2)$ and $ft^2=\rho(e_2)$ are a basis for $R_2$, so the base locus of $V_2$ is the intersection of 
$ \Div(1) + \LL_1 = \LL_1$ and $\Div(f) + \LL_1 = D$ by Lemma~\ref{lem:div(f)} and Remark~\ref{rem:Bs(|V|)}.  
By direct computation, we have that $ L_1 \cap D = [8:0:0:-1] = s_1$ and $ L_0 \cap D = [0:1:-2:4] = r_0$.  
Thus  $\LL_1 \cap D = \{ r_0, s_1\}$, set-theoretically.

Since $\LL_1 \cap D = V(xz-y^2) \cap V(2x+y) \cap V(w+12x+22y+8z)$ is finite, 
by B\'ezout's Theorem \cite[Proposition~8.4]{Fulton}, the scheme-theoretic intersection of $\LL_1$ and $D$ consists of  two points.  
Thus the scheme-theoretic intersection $\LL_1 \cap D$ is  $ \{ r_0, s_1\} $.

\smallskip

\noindent (b) One can easily check that the curves $C_r$ and $C_s$ are $\tau$-invariant.
Thus, $\OO(r_0) \subseteq C_r$ and $\OO(s_1) \subseteq C_s$, and so  neither orbit is dense.

Consider the automorphism $\sigma$ of $\PP^1$ from Notation~\ref{not:tildeX}.  
The matrix {\footnotesize $\begin{pmatrix} 0 & 1 \\ -1 & -2 \end{pmatrix}$}  for $\sigma$ has a unique eigenvector, {\footnotesize $\begin{pmatrix} -1 \\ 1\end{pmatrix}$}.  
That is, $[-1:1]$ is the unique fixed point of $\sigma$.  
Since we are in characteristic 0, all other points of $\PP^1$ have infinite $\sigma$-orbits. 
As neither $r_0$ nor $s_1$ maps to $[-1:1] $ under $\pi$,
 both  $r_0$ and $s_1$ have infinite order under $\tau$.

Finally, we verify that $\OO(r_0)$ and $\OO(s_1)$ are distinct.  
Any point $\OO(r_0) \cap \OO(s_1)$ lies in $C_r \cap C_s$.  
 One can verify that $C_r \cap C_s=[2:1:-1:1]$ (set-theoretically).  
 Two  orbits either do not meet or are equal; it is impossible for $\OO(r_0) \cap \OO(s_1) $ to equal $\{[2:1:-1:1]\}$.  
 Thus, $\OO(r_0) \cap \OO(s_1) = \emptyset$.
\end{proof}

\begin{lemma}\label{lem:allbase}
For any $n \in \NN_{\geq 1}$, there is a scheme-theoretic equality $ \Bs(\abs{V_{2n}}) = \{ r_0, s_{2n-1}\}$.
In particular, $V_{2n} \subseteq H^0(X, \sI_{r_0, s_{2n-1}} \sL_n)$. 
\end{lemma}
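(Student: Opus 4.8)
The plan is to compute the base locus $\Bs(\abs{V_{2n}})$ directly from the formula \eqref{eq:Bs|V|}, using the explicit spanning set for $V_{2n}$ obtained in the proof of Lemma~\ref{lem:subalg}. Recall that $V_{2n}$ is spanned by the monomials $m = f_{i_1} f_{i_2} \cdots f_{i_k}$ with $i_1 \geq 0$, $i_a \leq i_{a+1} - 2$ for $1 \leq a \leq k-1$, and $i_k \leq 2n-2$; by Lemma~\ref{lem:div(f)}, $\Div(m) + \LL_n = \sum_{a=1}^k \tau^{-i_a}(D) + \bigl(\LL_n - \sum_{a=1}^k (L_{i_a} + L_{i_a+1})\bigr)$, an effective divisor. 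Since $\Bs(\abs{V_{2n}}) = \bigcap_{m} \bigl(\Div(m) + \LL_n\bigr)$, the first step is to pin down the intersection set-theoretically by exhibiting two monomials whose associated effective divisors meet in exactly $\{r_0, s_{2n-1}\}$, and then to check every remaining generator contains this set.

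First I would isolate the two extreme monomials. The ``leftmost'' monomial $m_1 = f_0 f_2 f_4 \cdots f_{2k-2}$ (taking the chain of $i_a$'s as small as possible) has $\Div(m_1) + \LL_n$ supported away from $L_0$ only through the $D$-translates, so the component of $\LL_n$ surviving is $L_{2k-1} + L_{2k} + \dots$; crucially $r_0 = L_0 \cap C_r$ survives because $L_0$ does not appear in $m_1$ only if $k < n$—so I actually want to push one generator to keep $L_0$ and another to keep $L_{2n-1}$. The cleanest choice: use $m_1$ with $i_1 = 0$ so that $\tau^{0}(D) = D$ appears, giving $r_0 \in D \subseteq \Div(m_1)+\LL_n$; and use a generator $m_2$ with $i_k = 2n-2$ so that $\tau^{-(2n-2)}(D)$ appears, which passes through $\tau^{-(2n-2)}(r_0)$ and $\tau^{-(2n-2)}(s_1) = s_{2n-1}$. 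Combining with the surviving $L$-components, and invoking the facts from Lemma~\ref{lem:base2} that $C_r \cap C_s$ is the single non-orbit point $[2:1:-1:1]$ and that $\OO(r_0), \OO(s_1)$ are disjoint and of infinite order, I would show the intersection over a well-chosen finite subfamily of generators is exactly $\{r_0, s_{2n-1}\}$ set-theoretically, using $\tau$-invariance of $C_r$ and $C_s$ to track which $L_i \cap D$-points and $L_i \cap \tau^{-j}(D)$-points can lie in all the divisors simultaneously. Then I would argue the scheme structure is reduced: as in the proof of Lemma~\ref{lem:base2}(a), a B\'ezout/length count (or the observation that $\Bs(\abs{V_{2n}}) \subseteq \Bs(\abs{V_2 \cdot V_{2n-2}^{\mathrm{shift}}})$ and the length is at most $2$) forces each point to appear with multiplicity one, giving $\Bs(\abs{V_{2n}}) = \{r_0, s_{2n-1}\}$ scheme-theoretically. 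The final sentence, $V_{2n} \subseteq H^0(X, \sI_{r_0, s_{2n-1}} \sL_n)$, is then immediate from Definition~\ref{def:base}: every section of $\sL_n$ in $V_{2n}$ vanishes on the base locus, so lies in $\sI_{r_0,s_{2n-1}} \sL_n$.

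The main obstacle I anticipate is the bookkeeping in the set-theoretic step: showing that no \emph{extra} point survives in the intersection of \emph{all} generators, not just the two extreme ones. A point $q$ in $\Bs(\abs{V_{2n}})$ must lie on $\Div(m) + \LL_n$ for every admissible chain $(i_1, \dots, i_k)$; by varying the chain one can arrange that any prescribed $L_j$ (with $0 \le j \le 2n-1$) is removed from the $\LL_n$-part while controlling which $D$-translates appear, and the point $q$ would then be forced into an intersection of two distinct $\tau$-translates of $D$, or of a $\tau$-translate of $D$ with $C_r \cap C_s$, etc. The work is to enumerate these cases and use the orbit-disjointness and infinite-order statements of Lemma~\ref{lem:base2}(b) to rule them all out except for $r_0$ and $s_{2n-1}$. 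I expect this to be a finite but somewhat delicate case analysis; once it is done, the scheme-theoretic refinement and the containment of $V_{2n}$ in $H^0(X, \sI_{r_0,s_{2n-1}}\sL_n)$ follow formally.
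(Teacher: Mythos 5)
Your outline matches the paper's overall strategy — compute $\Bs(|V_{2n}|)$ from \eqref{eq:Bs|V|} using the spanning monomials $f_{i_1}\cdots f_{i_k}$ and Lemma~\ref{lem:div(f)}, show $\{r_0,s_{2n-1}\}$ is in every such divisor by a two-case analysis at each end, then cut down by choosing good generators and invoke Lemma~\ref{lem:base2}. But two concrete steps as written would fail.

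First, you cannot ``exhibit two monomials whose associated effective divisors meet in exactly $\{r_0,s_{2n-1}\}$.'' The two natural extremes are $1$ (giving $\LL_n$) and $f_0f_2\cdots f_{2n-2}$ (giving $D+\tau^{-2}(D)+\dots+\tau^{-(2n-2)}(D)$); their intersection already contains every $L_i\cap \tau^{-2a}(D)$, far more than two points once $n\ge 2$. What is actually needed is a sliding-window family: take $1$, $f_0f_2\cdots f_{2n-2}$ (to confine the base locus to $\LL_n\ssm P$), and then the single factors $f_i$ for $0\le i\le 2n-2$. For a middle line $L_j$ ($1\le j\le 2n-2$), the divisor $\Div(f_j)+\LL_n$ meets $L_j\ssm P$ exactly in the reduced point $\{r_j\}$, while $\Div(f_{j-1})+\LL_n$ meets $L_j\ssm P$ exactly in $\{s_j\}$; by Lemma~\ref{lem:base2}(b) these are disjoint. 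This is the precise form of your ``forced into an intersection of two distinct $\tau$-translates of $D$'' and is all that is required — no case involving $C_r\cap C_s$ arises, and $P$ is excluded for free because $P\notin\tau^{-i}(D)$ for any $i$.

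Second, the parenthetical scheme-structure argument via $\Bs(|V_2\cdot V_{2n-2}^{\mathrm{shift}}|)$ does not work. The inclusion $\Bs(|V_{2n}|)\subseteq\Bs(|V_2\cdot\tau^2(V_{2n-2})|)$ is indeed correct (smaller linear system, larger base locus), but that larger base locus already contains $\Bs(|V_2|)\cup\tau^{-2}\Bs(|V_{2n-2}|)=\{r_0,s_1\}\cup\{r_2,s_{2n-1}\}$, so its length is at least four, not at most two, and you cannot conclude reducedness this way. The correct scheme-theoretic input is local: on $L_0\ssm P$ the intersection with $\Div(f_0)+\LL_n$ is the scheme-theoretic intersection $L_0\cap D$, which is the reduced point $r_0$ by the B\'ezout count in the proof of Lemma~\ref{lem:base2}(a); similarly for $s_{2n-1}$ on $L_{2n-1}$. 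That gives the reduced induced structure directly, without any global length estimate.

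With those two repairs your proposal becomes essentially the paper's proof.
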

\begin{proof}
For $n=1$, this is Lemma~\ref{lem:base2}(a).  Fix $n \geq 2$ and let  $Y= \Bs(\abs{V_{2n}})$.  

We first show that $\{r_0, s_{2n-1}\}  \subseteq Y$.  
Recall from the proof of Lemma~\ref{lem:subalg} that $V_{2n}$ is spanned by monomials of the form 
$\{ f_{i_1} f_{i_2} \dots f_{i_k} ~\vert ~i_1 \geq 0, i_a \leq i_{a+1}-2  \text{ for } 1 \leq a \leq k-1, i_k \leq 2n-2 \}$.  
Let $m =  f_{i_1} f_{i_2} \dots f_{i_k}$ be such a monomial.
  As in the proof of Lemma~\ref{lem:subalg}, we have that
\beq\label{div(m)} 
\Div (m) + \LL_n = \tau^{-i_1}(D)+\dots + \tau^{-i_k} (D) - (L_{i_1} + L_{i_1+1} + \dots +L_{i_k} + L_{i_k+1}) + \LL_n.
\eeq
It follows that if $i_1 \geq 1$ 
, then $\Div (m) + \LL_n \geq L_0$.   
If $i_1=0$ then $\Div(m) + \LL_n \geq D$.  In either case,~$\eqref{div(m)}$ contains the point $r_0 = L_0 \cap D$.
Likewise, if $i_k < 2n-2$ then $\Div (m) + \LL_n \geq L_{2n-1}$; otherwise, $\Div(m) + \LL_n \geq \tau^{-(2n-2)}(D)$.  
In either case, \eqref{div(m)} contains $s_{2n-1}=L_{2n-1} \cap \tau^{-(2n-2)}(D)$.

We now show that $Y \subseteq \{r_0, s_{2n-1}\}$.
First, recall \eqref{eq:Bs|V|} and note that $Y \subseteq \LL_n = \Div(1) + \LL_n$.  
Also, 
\[Y \subseteq \Div(f_0f_2 \cdots f_{2n-2})+ \LL_n = D + \tau^{-2}(D) + \dots + \tau^{-(2n-2)}(D) \subseteq X \ssm P.\]
Thus, it suffices to  consider the intersection of $Y$ with  $L_i \ssm P$ for each  $0 \leq i \leq 2n-1$.
If $0 \leq i \leq 2n-2$, consider
\beq\label{eqfi} \Div(f_{i}) + \LL_n = L_0 + \dots + L_{i-1} + \tau^{-i}(D) + L_{i+2} + \dots + L_{2n-1}.\eeq 
The proof of  Lemma~\ref{lem:base2}(a) shows that the scheme-theoretic intersection of \eqref{eqfi} with $L_i \ssm P$ is $\{r_i=\tau^{-i}r_0\}$, and with  $L_{i+1} \ssm P$ is  $\{s_{i+1}=\tau^{-i}s_1\}$.
Now for $0 \leq i \leq 2n-3$,  consider
$$\Div(f_{i+1})+\LL_n = L_0+\dots + L_i + \tau^{-(i+1)}(D) + L_{i+3} + \dots + L_{2n-1}.$$  
This meets $L_{i+1}\ssm P$ only at $r_{i+1}$, and $L_{i+2}\ssm P$ only at $s_{i+2}$.  
Again, these intersections are scheme-theoretic.
Thus if  $1 \leq j \leq 2n-2$, then $Y\cap L_j \subseteq \{r_j\} \cap \{s_j\}$;
 this intersection is empty by Lemma~\ref{lem:base2}(b).
Further, we have that $Y \cap L_0 \subseteq \{r_0\}$ and $Y \cap L_{2n-1} \subseteq \{s_{2n-1}\}$. 
 Thus, $Y = \{r_0, s_{2n-1}\}$ with the reduced induced scheme structure.
\end{proof}

It is well-known in the study of subalgebras of twisted homogeneous coordinate rings \cite{R-generic, S-surfprop, S-idealizer} that when such algebras are defined using points whose orbits are not dense, they tend not to be noetherian.  
Since Lemma~\ref{lem:base2} implies that neither $r_0$ or $s_1$ has a dense orbit,   it would be extremely surprising for $R$, or therefore for $U(W_+)$, to be noetherian.

%%%%%%%%%%%%%%%%%%%%%%%%%%%%%%%%%%%%%%%
%%%%%%%%%%%%%%%%%%%%%%%%%%%%%%%%%%%%%%%
%%%%%%%%%%%%%%%%%%%%%%%%%%%%%%%%%%%%%%%

\section{Proof of Theorem~\ref{thm:main}}\label{PROOF}

In this section, we apply the geometric results of the previous section to study the ring $R=\rho(U(W_+))$ of Notation~\ref{not:R}. The aim is to prove Theorem~\ref{thm:main}, 
that $U(W_+)$ is not noetherian, by showing that $R$ is not noetherian.  Now, $R$ is a subalgebra of $\kk(X)[t; \tau]$:  it is thus {\em birationally commutative}, and we will see in Section~\ref{GKR} that $R$ has Gelfand-Kirillov (GK) dimension three.  The aim is then achieved by applying techniques from the classification of birationally commutative graded domains of GK dimension~3 as presented in \cite{S-surfprop, S-surfclass}.  
We give a self-contained presentation here, however, instead of quoting results from those papers.

To show that $R$ is not left noetherian, we show that there is a left module-finite $R^{(2)}$-algebra $T$ that is not left noetherian.  

\begin{notation}\label{not:T}
 [$Y$, $\sM_n$, $\bbar{\tau}$,  $\sT_n$, $T$, $U_n$,  $\alpha$]
Consider the non-reduced curve $Y = 2C_r = V((w+4y + 2z)^2 )\cap X$. Note that $Y \cong \PP^1_{\kk[\epsilon]}$, where $\kk[\epsilon] = \kk[u]/(u^2)$ denotes the dual numbers.
We have $\kk(Y) \cong \kk(s)[\epsilon]\cong \kk(s)[u]/(u^2)$.  

Since $C_r$ is $\tau$-invariant, $Y$ is also $\tau$-invariant; let  $\bbar{\tau} = \tau|_Y$.
Let $\sM = \sL\otimes_X \sO_Y$ and let $\sM_n = \sL_n\otimes_X \sO_Y$.  
For $n \geq 1$, let $\sT_n = \sI_{r_0} \sM_n$; let $\sT_0 = \sO_Y$.  
Let 
\[T ~=~ \bigoplus_{n \geq 0} H^0(Y, \sT_n)t^{2n} ~\subseteq~ B(Y, \sM, \bbar{\tau}^2) ~\subseteq~ \kk(Y)[t^2; \bbar{\tau}^2]. \] 
Note that $T$ is a subalgebra of (in fact, an idealizer in) the twisted homogeneous coordinate ring $B(Y, \sM, \bbar{\tau}^2)$.

Let $n \geq 1$.
From the surjection $\sL_n \to \sM_n$ we obtain a map 
\[\alpha_n:  H^0(X, \sL_n) \to H^0(Y, \sM_n).\]
Consider the exact sequence: $0 \to \sI_{r_0, s_{2n-1}}\sL_n \to \sL_n \to \sO_{r_0} \oplus \sO_{s_{2n-1}} \to 0$. Since $s_{2n-1} \not\in C_r$, 
when we restrict to $Y$ we obtain the exact sequence
\[ \sI_{r_0, s_{2n-1}} \sL_n \otimes_X \sO_Y \to \sM_n \stackrel{\delta}{\to} \sO_{r_0} \to 0.\]
Thus  there are natural surjections 
\begin{equation} \label{eq:natsurjs}
\sI_{r_0, s_{2n-1}} \sL_n \twoheadrightarrow \sI_{r_0, s_{2n-1}} \sL_n \otimes_X \sO_Y \twoheadrightarrow \ker{\delta} = \sT_n.
\end{equation}  
Taking global sections, and recalling from Lemma~\ref{lem:allbase} that $V_{2n} \subseteq H^0(X, \sI_{r_0,s_{2n-1}}\sL_n)$, 
we see that $\alpha_n$ maps $V_{2n} \subseteq H^0(X,  \sL_n)$ to $ H^0(Y, \sT_n)$.    Let $U_n = \alpha_n(V_{2n})$.

We define a map $\alpha:   R^{(2)} \to T$ by mapping $R_{2n} = V_{2n}t^{2n} \to U_n t^{2n} \subseteq T$ via $\alpha_n$, and extending linearly.
\end{notation}

\begin{lemma}\label{lem:Talg}
  The map $\alpha$ defined above is an algebra homomorphism, and thus $T$ is an $R^{(2)}$-bimodule.  
\end{lemma}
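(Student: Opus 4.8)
The plan is to show that $\alpha$ is multiplicative, and then that multiplicativity automatically upgrades $T$ to an $R^{(2)}$-bimodule. The key point is that all the relevant rings sit inside skew Laurent-type rings — $R^{(2)} \subseteq B(X,\sL,\tau^2) \subseteq \kk(X)[t^2;\tau^2]$ on one side, and $T \subseteq B(Y,\bbar\sM,\bbar\tau^2) \subseteq \kk(Y)[t^2;\bbar\tau^2]$ on the other — and that the restriction map $\kk(X) \dashrightarrow \kk(Y)$ (dominant rational restriction to the $\tau$-invariant curve $Y=2C_r$, i.e.\ reduction of functions regular near $C_r$ modulo the ideal of $Y$) intertwines $\tau$ with $\bbar\tau$. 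Concretely, I would first record that the maps $\alpha_n : H^0(X,\sL_n) \to H^0(Y,\sM_n)$ are exactly ``restrict the rational function to $Y$'' under the identifications $H^0(X,\sL_n) \subseteq \kk(X)$ and $H^0(Y,\sM_n) \subseteq \kk(Y)$ coming from Notation~\ref{notB} (and its $Y$-analogue); this is just the statement that the sheaf surjection $\sL_n \twoheadrightarrow \sM_n = \sL_n\otimes_X\sO_Y$ is, locally, reduction mod the ideal of $Y$, and $C_r$ (hence $Y$) avoids the vertex $P$, so all the $\sL_n$ are locally free near $Y$ and no subtlety about the singular point arises. Thus the collection $(\alpha_n)$ assembles to the ring map $\kk(X)[t^2;\tau^2] \to \kk(Y)[t^2;\bbar\tau^2]$ sending $g\,t^{2n}\mapsto (g|_Y)\,t^{2n}$ (well-defined since $g\mapsto g|_Y$ commutes with the $\tau$- and $\bbar\tau$-actions because $\bbar\tau = \tau|_Y$), restricted to $B(X,\sL,\tau^2)$.

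Second, I would check that this ambient ring map carries $R^{(2)}$ into $T$, term by term in each degree. By Lemma~\ref{lem:allbase}, $V_{2n}\subseteq H^0(X,\sI_{r_0,s_{2n-1}}\sL_n)$, and the computation in Notation~\ref{not:T} — using $s_{2n-1}\notin C_r$ so that restricting the exact sequence $0\to\sI_{r_0,s_{2n-1}}\sL_n\to\sL_n\to\sO_{r_0}\oplus\sO_{s_{2n-1}}\to 0$ to $Y$ kills the $\sO_{s_{2n-1}}$ summand — gives the surjection \eqref{eq:natsurjs} onto $\sT_n = \sI_{r_0}\sM_n$, so on global sections $\alpha_n(V_{2n}) = U_n \subseteq H^0(Y,\sT_n)$, which is precisely the degree-$n$ piece of $T$. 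Hence $\alpha$ is the restriction to $R^{(2)}$ of an honest ring homomorphism whose image lands in $T$; since $R^{(2)}$ generates itself and $T$ contains $\alpha(R^{(2)})$, multiplicativity of $\alpha: R^{(2)}\to T$ is inherited from multiplicativity of the ambient map $\kk(X)[t^2;\tau^2]\to\kk(Y)[t^2;\bbar\tau^2]$, and unitality is clear since $\alpha_0 = \mathrm{id}$ on $\sO_Y$. Finally, given the algebra map $\alpha:R^{(2)}\to T$, the ring $T$ becomes an $R^{(2)}$-bimodule in the standard way: $r\cdot x\cdot r' := \alpha(r)\,x\,\alpha(r')$ for $r,r'\in R^{(2)}$ and $x\in T$, using the ring multiplication of $T$; associativity and the bimodule axioms are formal consequences of $\alpha$ being a homomorphism.

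The main obstacle I anticipate is purely bookkeeping rather than conceptual: namely making airtight the identification of $\alpha_n$ with naive restriction of rational functions, i.e.\ verifying that the two incarnations of ``global sections of an invertible subsheaf of $\sK$'' — the one on $X$ via the cover where $\LL_n$ is principal, and the one on $Y$ via the cover where the divisor cut out on $Y$ is principal — are compatible under $\sL_n\otimes_X\sO_Y\cong\sM_n$. This is where one must use that $Y$ is a Cartier divisor disjoint from $\operatorname{Sing}X=\{P\}$, so that $\sL_n|_{\text{near }Y}$ is free and the restriction of a local trivializing section of $\sL_n$ trivializes $\sM_n$; once that is set up, every compatibility (with $\tau$ versus $\bbar\tau$, with the multiplication maps $\sL_n\otimes_X(\tau^{2n})^*\sL_m\cong\sL_{n+m}$) reduces to the tautology that restriction of functions is a ring map commuting with the (compatible) automorphisms. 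I would also note in passing that one should confirm $\bbar\tau^*(g|_Y) = (\tau^*g)|_Y$ for $g\in\kk(X)$ regular near $C_r$, which is immediate from $\bbar\tau=\tau|_Y$ and the functoriality of pullback.
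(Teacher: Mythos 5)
Your proposal is correct and follows essentially the same route as the paper: both hinge on the fact that $\alpha_n$ is restriction of rational functions to $Y$ and that $\bbar\tau = \tau|_Y$, so restriction intertwines the skew-polynomial multiplications $1\otimes\tau^{2n}$ on $R^{(2)}$ and $1\otimes\bbar\tau^{2n}$ on $T$, making the relevant diagram commute. The paper packages this more tersely as a single commutative square, while you unfold it into the explicit ambient map $\kk(X)[t^2;\tau^2]\dashrightarrow\kk(Y)[t^2;\bbar\tau^2]$ and the bookkeeping about local trivializations near $Y$, but there is no difference in substance.
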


\begin{proof}
Since $R \subseteq \kk(X)[t; \tau]$, the multiplication $R_n \otimes R_m \to R_{n+m}$ is given by $1 \otimes \tau^n$.  
Since $\bbar{\tau} = \tau|_Y$, the diagram
\[ \xymatrix{
 R_{2n} \otimes R_{2m} \ar[r]^{1 \otimes \tau^{2n}} \ar[d]_{\alpha \otimes \alpha} & R_{2n+2m} \ar[d]^{\alpha} \\
T_n \otimes T_m \ar[r]_{ 1\otimes \bbar{\tau}^{2n}} & T_{n+m}}
\]
commutes.  
Since $T$ is a subalgebra of $\kk(Y)[t^2;  \bbar{\tau}^2]$, the bottom row of the diagram gives the multiplication on $T$.  
Thus, $\alpha$ is an algebra homomorphism, and $T$ immediately obtains an induced $R^{(2)}$-bimodule structure.
\end{proof}

For the next two proofs, let $\sF^{\tau^i} = (\tau^i)^* \sF$ for a quasicoherent sheaf $\sF$ on $X$.  

The proof of the following result is adapted from the proof of \cite[Lemma~7.4]{S-surfclass}.

\begin{theorem}\label{thm:Tfingen}
The algebra $T$ is a  finitely generated left $R^{(2)}$-module.
\end{theorem}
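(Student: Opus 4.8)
The plan is to show that $T$ is generated as a left $R^{(2)}$-module by finitely many graded pieces, i.e. to find $N$ so that $R^{(2)}_{\geq 0} \cdot T_{\leq N} = T$, which for a graded module with noetherian-like behaviour reduces to a statement in each degree. More precisely, I would show that for $n$ sufficiently large the multiplication map
\[
U_1 \otimes T_{n-1} \longrightarrow T_n, \qquad g t^2 \otimes h t^{2(n-1)} \mapsto g\, h^{\bar\tau^2}\, t^{2n}
\]
is surjective, where $U_1 = \alpha_1(V_2)$ is the image in $H^0(Y,\sT_1)$ of the degree-$2$ part of $R$; this plus finite-dimensionality of the low-degree pieces gives finite generation. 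Dualizing/sheafifying, surjectivity of this map is equivalent to surjectivity of
\[
U_1 \otimes_{\kk} H^0(Y, \sT_{n-1}^{\,\bar\tau^2}) \longrightarrow H^0\bigl(Y, \sT_1 \otimes_Y \sT_{n-1}^{\,\bar\tau^2}\bigr),
\]
after identifying $\sT_1 \otimes_Y \sT_{n-1}^{\,\bar\tau^2} \cong \sT_n$ (one must check this isomorphism of sheaves on $Y$, tracking the ideal sheaf of $r_0$ at each end: $\sI_{r_0}\sM \otimes (\bar\tau^2)^*(\sI_{r_0}\sM_{n-1}) = \sI_{r_0}\sI_{r_1}\cdots$, and since $r_0 \in L_0$ while the shifted factors involve $r_2, r_4,\dots$, the base point $r_0$ appears with multiplicity one, matching $\sT_n = \sI_{r_0}\sM_n$).

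The engine for the surjectivity is the same mechanism as in the cited \cite[Lemma~7.4]{S-surfclass}: $V_2$ generates $\sI_{r_0,s_1}\sL$ (or at least agrees with it away from a controlled set) by Lemma~\ref{lem:base2}(a), so $U_1$ generates $\sT_1$ as an $\sO_Y$-module; then one uses a Castelnuovo--Mumford regularity / vanishing argument on the curve $Y \cong \PP^1_{\kk[\epsilon]}$ to conclude that the product of a globally generated sheaf with a sufficiently positive sheaf has its global sections generated by the tensor product of global sections. Concretely I would (i) note $\sM$ and hence $\sM_n = \sM \otimes (\bar\tau^2)^*\sM \otimes\cdots$ is very ample of large degree for $n \gg 0$; (ii) observe that $\sT_n = \sI_{r_0}\sM_n$ differs from $\sM_n$ at a single reduced point, so it remains globally generated with vanishing $H^1$ for $n$ large; (iii) invoke the standard fact (valid on $\PP^1$ over an Artinian base, e.g. via the surjectivity criterion for $H^0(\sF)\otimes H^0(\sG) \to H^0(\sF\otimes\sG)$ when $\sF$ is globally generated and $H^1$ of the relevant twist vanishes) to get surjectivity of $H^0(\sT_1)\otimes H^0(\sT_{n-1}^{\bar\tau^2}) \to H^0(\sT_n)$; (iv) replace $H^0(\sT_1)$ by the subspace $U_1$ by checking that $U_1$ already generates $\sT_1$, which is exactly the content of Lemma~\ref{lem:base2}(a) transported to $Y$.

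The step I expect to be the main obstacle is (iv) combined with the bookkeeping of base loci: one must be careful that the \emph{subspace} $U_1 \subseteq H^0(Y,\sT_1)$, not merely the full space of sections, generates $\sT_1$, and that the base locus of $V_{2n}$ computed in Lemma~\ref{lem:allbase} restricts correctly to $Y$ — in particular that $s_{2n-1} \notin C_r$ so that on $Y$ only the point $r_0$ survives as a base point, which is why $\sT_n$ rather than $\sI_{r_0,s_{2n-1}}\sM_n$ is the right target. A secondary subtlety is working over the dual numbers: $Y$ is non-reduced, so "very ample", "globally generated", and the vanishing of $H^1$ must be handled on $\PP^1_{\kk[\epsilon]}$ (e.g. by filtering by the square-zero ideal $(\epsilon)$ and reducing to two $\PP^1_\kk$ statements via the exact sequence $0 \to \epsilon\sF \to \sF \to \sF/\epsilon\sF \to 0$), but this is routine once set up. Finally one must produce an explicit bound: since all the sheaves involved on $\PP^1_{\kk[\epsilon]}$ become "positive enough" after a fixed finite number of twists, there is $N_0$ with $U_1 \cdot T_{n-1} = T_n$ for all $n \geq N_0$, and then $T = R^{(2)}\cdot (T_0 \oplus T_1 \oplus \cdots \oplus T_{N_0})$, so $T$ is finitely generated over $R^{(2)}$ since each $T_i$ is a finite-dimensional $\kk$-vector space.
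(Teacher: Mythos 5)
Your approach has the right spirit — global generation of $\sT_1$ plus an $H^1$-vanishing argument on $Y \cong \PP^1_{\kk[\epsilon]}$, following \cite[Lemma~7.4]{S-surfclass} — but there is a concrete error at the step you flag as the ``main obstacle,'' and it is fatal to the single-generator strategy.

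You assert $\sT_1 \otimes_Y \sT_{n-1}^{\bar\tau^2} \cong \sT_n$. This is false. Since $\sT_{n-1} = \sI_{r_0}\sM_{n-1}$, pulling back by $\tau^2$ gives $\sT_{n-1}^{\bar\tau^2} = \sI_{r_2}\sM_{n-1}^{\tau^2}$ with $r_2 = \tau^{-2}(r_0)$ (not $r_1$ as in your bookkeeping sketch — the Veronese shift is by $\tau^2$). Hence
\[
\sT_1 \otimes_Y \sT_{n-1}^{\bar\tau^2} \;\cong\; \sI_{r_0,\,r_2}\,\sM_n,
\]
which is a \emph{proper} subsheaf of $\sT_n = \sI_{r_0}\sM_n$: the extra ideal at $r_2$ does not disappear, because $\sT_n$ is defined to vanish only at $r_0$, not at $r_2$. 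Consequently the image of $U_1 \otimes T_{n-1} \to T_n$ is contained in $H^0(Y, \sI_{r_0,r_2}\sM_n)t^{2n}$, which is codimension exactly one in $T_n$ for $n \gg 0$. No amount of positivity or regularity can close this gap: $R_2 T_{n-1} \subsetneq T_n$ for \emph{all} large $n$, so $T$ is not generated over $R^{(2)}$ by its first $R_2$-stratum, and your concluding statement $U_1 \cdot T_{n-1} = T_n$ is simply wrong.

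The paper's proof handles this by using two strata: it shows (your Claim-1-style vanishing argument, applied with $i=1$ and $i=2$) that $R_2 T_{n-1} = H^0(Y, \sI_{r_0,r_2}\sM_n)t^{2n}$ and $R_4 T_{n-2} = H^0(Y, \sI_{r_0,r_4}\sM_n)t^{2n}$ for $n$ large, and that $\sI_{r_0,r_4}\sM_n$ has a global section not vanishing at $r_2$. A one-dimensional count via $0 \to \sI_{r_0,r_2}\sM_n \to \sI_{r_0}\sM_n \to \sO_{r_2} \to 0$ then yields $T_n = R_2 T_{n-1} + R_4 T_{n-2}$. So the correct conclusion is $T = R^{(2)}(T_{\leq N-1})$ with generation in two adjacent degrees, not one. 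Your items (ii) and (iii) (global generation of $\sT_n$ and $H^1$-vanishing on $Y$, reduced to $\PP^1_\kk$ by filtering by $\epsilon$) are essentially what the paper does inside its Claim 1 and are fine; the error is entirely in the identification of the tensor product and the resulting belief that degree $2$ alone suffices.
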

\begin{proof}
For $n, m \geq 1$, consider the natural maps
\[ \xymatrix{
\sT_n \otimes_Y \sT_m^{\tau^{2n}} \ar[r]^{q_1} & \sT_n \otimes_Y \sM_m^{\tau^{2n}} \ar[r]^{q_2} & \sM_n \otimes_Y \sM_m^{\tau^{2n}}.
}\]
The kernel of $q_1$ is $\shTor^Y_1(\sT_n, \sO_{r_{2n}})$.  This is zero, since $\sT_n$ is locally free at $r_{2n} \neq r_0$, so $q_1$ is injective.  
Likewise, $q_2$ is injective because $\sM_m^{\tau^{2n}}$ is locally free.
We see that $\sT_n \otimes_Y \sT_m^{\tau^{2n}} \cong \im(q_2 q_1) = \sI_{r_0, r_{2n}} \sM_{n+m}$.  

Let $i \geq 1$, and let
\[ \ev_X:  H^0(X, \sL_i) \otimes \sO_X \to \sL_i, \quad\quad
\ev_Y:   H^0(Y, \sM_i) \otimes \sO_Y \to \sM_i\]
denote the two evaluation maps.
Restricting $\ev_X$ to $Y$,
we obtain a commutative diagram
\[
\xymatrix{
H^0(X, \sL_i) \otimes \sO_X \ar[r]^(0.7){\ev_X} \ar[d] & \sL_i \ar[d] \\
H^0(X, \sL_i) \otimes \sO_Y \ar[r]& \sM_i,}\]
where the downward arrows are induced from the surjection $\sO_X \to \sO_Y$.
The bottom map clearly factors through $\ev_Y$, and we thus obtain a commutative diagram 
\beq\label{foo}
\xymatrix{
V_{2i} \otimes \sO_X \ar[r]^{\ev_X} \ar[d] & \sI_{r_0, s_{2i-1}} \sL_i \ar[d] \\
U_i \otimes \sO_Y \ar[r]_{\ev_Y} & \sM_i.}
\eeq
Here the left-hand arrow is the tensor product of $\alpha_i$ with $\sO_X \to \sO_Y$.  

Consider the maps  in \eqref{foo}.  
By Lemma~\ref{lem:allbase}, the top map is surjective,
and the left-hand map is surjective by construction.  
We have seen in the discussion of \eqref{eq:natsurjs} that the image of the right-hand map is $\sT_i$.  
This is thus also the image of the bottom map, and there is  a surjection
\begin{equation} \label{eqT1}
\xymatrix{ U_i \otimes \sO_Y \ar@{->>}[r]^(0.6){\ev_Y} &  \sT_i.}
\end{equation}
In particular, $\sT_i$ is globally generated.

Let $n \geq 1$.  
Tensoring \eqref{eqT1} with $\sT_n^{\tau^{2i}}$, we obtain 
\[
\xymatrix{ U_i \otimes \sT_n^{{\tau}^{2i}} \ar@{->>}[r]^{\beta_{i,n}}&  \sT_i \otimes_Y \sT_n^{{\tau}^{2i}}.}
\]
\smallskip

{\bf Claim 1:} 
For all $i \geq 1$, there is some $d_i \in \NN$ so that the induced map
\[ \nu_{i,n}: \xymatrix{\alpha(R_{2i})\otimes T_n = U_i t^{2i} \otimes T_n \ar[r]^(0.52){1 \otimes {\tau}^{2i}} & U_i \otimes H^0(Y, \sT_n^{{\tau}^{2i}}) t^{2n+2i}
 \ar[rr]^{H^0(\beta_{i,n})}&& H^0(Y, \sT_i \otimes_Y \sT_n^{\tau^{2i}})t^{2n+2i}}\]
%\[ \nu_{i,n}: \xymatrix{\alpha(R_{2i}) \otimes T_n \ar[r]^(0.4){1 \otimes {\tau}^{2i}} & \alpha(R_{2i})\otimes H^0(Y, \sT_n^{{\tau}^{2i}}) t^{2n}
% \ar[rr]^{H^0(\beta_{i,n})\cdot t^{2n+2i}}&& H^0(Y, \sT_i \otimes_Y \sT_n^{\tau^{2i}})t^{2n+2i}}\]
is surjective for all $n \geq d_i$.

{\em Proof of Claim 1.}  
Since $\tau^{2i}$ gives an isomorphism between $T_n t^{-2n}= H^0(Y, \sT_n)$ and $H^0(Y, \sT_n^{\tau^{2i}})$, it suffices to show that $H^0(\beta_{i,n})$ is surjective for $n \gg 0$.

Considering \eqref{eqT1}, define $\sK_i$ so that
\[
\xymatrix{ 0 \ar[r] &  \sK_i \ar[r]& U_i \otimes \sO_Y \ar[r]^(0.6){\ev_Y}&  \sT_i \ar[r] &  0}\]
is exact.
At every $q \in Y$, either $\sT_i$ or $\sT_n^{{\tau}^{2i}}$ is locally free, since $i \geq 1$.
Thus $\shTor_1^Y(\sT_i, \sT_n^{{\tau}^{2i}}) = 0$ and so 
\[\xymatrix{
  0 \ar[r] & \sK_i \otimes_Y \sT_n^{{\tau}^{2i}} \ar[r]&  U_i \otimes\sT_n^{{\tau}^{2i}}  \ar[r]^{\beta_{i,n}} &  \sT_i\otimes_Y \sT_n^{{\tau}^{2i}} \ar[r] &  0}\]
is also exact.
There is thus an exact sequence
\[  \xymatrix{ U_i \otimes H^0(Y, \sT_n^{{\tau}^{2i}})\ar[rr]^{H^0(\beta_{i,n})} & &H^0(Y, \sT_i\otimes_Y \sT_n^{{\tau}^{2i}} ) \ar[r] &  H^1(Y, \sK_i \otimes_Y \sT_n^{{\tau}^{2i}})}, \]
and it suffices to show that 
$ H^1(Y, \sK_i \otimes_Y \sT_n^{{\tau}^{2i}})=0$ for $n \gg 0$.

Consider the inclusion $\sI_{L_0}\sM_n \subseteq \sT_n$; the factor is supported at $r_0$ since $L_0 \cap Y  = \{r_0\}$ (set-theoretically).  
Since $C_r$ is contained in the smooth locus of $X$, the Weil divisor $L_0$ on $X$ is locally principal at every point of $C_r$.  
Thus $\sI_{L_0} \sM_n$ is locally free on $Y$, and using the identification of $Y$ with $\PP^1_{\kk[\epsilon]}$ we see that $(\sI_{L_0} \sM_n)^{\tau^{2i}} \cong \sO(2n-1)$.
As $\sO(1)$ is ample on $\PP^1$, we may choose $d_i \geq 1$ so that $H^1(Y, \sK_i \otimes_Y (\sI_{L_0} \sM_n)^{{\tau}^{2i}}) = 0$ for all $n \geq d_i$.

Let $n \geq d_i$.  
Consider the natural map $\gamma: \sK_i \otimes_Y (\sI_{L_0} \sM_n)^{{\tau}^{2i}} \to \sK_i \otimes_Y \sT_n^{{\tau}^{2i}}$.  
The kernel and cokernel of $\gamma$ are supported on $r_0$, and so there is an exact sequence
\[
\xymatrix{ 
0 \ar[r]& \sE \ar[r] &\sK_i \otimes_Y (\sI_{L_0} \sM_n)^{{\tau}^{2i}} \ar[r]^{\hspace{.2in}\gamma} &\sK_i \otimes_Y \sT_n^{{\tau}^{2i}} \ar[r] & \sF \ar[r] & 0,
}\]
where $\sE$ and $\sF$ are 0-dimensional.  
Let $\mc{G}$ be the image of $\gamma$, which yields short exact sequences
\[ 0 \to \sE \to \sK_i \otimes_Y (\sI_{L_0} \sM_n)^{\tau^{2i}} \to \sG \to 0 \quad \quad \text{ and } \quad \quad
0 \to \sG \to \sK_i \otimes_Y \sT_n^{{\tau}^{2i}} \to \sF \to 0. \] 
From the first short exact sequence, we have an exact sequence in cohomology
\[ 
\xymatrix{
H^1(Y, \sK_i \otimes_Y (\sI_{L_0} \sM_n)^{{\tau}^{2i}}) \ar[r]  &H^1(Y, \sG) \ar[r] &H^2(Y, \sE).
}\]
By the choice of $n$, and the fact that $\sE$ has 0-dimensional support, we deduce that $H^1(Y, \sG)=0$.
Moreover, from the second short exact sequence, we have another exact sequence in cohomology
\[ 
\xymatrix{
H^1(Y, \sG) \ar[r] &H^1(Y, \sK_i \otimes_Y \sT_n^{{\tau}^{2i}}) \ar[r] &H^1(Y, \sF).
}\]
Since  $\sF$ has 0-dimensional support,   $H^1(Y, \sK_i \otimes_Y \sT_n^{{\tau}^{2i}})=0$, proving Claim 1.  
\smallskip

{\bf Claim 2:}
Recall that $d_i \in \mathbb{N}$ is the value so that the map $\nu_{i,n}$ from Claim~1 is surjective for all $n \geq d_i$.  
Let $N = \max(d_1+1, d_2+2,3)$.
 We claim that for $n \geq N$, we have that \[ T_n =  R_2 T_{n-1} + R_4 T_{n-2}.\]

{\em Proof of Claim 2.}
By choice of $n$ and Claim 1, we have
a surjection
\[ \xymatrix{
\alpha(R_2) \otimes T_{n-1} \ar@{->>}[r]^(0.4){\nu_{1,n-1}} & H^0(Y, \sT_1 \otimes_Y \sT_{n-1}^{\tau^2}) t^{2n}.}
\]
The image of $\nu_{1,n-1}$ is $R_2 T_{n-1} \subseteq T_n$, so we have $R_2 T_{n-1} = H^0(Y, \sT_1 \otimes_Y \sT_{n-1}^{\tau^2}) t^{2n}$.
Likewise, we have
$R_4 T_{n-2} = H^0(Y, \sT_2 \otimes_Y \sT_{n-2}^{\tau^4}) t^{2n}$.

Now, $\sT_2$ and $\sT_{n-2}$ are globally generated by  \eqref{eqT1}.  
%Since $\sT_1 \otimes_Y \sT_{n-1}^{{\tau}^2}= \sI_{r_0, r_2} \sM_n$
% we see that 
%$ R_2 T_{n-1} = H^0(Y, \sT_1 \otimes_Y \sT_{n-1}^{{\tau}^2})t^{2n} \subsetneqq H^0(Y, \sT_n)t^{2n}$ as $\sT_n$ is globally generated.
Thus  $\sT_2 \otimes_Y \sT_{n-2}^{{\tau}^4}= \sI_{r_0, r_4} \sM_n$ is also globally generated.  
In particular, there is a section that does not vanish at $r_2$.  
We thus have:
\beq\label{eq1} 
H^0(Y, \sI_{r_0, r_2} \sT_n )t^{2n} = R_2 T_{n-1} \subsetneqq R_2 T_{n-1} + R_4 T_{n-2} \subseteq T_n.
\eeq

There is a short exact sequence $0 \to \sI_{r_0,r_2}\sM_n \to \sI_{r_0}\sM_n \to \sO_{r_2} \to 0$, which yields  the  following exact sequence:
\[ 
\xymatrix{
0 \ar[r] &  H^0(Y, \sT_1 \otimes_Y \sT_{n-1}^{{\tau}^2}) \ar[r]^(0.6){\phi} &  H^0(Y, \sT_n) \ar[r]^{\psi} &  H^0(Y, \sO_{r_2}).
}\]
Now,  $\dim_{\kk} \im(\psi) \leq \dim_{\kk} H^0(Y, \sO_{r_2}) = 1$. Moreover, by~\eqref{eq1} $T_n \neq R_2T_{n-1}$, so $\phi$ is not an isomorphism. 
Hence 
\[ \dim_\kk R_2 T_{n-1} =  \dim_\kk H^0(Y, \sT_1 \otimes_Y \sT_{n-1}^{{\tau}^2}) = \dim_\kk T_n-1.\]
Now by~\eqref{eq1}, we have that  $T_n =  R_2 T_{n-1} + R_4 T_{n-2}$,
 as claimed.
\smallskip

{\bf Claim 3:}  As a left $R^{(2)}$-module, $T$ is finitely generated; in particular,   $T = R^{(2)} (T_{\leq N-1})$, for $N$ defined in Claim~2.
 
{\em Proof of Claim 3.}  This follows from Claim 2 and induction.  
\end{proof}

Now, we establish our main theorem.

\begin{proof}[Proof of Theorem~\ref{thm:main}]
We show that $U(W_+)$ is not noetherian. Hence, $U(W)$ is also not noetherian by Lemma~\ref{lem:liesubalg}.

By \cite[Proposition~2.2.17]{Dixmier}, it suffices to show that $U(W_+)$ is not left noetherian.   
By  Proposition~\ref{prop:rho} and Lemma~\ref{lem:AZ} it suffices to show that $R^{(2)}$ is not left noetherian.
By Theorem~\ref{thm:Tfingen}, it suffices to show that $T$ is not left noetherian.

Let $\sJ$ be the ideal sheaf on $Y$ that defines the reduced curve $C_r$. Note that $\sJ \subseteq \sI_{r_0}\sO_Y$.  
It is easy to see that $\sJ^\tau = \sJ$.
Let 
\[ J = \bigoplus_{n \geq 1} H^0(Y, \sJ \sM_n) t^{2n} \subseteq T.\]
We claim that $J$ is a non-finitely generated left ideal of $T$.  

If $n, k \geq 1$, then 
\[ T_n J_k \subseteq H^0(Y, (\sI_{r_0} \sM_n)\cdot(\sJ \sM_k)^{\tau^{2n}}) t^{2n+2k}
 = H^0(Y, \sI_{r_0} \sJ^{\tau^{2n}} \sM_{n+k}) t^{2n+2k} 
\subseteq H^0(Y, \sJ^{\tau^{2n}} \sM_{n+k}) t^{2n+2k} .
\]
This is $J_{n+k}$, since $\sJ^{\tau^{2n}} = \sJ$.  Thus $J$ is a left ideal of $T$.
(In fact, $J$ is a two-sided ideal, but we do not need the right ideal structure here.)

To see that $J$ is not finitely generated, first observe that $\sJ \sM_n$ is globally generated for all $n \geq 1$.
In fact, if $b_1, \dots, b_\ell \in H^0(Y, \sM_n) \subseteq \kk(Y)= \kk(s)[\epsilon]$ generate $\sM_n$, then
$\epsilon b_1, \dots, \epsilon b_{\ell}$ generate $\sJ \sM_n$.
Also, we have $\sI_{r_0} \sJ \subsetneqq \sJ$ since $\sJ/ \sI_{r_0}\sJ \cong \sJ \otimes_Y \sO_{r_0} \neq 0$.
 As a consequence, $H^0(Y, \sI_{r_0} \sJ \sM_n)  \subsetneqq H^0(Y, \sJ \sM_n)$ for all $n \geq 1$.
Since $T_n J_k \subseteq H^0(Y, \sT_n (\sJ \sM_k)^{{\tau}^{2n}})t^{2n+2k} = H^0(Y,   \sI_{r_0} \sJ \sM_{n+k})t^{2n+2k}$ for $n, k \geq 1$,
 we have 
$T (J_{\leq k}) \neq J$ for any $k \in \NN$.  
Thus, ${}_T J$ is not finitely generated and $T$ is not left noetherian.
\end{proof}

%%%%%%%%%%%%%%%%%%%%%%%%%%%%%%%%%%%%%%%
%%%%%%%%%%%%%%%%%%%%%%%%%%%%%%%%%%%%%%%
%%%%%%%%%%%%%%%%%%%%%%%%%%%%%%%%%%%%%%%

\section{The Gelfand-Kirillov dimension of $R$} \label{GKR}

In this section, we show that the Gelfand-Kirillov (GK) dimension of the ring $R$ (from Notation~\ref{not:R}) is 3. 
This result is not needed for the proof of Theorem~\ref{thm:main}, but is interesting in its own right.

Recall the notation from Section~\ref{GEOM}. We begin by showing that the automorphism $\tau$ acts trivially on $\Pic(X)$, the Picard group of $X$.

\begin{lemma}\label{lem:trivtau}
We have $\Pic(X) \cong \ZZ$.  As a result, $\tau$ acts trivially on $\Pic(X)$.
\end{lemma}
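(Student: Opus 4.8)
The plan is as follows. First I would compute $\Pic(X)$. Since $X = V(xz-y^2)\subset\PP^3$ is the projective cone over the conic $\PP^1\hookrightarrow\PP^2$ (equivalently the cone over the rational normal curve of degree $2$), it is the projective closure of the affine cone, and one knows $X$ is normal with a single singular point, the vertex $P$. Blowing up $P$ gives the Hirzebruch surface $\mathbb{F}_2$ with exceptional divisor the section of self-intersection $-2$; $X$ is obtained by contracting that section. A clean way to get $\Pic$: the smooth locus $X\smallsetminus P$ is the total space of a line bundle over $\PP^1$ (minus its zero section one gets an affine cone), so $\Cl(X\smallsetminus P)\cong\ZZ$, generated by a ruling line $L_i$; and since $X$ is normal, $\Cl(X)\cong\Cl(X\smallsetminus P)\cong\ZZ$ as well (removing the codimension-$2$ point $P$ does not change the class group of the normal variety $X$). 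For the Picard group, $\Pic(X)$ is the subgroup of $\Cl(X)\cong\ZZ\cdot[L]$ consisting of locally principal classes; by the discussion in Notation~\ref{not:L_i,D} (citing \cite[Examples~II.6.5.2 and II.6.11.3]{Hartshorne}) the class group of $\sO_{X,P}$ is $\ZZ/2\ZZ$ and $L$ itself is not locally principal at $P$, while $2L$ is. Hence $\Pic(X)=\ZZ\cdot[2L]\cong\ZZ$, generated by the hyperplane class $[\sO(1)|_X]=[L_0+L_1]=[\sL]$.

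Next I would deduce the action of $\tau$ on $\Pic(X)$. Any automorphism of a scheme acts on its Picard group, and since $\Pic(X)\cong\ZZ$ and the action is by a group automorphism of $\ZZ$, it must be either the identity or multiplication by $-1$. To rule out $-1$ it suffices to exhibit one $\tau$-invariant ample (or just effective non-torsion) class, or equivalently to note that $\tau$ is the restriction to $X$ of a linear automorphism of $\PP^3$: the matrix displayed in Notation~\ref{not1} shows $\tau$ extends to a projective linear map of $\PP^3$, so $\tau^*\sO_{\PP^3}(1)\cong\sO_{\PP^3}(1)$ and therefore $\tau^*(\sO(1)|_X)\cong\sO(1)|_X$. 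Since $\sO(1)|_X=\sL$ generates $\Pic(X)$, $\tau^*$ fixes a generator and so acts trivially on $\Pic(X)$. (Alternatively, and in keeping with the notation already set up: $\tau^{-1}(L_i)=L_{i+1}$ from Notation~\ref{not:L_i,D}, so $\tau^*[L_0+L_1]=[L_1+L_2]$, and $[L_1+L_2]=[L_0+L_1]$ in $\Pic(X)$ because $\Div(f_1)=\tau^{-1}(D)-L_1-L_2$ by Lemma~\ref{lem:div(f)} shows $L_1+L_2\sim\tau^{-1}(D)$, while $L_0+L_1\sim D$, and $D,\tau^{-1}(D)$ are both hyperplane sections hence linearly equivalent.)

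The main obstacle, such as it is, is purely bookkeeping: being careful that one is computing $\Pic$ and not merely $\Cl$, i.e. correctly identifying the locally-principal sublattice at the singular vertex $P$. Everything else (normality of $X$, the $\ZZ/2\ZZ$ local class group, the fact that $\tau$ comes from a linear map) is either standard or already recorded in the excerpt, so I would simply assemble these ingredients. I would lead with the $\Cl(X)\cong\ZZ$ computation, then cut down to $\Pic$ via the local class group at $P$, then invoke linearity of $\tau$ for the triviality of the action.
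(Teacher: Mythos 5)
Your proof is correct and follows essentially the same route as the paper: compute $\Cl(X)\cong\ZZ$ (the paper cites \cite[Exercise~II.6.3]{Hartshorne} where you redo the cone computation by hand), identify $\CaCl(X)\subset\Cl(X)$ as the index-$2$ subgroup generated by the hyperplane class via the local class group $\ZZ/2\ZZ$ at $P$, and conclude $\Pic(X)\cong\ZZ$. One small point in your favor: the paper's final step is just ``it follows that $\tau$ acts trivially,'' which silently needs a reason why $\tau^*$ cannot act by $-1$ on $\ZZ$; your observation that $\tau$ is the restriction of a linear automorphism of $\PP^3$, so $\tau^*\sL\cong\sL$ for the generator $\sL=\sO(1)|_X$, makes this explicit.
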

\begin{proof}
Let $\Cl(X)$ denote the group of Weil divisors on $X$ modulo the group of principal Weil divisors (cf. \cite[page~131]{Hartshorne}).
Let $\CaCl(X)$ denote the subgroup of $\Cl(X)$ consisting of the classes of locally principal Weil divisors. Note that by \cite[Exercise~II.6.3]{Hartshorne}, we have $\Cl(X) \cong \Cl(\PP^1) \cong \ZZ$.  
It is an easy exercise (cf. \cite[Exercise~II.6.3]{Hartshorne}) that $\Cl(X)$ is a free abelian group on the generator $L_0$.
%Now we study $\Cl(X)$ in more detail. Consider the natural map
%$\Cl(X)  \to \Cl(X \ssm P)$ given by
%$\sum n_i L_i  \mapsto \sum n_i (L_i \ssm P).$
%By \cite[Proposition~II.6.5(b)]{Hartshorne}, this is an isomorphism.
%Now, $\pi:  X \dra \PP^1$ restricts to a morphism $\pi':  X \ssm P \to \PP^1$.  
%By \cite[Exercise~II.6.3(a)]{Hartshorne}, $\pi'$ induces an isomorphism between $\Cl(\PP^1) \cong \ZZ$ and $\Cl(X \ssm P)$.  
%This isomorphism sends the generator $p_0$ of $\Cl(\PP^1)$ to $(\pi')^{-1} (p_0) = L_0 \ssm P$.  
%Thus, $\Cl(X)$ is a free abelian group on the generator $L_0$. 

Now, $L_0$ is not locally principal, so $L_0 \not \in \CaCl(X)$.  
On the other hand, the Weil divisor $2L_0$ is equal to $V(4x+4y+z)\cap X$
 and  is locally principal.
That is, $\CaCl(X)$ is generated by a hyperplane section of $X$ and is index 2 in $\Cl(X)$.  
In particular, $\CaCl(X) \cong \ZZ$.

By \cite[Proposition~II.6.15]{Hartshorne}, the map $\CaCl(X)  \to \Pic(X)$ given by $Z  \mapsto \sO_X(Z)$
is an isomorphism.
Thus $\Pic(X) \cong \ZZ$, and is generated by $\sO_X(2L_0)$.
It follows that $\tau$ acts trivially on $\Pic(X)$; this can also be seen by direct computation.
%LEAVE IN: \ff{Since $2L_0$, $L_0+L_1$, and $2L_1 = \tau^{-1} (2 L_0)$ are all hyperplane sections of $X$, we have
%$\sO_X(2L_0) \cong \sO_X(L_0+L_1) \cong \sO_X(2 L_1) = \tau^* \sO_X(2L_0)$.
%Thus $\tau$  acts trivially on $\Pic(X)$.  }
\end{proof}

We now use results of Keeler that relate the GK-dimension of a twisted homogeneous coordinate ring to the numeric properties of the automorphism.

\begin{proposition}\label{prop:GKB}
The GK-dimension of $B= B(X, \sL, \tau^2) $ is 3.
\end{proposition}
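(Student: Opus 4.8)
The plan is to invoke Keeler's theorem relating the GK-dimension of a twisted homogeneous coordinate ring $B(X,\sL,\sigma)$ to the growth of the intersection numbers of the iterated pullbacks $\sL^{\sigma}\otimes\cdots\otimes(\sigma^{n-1})^*\sL$, or equivalently to whether $\sigma$ is \emph{quasi-unipotent} (more precisely, $\sigma$-ample vs. not, and the polynomial vs. exponential growth dichotomy). Since $X$ is a projective surface, the standard result (Artin--Van den Bergh, Keeler) is that if $\sL$ is $\sigma$-ample then $\GKdim B = 1 + (\text{the degree of the polynomial } n \mapsto \dim_\kk H^0(X,\sL_n))$, and for a surface with $\sigma$ acting quasi-unipotently on $\Num(X)$ this dimension is $3$. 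So the first step is to record that $\tau$, hence $\tau^2$, acts trivially on $\Pic(X)\cong\Num(X)\cong\ZZ$ by Lemma~\ref{lem:trivtau}; in particular $\tau^2$ is (quasi-)unipotent on the Néron--Severi group, which is exactly the numeric hypothesis in Keeler's criterion.

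Next I would check that $\sL$ is $\tau^2$-ample. Because $\tau^2$ acts trivially on $\Pic(X)$, each $\sL_n = \sL\otimes_X(\tau^2)^*\sL\otimes_X\cdots\otimes_X(\tau^{2n-2})^*\sL$ has the same numerical class as $\sL^{\otimes n}$, and $\sL\cong\sO(1)|_X$ is ample on $X$; Keeler's numerical criterion for $\sigma$-ampleness (\cite{Keeler-ample} or the relevant result cited in \cite{S-surfclass}) then gives that $\sL$ is $\tau^2$-ample. Consequently $B(X,\sL,\tau^2)$ is a noetherian, finitely generated graded domain, and its Hilbert function is eventually $\dim_\kk H^0(X,\sL_n) = \chi(X,\sL_n)$ (higher cohomology vanishes for $n\gg 0$ by $\sigma$-ampleness). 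By asymptotic Riemann--Roch on the surface $X$, $\chi(X,\sL_n) = \tfrac12(\sL_n\cdot\sL_n) + O(n)$, and since $(\sL_n\cdot\sL_n) = n^2(\sL\cdot\sL) = n^2\deg X = 2n^2$, this is a quadratic polynomial in $n$. Hence $\GKdim B = 1 + 2 = 3$.

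The main obstacle — really the only nontrivial point — is making sure the numerical bookkeeping on the singular surface $X$ is legitimate: $X$ has a singular point $P$, so one must either work on a resolution or use the fact that intersection theory of Cartier divisors (which is all that appears, since $\sL_n$ is an honest invertible sheaf) is unproblematic, and that asymptotic Riemann--Roch still holds in the form $\chi(X,\sL^{\otimes n}) = \tfrac{n^2}{2}(\sL^2) + O(n)$ for a projective surface with an invertible sheaf. Alternatively, and perhaps more cleanly, one pulls $\sL$ back to the minimal resolution $\widetilde X$ (a Hirzebruch surface), where the class of the pullback of $\sO(1)|_X$ is big and nef, $\tau^2$ lifts to an automorphism acting trivially on $\Num(\widetilde X)$, and Keeler's theorem applies directly; then one notes $H^0(\widetilde X, (\text{pullback of }\sL_n)) = H^0(X,\sL_n)$ since $X$ is normal. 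Either way the growth is quadratic and the answer is $3$.

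Finally I would assemble these pieces: cite Lemma~\ref{lem:trivtau} for triviality on $\Pic(X)$, invoke Keeler's criterion to conclude $\sL$ is $\tau^2$-ample (so higher cohomology of $\sL_n$ eventually vanishes and $B$ is well-behaved), compute the leading term $(\sL_n)^2 = 2n^2$ via $\tau^2$-invariance of numerical classes and $\deg X = 2$, and apply Keeler's formula $\GKdim B(X,\sL,\sigma) = 1 + \deg(n\mapsto\dim_\kk H^0(X,\sL_n))$ to get $\GKdim B = 3$. I expect the write-up to be short, with the one subtlety being the explicit citation for $\sigma$-ampleness and GK-dimension of twisted homogeneous coordinate rings of surfaces, which is precisely the machinery borrowed from \cite{S-surfprop, S-surfclass}.
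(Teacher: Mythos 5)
Your proposal is correct and takes essentially the same route as the paper: Lemma~\ref{lem:trivtau} gives triviality of $\tau^2$ on $\Anum(X)$, Keeler's Theorem~1.2 then gives $\tau^2$-ampleness, and Keeler's GK-dimension formula yields $\GKdim B = \dim X + 1 = 3$. The only differences are minor: you unpack the final step via asymptotic Riemann--Roch and worry (reasonably, but unnecessarily) about the singular point, whereas the paper just cites Keeler's Theorem~6.1(2) directly; and you omit the paper's initial reduction to $\kk$ algebraically closed, which is needed since Keeler's results are stated over an algebraically closed field.
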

\begin{proof}
For any extension field $\kk \subseteq \kk'$, we have $\GKdim_{\kk} B = \GKdim_{\kk'} (B\otimes_\kk \kk')$, so it suffices to assume that $\kk$ is algebraically closed.
 
As in \cite{Keeler}, let $\Anum(X)$ be the group of (Cartier) divisors on $X$ modulo numerical equivalence.
The action of $\tau$ on $\Pic(X)$ induces a numeric action on $\Anum(X)$, which is  trivial by Lemma~\ref{lem:trivtau}.
Thus $\tau^2$ also has a  trivial action  on $\Anum(X)$.

Since $\sL$ is ample, by \cite[Theorem~1.2]{Keeler}, $\sL$ is {\em $\tau^2$-ample}. 
(We do not define the term here; informally, the twisted tensor powers $\sL_n$ of $\sL$ have the same positivity properties as the ordinary tensor powers of an ample line bundle.)
By \cite[Theorem~6.1(2)]{Keeler}, therefore, $\GKdim B = \dim X + 1 = 3$.
\end{proof}

Next, we see that $R^{(2)}$ is  a {\em big} subalgebra of $B$.  Informally,  the  rings $R^{(2)}$ and $B$ are birational to each other. 
 More formally, we show that the rational functions in $V_n$ generate $\kk(X)$ as a field for $n \gg 0$.

\begin{lemma}\label{lem:quotient}
Recall Notations~\ref{not1} and \ref{not:Vn}. For every $n \geq 4$, the rational functions in $V_n$ generate $\kk(X)$ as a field.
\end{lemma}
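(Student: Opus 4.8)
The plan is to exhibit an explicit element of $V_n$ whose zero-pole behavior already pins down enough of the geometry of $X$, and then find a second element so that their ratio is a non-constant rational function that, together with the field of rational functions on $\PP^1$ pulled back via $\pi$, generates all of $\kk(X)$. Recall that $\kk(X)$ has transcendence degree $2$ over $\kk$, and that $\pi: X \dashrightarrow \PP^1$ gives a subfield $\pi^*\kk(\PP^1) = \kk(g)$ for a single rational function $g$ (one may take $g = x/y = y/z$ in the notation of Notation~\ref{not:tildeX}). Since $f = (w+12x+22y+8z)/(12x+6y) \in \kk(X)$ is plainly not in $\pi^*\kk(\PP^1)$ — it involves the coordinate $w$, which is not a function of $[x:y:z]$ on the cone — the field $\kk(\pi^*\kk(\PP^1), f)$ has transcendence degree $2$ and hence is a finite extension of, and in fact equal to (by checking that $f$ generates the remaining degree of freedom, e.g.\ that $w$ is recovered rationally from $f$ and $\pi$) all of $\kk(X)$. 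So it suffices to produce elements of $V_n$ whose pairwise ratios generate the subfield $\kk(\pi^*\kk(\PP^1), f)$.

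Concretely, first I would recall from the proof of Lemma~\ref{lem:subalg} that $V_n$ (for $n$ even, say $n = 2m$) is spanned by products $f_{i_1}\cdots f_{i_k}$ with $i_1 \geq 0$, $i_a \leq i_{a+1}-2$, and $i_k \leq n-2$; for $n$ odd a parallel description holds since $R$ is generated in degrees $1$ and $2$. The key observation is that for $n \geq 4$ we have enough room to choose two such monomials $m_1, m_2$ with the same "shape" except for a single shifted index, so that the ratio $m_1/m_2$ equals $f_i/f_j = f^{\tau^i}/f^{\tau^j}$ for suitable $i \neq j$. Since $\tau$ has infinite order on $\kk(X)$ and acts on $\PP^1$ via $\sigma$ with a single fixed point, the functions $f, f^{\tau}, f^{\tau^2}, \dots$ are "generic enough" that already $\kk(f_i/f_j : i,j)$ recovers $f$ and the pullback field $\pi^*\kk(\PP^1)$. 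I would make this precise by computing $f$ and $f^{\tau}$ (or $f^{\tau}/f$) explicitly as rational functions of $w,x,y,z$ using the matrix for $\tau$ from Notation~\ref{not1}, and observing directly that these two functions generate $\kk(X)$: their common refinement of numerators and denominators involves $w$ linearly and the $[x:y:z]$-coordinates, which is all of $\kk(X)$. This is a finite, if slightly tedious, computation, best confirmed by the same Macaulay2 routine already invoked for Proposition~\ref{prop:rho}.

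The steps in order: (1) describe the spanning monomials of $V_n$ for $n \geq 4$ and note that among them one finds, for appropriate small index sets, both $f_0 f_2$ and $f_0 f_3$-type products (degree-$5$ monomials) scaled by extra factors of $t^2 = \rho(e_1^2)$ to reach any $n \geq 4$ — more carefully, for $n$ even take products of the form $f_{i_1}\cdots f_{i_k}$ realizing the needed ratios, and for $n$ odd use that $V_n \ni f_{i_1}\cdots f_{i_{k}}$ with the index-count parity adjusted; (2) extract from two such monomials a ratio equal to $f_a/f_b$ for a concrete pair $a \neq b$ with $a, b$ bounded by $n-2$; (3) compute $f_a, f_b$ explicitly via $\tau^a, \tau^b$ and verify $\kk(f_a, f_b) = \kk(X)$, or equivalently $\kk(f_a/f_b, \text{one of them}) = \kk(X)$; (4) conclude that the subfield of $\kk(X)$ generated by $V_n$ — which contains all such ratios and at least one $f_i$ itself (as $V_1 = \kk t$ forces... actually $V_2 \ni f$ directly when $n=2$, and for $n \geq 4$ one has $f_0 f_2 \cdots \in V_n$, so quotienting by another element of $V_n$ gives $f_0$ up to a product of other $f_i$'s; iterating recovers each $f_i$) — is all of $\kk(X)$.

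The main obstacle I anticipate is the bookkeeping in step (2): ensuring that for \emph{every} $n \geq 4$ (and in particular handling even versus odd $n$ uniformly) one can write down two legitimate spanning monomials of $V_n$ whose ratio is exactly $f_a/f_b$ with small, explicit $a,b$, rather than a longer product of $f_i$'s that would complicate the field-generation argument. One clean way around this: show first that $\kk(X)$ is generated by $f_0, f_2$ alone (two explicit functions), then note that for $n \geq 4$ both $f_0 f_2 \cdot(\text{something in } V_{n-5})$ — wait, $n - 5$ may be negative — so instead observe $V_4 \ni f_0 f_2 \cdot$?; more robustly, for $n \geq 4$ even, $V_n$ contains $f_0 \cdot f_2 \cdot f_4 \cdots$ and also $f_1 \cdot f_3 \cdots$, and ratios of these with judicious cancellation give $f_i/f_{i+1}$; combined with the $n=2$ and the explicit computation this yields the claim. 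The genuinely non-routine content is the single explicit verification that two shifted copies of $f$ generate $\kk(X)$ over $\kk$, which reduces to a rank computation on the coefficient matrices and should be dispatched with the Macaulay2 appendix routine.
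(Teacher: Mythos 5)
Your overall strategy---reduce to showing a few explicit shifts $f_i$ generate $\kk(X)$, then verify by direct computation---is the right one, and is essentially what the paper does. But the central verification your plan rests on is \emph{false}: no two of the functions $f_i, f_j$ generate $\kk(X)$. To see this, dehomogenize on the chart $z \neq 0$ with $a = y/z$, $b = w/z$ (so $x/z = a^2$ on $X$ and $\kk(X) = \kk(a,b)$). One computes
\[
f_0 \;=\; \frac{b + 12a^2 + 22a + 8}{6a(2a+1)},
\qquad
f_1 \;=\; -\,\frac{b + 6a^2 + 10a + 2}{6a}.
\]
Eliminating $b$ from $f_0 = u$, $f_1 = v$ yields $(2u - 1)a^2 + (u + v - 2)a - 1 = 0$, so $a$ is generically degree $2$ over $\kk(f_0, f_1)$ and hence $[\kk(X) : \kk(f_0, f_1)] = 2$. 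The same holds for $\kk(f_0, f_2)$ (and, applying powers of $\tau$ and repeating the elimination, for every pair $f_i, f_j$); geometrically, the generic members of the two pencils of hyperplane sections on the quadric cone $X$ meet in two points away from the base loci, so the rational map $(f_i, f_j): X \dra \PP^1 \times \PP^1$ has degree $2$. So step (3) of your plan would never succeed, and the Macaulay2 check you anticipate would refute the claim rather than confirm it. This is also exactly why the lemma requires $n \geq 4$: one checks $V_3 = \operatorname{span}\{1, f_0, f_1\}$, and $\kk(f_0, f_1)$ is a proper (index-$2$) subfield.

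The bookkeeping worry you raise in step (2) is, by contrast, a non-issue: since $R_1 = \kk t$ we have $V_1 = \kk$, so $V_n \supseteq V_4$ for all $n \geq 4$ regardless of parity, and $V_4 = \operatorname{span}\{1, f_0, f_1, f_2, f_0 f_2\}$ already contains $f_0, f_1, f_2$ directly (no need to manufacture them as ratios). The genuine content of the lemma is that \emph{three} shifts suffice. The paper exhibits explicit rational expressions in $f_0, f_1, f_2$ equal to $y/z$ and $(2x+5y+2z)/w$, and then shows these two functions generate $\kk(X)$ by writing down an explicit birational map $X \dra \PP^1 \times \PP^1$ and its inverse. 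Dropping $f_1$---as your plan does---leaves you one degree $2$ short.
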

\begin{proof}
For $n \geq 4$, we have $V_n \supseteq V_4 V_1^{n-4}$.  Since $V_1 = \kk$, the latter is $V_4$.
Thus, it suffices to show that $V_4$ generates $\kk(X)$ as a field.
To do this, we compute: 

{\small
\[ \frac{8 f_0 f_2+4f_0-4f_2}{-4f_0f_1-4f_0f_2-6f_0+4f_1+2f_2} = \frac{y}{z}, 
\hspace{.4in}
\frac{f_0f_1-f_0-f_1}{-6f_0^2 f_2 -3 f_0^2-4f_0f_1+5 f_0 f_2 - 3f_0+4 f_1-f_2} = \frac{2x+5y+2z}{w}.\]}

\noindent 
See Routine~A.2 in the appendix for Macaulay2 calculations confirming these computations.

We claim that these two rational functions generate $\kk(X)$.  
To see this, define rational maps
\[\alpha: X  \dra \PP^1 \times \PP^1, \quad [w:x:y:z]  \mapsto [y:z][2x+5y+2z:w],\]
\[ \beta: \PP^1\times \PP^1  \dra X, \quad  [a:b][c:d]  \mapsto [d(2a^2 + 5ab+ 2b^2): c a^2: cab:cb^2].\]
We have that
\begin{align*}
\beta\alpha([w:x:y:z]) &= [ w(2y^2+5yz+2z^2):(2x+5y+2z)y^2:(2x+5y+2z)yz:(2x+5y+2z)z^2]  \\
 &=[w(2xz+5yz+2z^2):(2x+5y+2z)xz:(2x+5y+2z)yz:(2x+5y+2z)z^2],
\end{align*}
using the relation $xz=y^2$. If  $z(2x+5y+2z)\neq 0$, this is $ [w:x:y:z]$.
Also, if $cb(2a^2+5ab+2b^2) \neq 0$, 
then
\[ \alpha\beta([a:b][c:d]) = [cab:cb^2][2ca^2+5cab+2cb^2:d(2a^2+5ab+2b^2)] = [a:b][c:d].\]
Thus $\beta = \alpha^{-1}$ on an open set, so the two are equal as rational maps.   
Therefore, it suffices to show that $\beta^*(y/z) = a/b$ and $\beta^*((2x+5y+2z)/w) = c/d$ generate $\kk(\PP^1\times \PP^1)$, which is obviously true.
\end{proof}

We now use the following result of Rogalski.
\begin{theorem}\label{thm:Dan}
\cite[Theorem~1.1]{R-GK}
Let $K$ be a finitely generated field of transcendence degree 2 over an algebraically closed field $\kk$, and let $\phi \in \Aut_{\kk}(K)$.
Let $S \subseteq K[t; \phi]$ be a locally finite $\NN$-graded subalgebra so that for some $n \in \NN$ and $u \in S_n$, the algebra $\kk[S_n u^{-1}] $ has quotient field $K$.
Then, $\GKdim S \geq 3$. \qed
\end{theorem}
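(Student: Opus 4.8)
The plan is to reduce this statement to the Artin--Stafford structure theory for graded domains of quadratic growth. Since $\GKdim$ does not increase on passing to a subalgebra, it suffices to prove $\GKdim A \geq 3$ for $A := \kk\langle S_n\rangle$, the subalgebra generated by $S_n$. After regrading so that $S_n$ lies in degree $1$, the algebra $A$ is connected graded, generated in degree $1$, and finitely generated as a $\kk$-algebra (as $\dim_\kk S_n < \infty$); it is a domain, being a subring of the domain $K[t;\phi]$. If $\GKdim S = \infty$ there is nothing to prove, so we may assume $\GKdim A < \infty$ throughout.

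The hypothesis on $u$ serves to identify the ``function field'' of $A$. Write $S_m = V_m t^m$ with $V_m \subseteq K$ a finite-dimensional $\kk$-subspace, and $u = g t^n$ with $g \in V_n\setminus\{0\}$. Working in the skew Laurent ring $K[t^{\pm 1};\phi]$, one has $u^{-1} = t^{-n}g^{-1}$ and hence $(\alpha t^n)u^{-1} = \alpha g^{-1}$ for $\alpha \in V_n$; thus $S_n u^{-1} = V_n g^{-1} \subseteq K$, the algebra $\kk[S_n u^{-1}] = \kk[V_n g^{-1}]$ contains $1$, and its fraction field is the subfield of $K$ generated by all ratios $\alpha/\beta$ with $\alpha,\beta \in V_n$; by hypothesis this equals $K$. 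Since $A$ is a graded domain of finite GK-dimension, its nonzero homogeneous elements form an Ore set, so the graded quotient ring $Q := Q_{\mathrm{gr}}(A)$ exists; every nonzero element of $A_1$ is a regular element of degree $1$, hence a unit in $Q$, so $Q \cong D[z^{\pm 1};\alpha]$ with $D := Q_0$ a division ring. The same computation as above shows that any ratio $c d^{-1}$ of homogeneous elements of $A$ of equal degree lies in $K$, so $D \subseteq K$; and $D$ contains all the ratios $\alpha/\beta$ with $\alpha,\beta \in V_n$. Hence $D = K$, which has transcendence degree $2$ over $\kk$.

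Now suppose toward a contradiction that $\GKdim A \leq 2$. By the structure theory of graded domains of GK-dimension at most $2$ --- Small--Warfield (with Bergman's gap theorem) in the linear-growth case $\GKdim A \leq 1$, and the Artin--Stafford classification of graded domains of quadratic growth when $\GKdim A = 2$ --- the division ring $D$ is module-finite over a subfield of transcendence degree at most $1$ over $\kk$. Hence $\operatorname{trdeg}_\kk D \leq 1$, contradicting $D = K$. Therefore $\GKdim A \geq 3$, and so $\GKdim S \geq \GKdim A \geq 3$.

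The entire substance of the argument is concentrated in the Artin--Stafford input; everything else is bookkeeping with $K[t^{\pm 1};\phi]$. A self-contained alternative would bound $\dim_\kk\bigl(V_n\,\phi^n(V_n)\cdots\phi^{(k-1)n}(V_n)\bigr)$ from below by a quadratic function of $k$ directly: choose a projective surface $Y$ with $\kk(Y) = K$ on which some power of $\phi$ acts biregularly, realize $V_n$ as a subspace of $H^0(Y,\sO_Y(Z))$ for a divisor $Z$ whose associated rational map is birational onto its image, and apply asymptotic Riemann--Roch to the divisors $Z + \phi^n(Z) + \cdots + \phi^{(k-1)n}(Z)$. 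I expect the real obstacle there --- and the technical core of \cite{R-GK} --- to be that a birational self-map of a surface need not be conjugate to a biregular automorphism of any projective model, so the geometric realization of $\phi$ must be engineered by blow-ups or else bypassed.
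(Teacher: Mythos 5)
The paper quotes this theorem from Rogalski (cited as [R-GK, Theorem 1.1]) and closes the statement with a \qed; no proof is given, so there is no in-paper argument for you to match against.

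That said, your reduction to Artin--Stafford is a sound strategy and, modulo two points to pin down, it does close the gap. The preparatory steps are all correct: passing to $A = \kk\langle S_n\rangle$ and regrading, identifying $S_nu^{-1}$ with $V_ng^{-1} \subseteq K$ inside $K[t^{\pm 1};\phi]$, invoking Borho--Kraft/graded-Ore to obtain $Q_{\mathrm{gr}}(A) \cong D[z^{\pm 1};\alpha]$, and the computation $D = K$. The two points to nail down in the final step are these. First, the Artin--Stafford structure theorems are stated for connected graded domains generated in degree one with \emph{quadratic growth} (i.e.\ $\dim_\kk A_n$ bounded above by a linear function of $n$), not literally the hypothesis $\GKdim A = 2$; for such domains the two conditions are in fact equivalent, but that equivalence is itself a gap-type lemma proved in their paper and needs to be invoked explicitly rather than elided. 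Second, their cleanest classification theorem assumes $A$ noetherian; what you need is the non-noetherian variant, which places $A$ (up to finite data) inside a twisted homogeneous coordinate ring of a projective curve, and it is from \emph{that} embedding that $\operatorname{trdeg}_\kk D \le 1$ follows. With those two references made precise, the contradiction with $D = K$ (transcendence degree $2$) is immediate, and the $\GKdim A \le 1$ case is disposed of, as you say, by Bergman's gap theorem together with Small--Stafford--Warfield.

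Your closing paragraph correctly identifies the self-contained geometric alternative and its real obstacle --- that the birational self-map $\phi$ need not be conjugate to a biregular automorphism of a projective model --- which is indeed what any direct Riemann--Roch growth estimate must confront. Whether [R-GK] argues via Artin--Stafford or via this geometric route is not something the present paper records, but either path is legitimate.
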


\begin{corollary}\label{cor:dimR}
The GK-dimension of $R$ is 3.
\end{corollary}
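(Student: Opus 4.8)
The plan is to deduce Corollary~\ref{cor:dimR} by sandwiching $\GKdim R$ between $3$ and $3$. For the upper bound, I would first observe that $R$ is a subalgebra of $\kk(X)[t;\tau]$, and moreover $R^{(2)} \subseteq B = B(X,\sL,\tau^2)$ by Lemma~\ref{lem:subalg}. Since $B$ has GK-dimension $3$ by Proposition~\ref{prop:GKB}, and GK-dimension does not increase when passing to a subalgebra, we get $\GKdim R^{(2)} \leq 3$. Finally, $R$ is a finitely generated module over... well, actually the cleanest route is that for an $\NN$-graded algebra, $\GKdim R = \GKdim R^{(2)}$ (the Veronese has the same growth up to rescaling the index), so $\GKdim R \leq 3$.

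For the lower bound, I would apply Theorem~\ref{thm:Dan} (Rogalski's result) directly to $S = R \subseteq K[t;\tau]$ with $K = \kk(X)$. To do this I need to check the hypotheses: $K = \kk(X)$ is a finitely generated field of transcendence degree $2$ over $\kk$ (true, since $X$ is a rational surface), and after base-changing to the algebraic closure $\bar\kk$ if necessary (which changes neither $\GKdim$ nor the transcendence degree — the same device used at the start of the proof of Proposition~\ref{prop:GKB}), $\tau$ is an automorphism of $K$. The key remaining hypothesis is that for some $n$ and some $u \in R_n$, the algebra $\kk[R_n u^{-1}] = \kk[V_n]$ has quotient field $K$. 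This is exactly the content of Lemma~\ref{lem:quotient}: for $n \geq 4$, the rational functions in $V_n$ generate $\kk(X)$ as a field, so taking $u$ to be any nonzero element of $R_n$ (e.g. $u = t^n = \rho(e_1^n)$, so that $R_n u^{-1} = V_n$), the field generated by $V_n$ is $K$. Hence Theorem~\ref{thm:Dan} gives $\GKdim R \geq 3$.

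Combining the two bounds yields $\GKdim R = 3$. The main thing to be careful about — and really the only subtlety — is the base-change to an algebraically closed field required by Theorem~\ref{thm:Dan}: I would note that $\GKdim_\kk R = \GKdim_{\bar\kk}(R \otimes_\kk \bar\kk)$, that $R \otimes_\kk \bar\kk \subseteq \bar\kk(X)[t;\tau]$ with $\bar\kk(X)$ still of transcendence degree $2$ and finitely generated over $\bar\kk$, and that $V_n \otimes_\kk \bar\kk$ still generates $\bar\kk(X)$ as a field since the generating identities in Lemma~\ref{lem:quotient} are defined over $\kk$. Everything else is a direct citation of results already in hand, so there is no genuine obstacle here; the corollary is essentially immediate.

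\begin{proof}
By Lemma~\ref{lem:subalg}, $R^{(2)} \subseteq B = B(X, \sL, \tau^2)$, and $\GKdim B = 3$ by Proposition~\ref{prop:GKB}.  Since $\GKdim R = \GKdim R^{(2)} \leq \GKdim B$, we have $\GKdim R \leq 3$.

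For the reverse inequality, we may assume $\kk$ is algebraically closed: for any extension $\kk \subseteq \kk'$ we have $\GKdim_\kk R = \GKdim_{\kk'}(R \otimes_\kk \kk')$, and $R \otimes_\kk \kk' \subseteq \kk'(X)[t;\tau]$ where $\kk'(X) = \kk(X) \otimes_\kk \kk'$ remains a finitely generated field of transcendence degree $2$ over $\kk'$.  Now apply Theorem~\ref{thm:Dan} with $K = \kk(X)$, $\phi = \tau$, and $S = R$.  The field $K$ is finitely generated of transcendence degree $2$ over $\kk$ since $X$ is a surface, and $R$ is a locally finite $\NN$-graded subalgebra of $K[t;\tau]$.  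Taking $n = 4$ and $u = t^4 = \rho(e_1^4) \in R_4$, we have $R_4 u^{-1} = V_4$, and by Lemma~\ref{lem:quotient} the algebra $\kk[V_4]$ has quotient field $K$.  Hence $\GKdim R \geq 3$ by Theorem~\ref{thm:Dan}, and therefore $\GKdim R = 3$.
\end{proof}
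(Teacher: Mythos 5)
Your proof is correct and takes essentially the same route as the paper: apply Rogalski's theorem (Theorem~\ref{thm:Dan}) with $n=4$ and $u=t^4$, using Lemma~\ref{lem:quotient} for the lower bound, and use $R^{(2)}\subseteq B$ together with Proposition~\ref{prop:GKB} for the upper bound. The one place where you are more informal than the paper is the assertion $\GKdim R=\GKdim R^{(2)}$: the parenthetical ``the Veronese has the same growth up to rescaling'' is not true for arbitrary $\NN$-graded algebras, and the paper instead justifies it by noting that $R$ is a domain, so $R$ embeds as an $R^{(2)}$-module into $R^{(2)}\oplus R^{(2)}$ (splitting into even and odd parts and multiplying the odd part by a nonzero element of $R_1$), whence $\GKdim R\le\GKdim R^{(2)}$; you should include that short argument.
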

\begin{proof}
As in the proof of Proposition~\ref{prop:GKB}, it suffices to assume that $\kk$ is algebraically closed.
Recall we have $R \subseteq \kk(X)[t; \tau] $.
By Lemma~\ref{lem:quotient}, $\kk(X) $ is the quotient field of $\kk[R_4 t^{-4}]$.  
Since $t^4 \in R_4$,   by Theorem~\ref{thm:Dan} $\GKdim R \geq 3$.

Since $R$ is  a domain, as $R^{(2)}$-modules $R$ embeds into $R^{(2)} \oplus R^{(2)}$.  Thus $\GKdim R= \GKdim R^{(2)}$.
Since $R^{(2)} \subseteq B$ by Lemma~\ref{lem:subalg}, we have $ \GKdim R= \GKdim R^{(2)} \leq \GKdim B = 3$ by Proposition~\ref{prop:GKB}. 
\end{proof}

%%%%%%%%%%%%%%%%%%%%%%%%%%%%%%%%%%%%%%%
%%%%%%%%%%%%%%%%%%%%%%%%%%%%%%%%%%%%%%%
%%%%%%%%%%%%%%%%%%%%%%%%%%%%%%%%%%%%%%%
\section{Further consequences} \label{CONSEQ}

As a consequence of Theorem~\ref{thm:main}, we verify in this section that many other infinite dimensional Lie algebras satisfy Conjecture~\ref{conj:main}. The first of these is a central extension of the Witt algebra.

\begin{definition} \label{def:Vir}
The {\it Virasoro algebra} $V$ is defined to be the Lie algebra $V$ with basis $\{e_n\}_{n \in \mathbb{Z}} \cup \{c\}$ and Lie bracket $[e_n,c]= 0$, $[e_n, e_m] = (m-n)e_{n+m} + \frac{c}{12}(m^3 - m) \delta_{n+m,0}$. 
\end{definition}

The Virasoro algebra is ubiquitous in modern physics, particularly in statistical mechanics and string theory. Most notably, its representations play a major role in 2-dimensional conformal field theory (CFT).

\begin{proposition}\label{prop:Virasoro}
The universal enveloping algebra $U(V)$ of $V$ is not noetherian.
\end{proposition}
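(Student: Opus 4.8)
The plan is to deduce the non-noetherianity of $U(V)$ directly from Theorem~\ref{thm:main} by exhibiting $U(W_+)$ as a subquotient-like object inside $U(V)$ whose noetherianity would be forced. The key observation is that the Witt algebra $W$ sits inside the Virasoro algebra $V$ not as a Lie subalgebra, but as a quotient: the center $\kk c$ is an ideal of $V$, and $V/\kk c \cong W$. Correspondingly, $U(V)/(c)U(V) \cong U(W)$, where $(c)$ is the two-sided ideal generated by the central element $c$. Since a quotient of a (left or right) noetherian ring is again noetherian, if $U(V)$ were noetherian then $U(W)$ would be noetherian, contradicting Theorem~\ref{thm:main}.

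First I would record the isomorphism $U(V)/(c) \cong U(W)$; this is immediate from the presentation of $U(V)$ by generators and relations, since killing $c$ turns the relation $[e_n,e_m] = (m-n)e_{n+m} + \tfrac{c}{12}(m^3-m)\delta_{n+m,0}$ into the Witt relation $[e_n,e_m]=(m-n)e_{n+m}$, and the $[e_n,c]=0$ relations become vacuous. Alternatively, one may invoke the fact that for a central ideal $\mathfrak z$ of a Lie algebra $L$, one has $U(L)/(\mathfrak z) \cong U(L/\mathfrak z)$, applied to $\mathfrak z = \kk c \subseteq V$. Next, I would note that $U(W)$ is not left (or right) noetherian: this is the second assertion of Theorem~\ref{thm:main}. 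Finally, since a homomorphic image of a left noetherian ring is left noetherian \cite[Corollary~1.7.4]{MR} or by an elementary argument, $U(V)$ cannot be left noetherian; by \cite[Proposition~2.2.17]{Dixmier} (or just by applying the same argument on the right), $U(V)$ is not right noetherian either.

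There is essentially no obstacle here: the entire content has been pushed into Theorem~\ref{thm:main}, and what remains is the standard fact that central quotients of enveloping algebras are enveloping algebras of quotient Lie algebras, together with the trivial observation that quotients of noetherian rings are noetherian. The only point requiring a word of care is the left-right symmetry, which is handled uniformly for all enveloping algebras by the anti-isomorphism $U(L) \cong U(L)^{\mathrm{op}}$ of \cite[Proposition~2.2.17]{Dixmier}, as was already used in the proof of Lemma~\ref{lem:liesubalg}. This same strategy — realize a known non-noetherian enveloping algebra as a central quotient, or (via Lemma~\ref{lem:liesubalg}) as a sub-enveloping-algebra — is what will be reused to handle the remaining cases of Corollary~\ref{cor:main}.
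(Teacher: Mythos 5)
Your proposal is correct and takes exactly the same route as the paper: observe that $U(V)/(c) \cong U(W)$ and invoke Theorem~\ref{thm:main} together with the fact that quotients of noetherian rings are noetherian. You have simply spelled out in more detail the isomorphism and the left--right symmetry, both of which the paper leaves implicit.
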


\begin{proof}
Observe that the factor $U(V)/(c)$ of $U(V)$ is isomorphic to $U(W)$. Then, apply Theorem~\ref{thm:main}.
\end{proof}

Next, we show that  central factors  of $U(V)$ also fail to be noetherian. Namely, let $\lambda \in \kk$ and define $U_\lambda := U(V)/(c-\lambda)$.  The value $\lambda \in \kk$ is known as the {\it central charge}, which has significance in CFT.

\begin{corollary}\label{cor:Ulambda}
The algebra $U_{\lambda}$ is not noetherian for any $\lambda \in \kk$. 
\end{corollary}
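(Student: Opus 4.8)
The plan is to adapt the structure of the proof of Proposition~\ref{prop:Virasoro}, but now we cannot simply pass to a quotient isomorphic to $U(W)$, since killing $c-\lambda$ for $\lambda \neq 0$ does \emph{not} yield $U(W)$. Instead, I would work with the positive part. Observe that $W_+$, having basis $\{e_n\}_{n\geq 1}$, does \emph{not} meet the central element $c$ in any nontrivial way: the subalgebra of $V$ generated by $\{e_n\}_{n \geq 1}$ is isomorphic to $W_+$ because the cocycle term $\frac{c}{12}(m^3-m)\delta_{n+m,0}$ only appears when $n+m=0$, which never happens for $n,m \geq 1$. Thus $W_+$ embeds as a Lie subalgebra of $V$, and hence into $V/(c-\lambda)\kk$ as well — indeed, since $c$ maps to the scalar $\lambda$, which is central, the image of $\{e_n\}_{n\geq 1}$ in $U_\lambda$ still satisfies exactly the Witt relations $[e_n,e_m]=(m-n)e_{n+m}$ for $n,m\geq 1$.

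The key step is then to verify that $U(W_+)$ maps into $U_\lambda$ with the PBW-freeness needed to invoke Lemma~\ref{lem:liesubalg}, or rather an analogue of it. More carefully: let $L'$ be the Lie subalgebra of $V$ spanned by $\{e_n\}_{n\geq 1}$, so $L' \cong W_+$. Extend a basis of $L'$ to a basis of $V$ (adding $\{e_n\}_{n\leq 0}$ and $c$), order it so that every element of $L'$ is larger than every element outside $L'$, and apply PBW exactly as in the proof of Lemma~\ref{lem:liesubalg}: $U(V)$ is free as a right $U(L')$-module, hence faithfully flat, so if $U(V)/(c-\lambda)$ were noetherian we could... — but here I must be careful, because passing to the quotient $U_\lambda$ breaks the flatness argument over $U(L')$ directly. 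The cleaner route: the composite $U(W_+) \cong U(L') \hookrightarrow U(V) \twoheadrightarrow U_\lambda$ is injective (since $c-\lambda$ is a nonzerodivisor lying in the complement of the PBW monomials supported on $L'$, a PBW basis of $U(L')$ remains linearly independent modulo $(c-\lambda)$), and in fact $U_\lambda$ is free as a right $U(W_+)$-module: a PBW basis of $U_\lambda$ is $\{(\text{monomials in }e_n, n\leq 0)\cdot(\text{monomials in }e_n, n\geq 1)\}$ with no $c$ appearing, and this exhibits $U_\lambda$ as free over $U(W_+)$ with basis the monomials in $\{e_n : n \leq 0\}$. Faithful flatness then follows as in Lemma~\ref{lem:liesubalg}, and \cite[Exercise~17T]{GW} gives that if $U_\lambda$ is left noetherian then so is $U(W_+)$. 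By Theorem~\ref{thm:main}, $U(W_+)$ is not left noetherian, a contradiction; the left-right symmetry of the enveloping-algebra argument (or directly $U(W_+)$ not right noetherian, with $U_\lambda$ free as a \emph{left} $U(W_+)$-module via the mirror-image PBW ordering) finishes the claim.

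The main obstacle I anticipate is precisely the verification that $U_\lambda$ is faithfully flat — or at least that noetherianity descends — over the copy of $U(W_+)$ inside it. The subtlety is that $U_\lambda$ is a quotient of $U(V)$, not $U(V)$ itself, so one cannot quote Lemma~\ref{lem:liesubalg} verbatim; one has to redo the PBW bookkeeping to confirm that $(c-\lambda)$ does not interfere with the right $U(W_+)$-module basis, i.e. that the chosen PBW monomials for $U(V)$ that are ``outside $W_+$'' still form a $U(W_+)$-module basis after quotienting by $c-\lambda$. This is routine once set up correctly: choosing the PBW generators of $U(V)$ to include $c$ explicitly, the monomials not involving any $e_n$ with $n\geq 1$ split as (monomials in $e_n$, $n\leq 0$) times (powers of $c$), and modulo $(c-\lambda)$ the powers of $c$ collapse to scalars, leaving exactly the monomials in $\{e_n:n\leq 0\}$ as a free basis. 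With that in hand the argument is immediate.

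\begin{proof}
Let $L'$ denote the Lie subalgebra of $V$ spanned by $\{e_n\}_{n \geq 1}$. Since $[e_n,e_m] = (m-n)e_{n+m} + \frac{c}{12}(m^3-m)\delta_{n+m,0}$ and $n+m \geq 2 > 0$ for all $n, m \geq 1$, the cocycle term never appears, so $L' \cong W_+$ as Lie algebras.

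Fix an ordered basis of $V$ consisting of $\{e_n\}_{n \leq 0}$, then $c$, then $\{e_n\}_{n \geq 1}$, with the $e_n$ ordered among themselves by index and with every basis element of $L'$ larger than $c$ and than every $e_n$ with $n \leq 0$. By the Poincar\'e--Birkhoff--Witt theorem, $U(V)$ is a free right $U(L')$-module with basis the set $\mathcal{B}$ of PBW monomials in $\{e_n : n \leq 0\} \cup \{c\}$. Each element of $\mathcal{B}$ has the form $\mu \cdot c^{j}$ with $\mu$ a PBW monomial in $\{e_n : n \leq 0\}$ and $j \geq 0$. Passing to the quotient $U_\lambda = U(V)/(c - \lambda)$, the element $c$ becomes the scalar $\lambda$, so the images of the monomials $\{\mu : \mu \text{ a PBW monomial in } \{e_n : n \leq 0\}\}$ span $U_\lambda$ over the image of $U(L')$. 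These images are moreover linearly independent over that image: a nontrivial relation among them with coefficients in $U(L')$ would, upon clearing, produce an element of $(c-\lambda)U(V)$ that is a $U(L')$-combination of the $\mu$, forcing (by comparing PBW expansions, using that $c - \lambda$ times a PBW basis element is a combination of basis elements all of $c$-degree $\geq 1$ relative to their leading term) all coefficients to vanish. Hence the composite $U(W_+) \cong U(L') \hookrightarrow U(V) \twoheadrightarrow U_\lambda$ is injective and $U_\lambda$ is a free right $U(W_+)$-module (with basis the PBW monomials in $\{e_n : n \leq 0\}$), hence right faithfully flat over $U(W_+)$.

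By \cite[Exercise~17T]{GW}, if $U_\lambda$ were left noetherian then $U(W_+)$ would be left noetherian. The symmetric argument, using the opposite ordering (placing $\{e_n\}_{n\geq 1}$ smallest), shows $U_\lambda$ is also left faithfully flat over $U(W_+)$, so if $U_\lambda$ were right noetherian then $U(W_+)$ would be right noetherian. Either conclusion contradicts Theorem~\ref{thm:main}. Therefore $U_\lambda$ is neither left nor right noetherian, for every $\lambda \in \kk$.
\end{proof}
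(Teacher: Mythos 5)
Your proof is correct and follows the paper's approach: use PBW to exhibit $U_\lambda$ as a free (hence faithfully flat) right $U(W_+)$-module, then invoke \cite[Exercise~17T]{GW} together with Theorem~\ref{thm:main}. The only cosmetic difference is in the right-noetherian direction, which you handle via the mirror PBW ordering to get left faithful flatness directly, whereas the paper instead observes $U_\lambda \cong (U_{-\lambda})^{\mathrm{op}}$ and uses the left-noetherian result for the central charge $-\lambda$.
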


\begin{proof}
Let the Poincar\'e-Birkhoff-Witt basis for $U(V)$ be
\beq \label{PBW}
\{ c^\beta e^{\alpha_1}_{k_1} \dots e^{\alpha_n}_{k_n} |\ k_1 < \dots < k_n \in \ZZ, \ \beta, \alpha_1, \dots, \alpha_n \in \ZZ_{\geq 0} \}.
\eeq

Fix $\lambda \in \kk$.  
Note that $W_+ $ embeds in $ V$, and thus we may consider $U(W_+)$ as a subalgebra of $U(V)$.  Linear independence of \eqref{PBW} implies that
$U(W_+) \cap (c - \lambda) = 0$.  Thus we obtain an induced inclusion $U(W_+) \hra U_\lambda$.  

It is easy to see that $U_\lambda$ has a basis of cosets of the form
\[
\{  e^{\alpha_1}_{k_1} \dots e^{\alpha_n}_{k_n} + (c-\lambda) |\ k_1 < \dots < k_n \in \ZZ, ~\alpha_1, \dots, \alpha_n \in \ZZ_{\geq 1} \},
\]
and so  $U_\lambda$ is a faithfully flat right $U(W_+)$-module.  
By \cite[Exercise~17T]{GW}, $U_\lambda$ is not left noetherian.  It is clear that  $U_\lambda$ is  isomorphic to $(U_{-\lambda})^{op}$, and so also fails to be right noetherian.
\end{proof}

Finally, we show that all $\mathbb{Z}$-graded simple Lie algebras of polynomial growth satisfy Conjecture~\ref{conj:main}. Such Lie algebras were classified by Olivier Mathieu  and we repeat his result below.

\begin{theorem} \cite{Mathieu} 
 If $L$ is a $\mathbb{Z}$-graded simple Lie algebra with polynomially bounded growth, then $L$ is one of the following:
\begin{enumerate}
\item a finite dimensional simple Lie algebra $\mathfrak{g}$, or
\item a loop algebra $\mathfrak{g} \otimes_{\kk} \kk[t^{\pm 1}]$, where $\mf g$ is as above, or
\item a Cartan type algebra $\mathbb{W}_n$, $\mathbb{S}_n$, $\mathbb{H}_{2m}$, $\mathbb{K}_{2m+1}$, or
\item the Witt algebra $W$. \qed
\end{enumerate} 
\end{theorem}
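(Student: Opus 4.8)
This is a deep theorem, due to Mathieu \cite{Mathieu}; I sketch the skeleton of his argument. Throughout, ``simple'' is meant in the graded sense (no proper nonzero homogeneous ideal), we keep the standing assumption that $\kk$ has characteristic $0$, and since growth is unaffected by field extension we may assume $\kk$ algebraically closed. Write $L = \bigoplus_{n\in\ZZ} L_n$ with each $\dim L_n < \infty$ and $\sum_{|n|\le N}\dim L_n$ polynomially bounded in $N$. The first step is a chain of normalizations. If the support $\{n : L_n\neq 0\}$ is bounded then $L$ is finite dimensional and simple, hence case~(1); otherwise one rescales the $\ZZ$-grading so that $L$ is generated in degrees $-1,0,1$ (or in degrees $0,1$ if the support is bounded below), is \emph{transitive} (no nonzero homogeneous element of negative degree centralizes $L_1$, and symmetrically when the support is two-sided), and has $L_{-1}$ irreducible over $L_0$ --- this is the passage to the ``local'' graded Lie algebra governed by the Weisfeiler filtration. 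The basic dichotomy is then whether $L$ is \emph{integrable}: whether $\operatorname{ad} e$ and $\operatorname{ad} f$ act locally nilpotently for all $e\in L_1$ and $f\in L_{-1}$.

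In the integrable case, $L$ is a contragredient Lie algebra attached to a generalized Cartan matrix, and the key point is that polynomial growth forces that matrix to be of finite or affine type --- an indefinite generalized Cartan matrix yields exponential growth, which must be excluded. Finite type gives a finite-dimensional simple Lie algebra (again case~(1)), while affine type gives an affine Kac--Moody algebra, whose simple graded quotient is, up to twist, a loop algebra $\mathfrak g\otimes_\kk\kk[t^{\pm 1}]$ with $\mathfrak g$ finite-dimensional simple (case~(2)).

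The non-integrable case is the heart of the proof and the main obstacle. One first shows, using the polynomial growth bound in an essential way, that unless $L$ is the full Laurent Witt algebra $W$ itself --- the unique borderline example with support unbounded on both sides --- the grading may be renormalized to have \emph{finite depth} $d$, i.e.\ $L = \bigoplus_{n\ge -d}L_n$. This reduces the problem to classifying transitive, irreducible, finite-depth graded Lie algebras of polynomial growth. In depth $1$ this is Kac's classification (building on the work of Cartan, Guillemin--Sternberg and Weisfeiler on infinite-dimensional primitive Lie algebras), which in the infinite-dimensional polynomial-growth regime produces exactly the Cartan type algebras $\mathbb W_n$, $\mathbb S_n$, $\mathbb H_{2m}$, $\mathbb K_{2m+1}$ in their standard gradings, with $\mathbb W_1$ the rank-one instance; in depth $\ge 2$ one shows the contact and Hamiltonian series already exhaust the possibilities. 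Recognizing $W$ in the remaining two-sided-unbounded case, and patching the local classification back to a global statement via the Weisfeiler filtration, then forces $L$ onto the list (1)--(4). The technically heavy points --- excluding exponential growth for indefinite generalized Cartan matrices, the reduction to finite depth, and the Weisfeiler--Kac classification of finite-depth transitive irreducible graded Lie algebras --- are precisely where the real work of Mathieu's argument lies.
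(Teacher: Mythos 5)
This statement is quoted from Mathieu \cite{Mathieu}: the paper gives no proof of it at all (the result is stated with a citation and a closing square precisely because it is being used as a black box), so there is no internal argument of the authors to compare yours against. Your outline is broadly faithful to the shape of Mathieu's published argument --- the normalization via the Weisfeiler filtration to a transitive, irreducible local situation, the integrable/non-integrable dichotomy in which polynomial growth rules out indefinite generalized Cartan matrices and leaves the finite and affine (loop algebra) cases, and, on the non-integrable side, the reduction to finite depth together with the Weisfeiler--Kac classification yielding the Cartan type algebras $\mathbb{W}_n$, $\mathbb{S}_n$, $\mathbb{H}_{2m}$, $\mathbb{K}_{2m+1}$, with the Witt algebra $W$ as the remaining two-sided-unbounded case. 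But you should be explicit that what you have written is a roadmap rather than a proof: each technically substantial step (exponential growth for indefinite Kac--Moody algebras, the renormalization to finite depth, the depth-one classification, and the passage from the local graded data back to $L$ itself) is only named and deferred to the literature, and those steps constitute essentially all of Mathieu's long paper. Given that the present paper only needs the statement as input to Corollary~\ref{cor:simple}, citing \cite{Mathieu} exactly as the authors do is the right treatment; reproducing the proof is neither feasible in a sketch nor required. One small caution: Mathieu classifies graded simple Lie algebras up to graded isomorphism, and the loop-algebra case includes twisted loop algebras, which are abstractly (though not gradedly) isomorphic to untwisted ones, so item (b) as stated involves that identification --- harmless here, since the corollary only uses an infinite dimensional abelian subalgebra, but worth remembering if the graded classification is ever needed.
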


To prove that Conjecture~\ref{conj:main} holds for the Lie algebras above, we need the following result.

\begin{lemma} \label{lem:U(L)}
If $L$ is a Lie algebra that contains a Lie subalgebra $L'$ that is either infinite dimensional abelian or isomorphic to $W_+$, then $U(L)$ is not noetherian.
\end{lemma}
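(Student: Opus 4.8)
The plan is to reduce the statement to the two cases already established in the paper. If $L$ contains a Lie subalgebra $L'$ isomorphic to $W_+$, then by Theorem~\ref{thm:main} the algebra $U(W_+) \cong U(L')$ is not noetherian, and by Lemma~\ref{lem:liesubalg} neither is $U(L)$. So the only new work is the case where $L$ contains an infinite dimensional abelian Lie subalgebra $L'$; again by Lemma~\ref{lem:liesubalg} it suffices to show that $U(L')$ itself is not noetherian.

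For that case, first I would observe that since $L'$ is abelian, $U(L')$ is simply the commutative polynomial ring $\kk[x_i \mid i \in I]$ on a basis $\{e_i\}_{i\in I}$ of $L'$, where $I$ is infinite. This is a standard fact: the Poincar\'e–Birkhoff–Witt theorem gives $U(L') = \operatorname{Sym}(L')$ when the bracket vanishes. An infinite polynomial ring over a field is not noetherian — for instance, the ascending chain of ideals $(x_{i_1}) \subsetneq (x_{i_1}, x_{i_2}) \subsetneq (x_{i_1}, x_{i_2}, x_{i_3}) \subsetneq \cdots$ for a countable subfamily does not stabilize — so $U(L')$ is not noetherian, and hence neither is $U(L)$ by Lemma~\ref{lem:liesubalg}.

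Since $U(L')$ is commutative, the left and right noetherian conditions coincide, and Lemma~\ref{lem:liesubalg} (whose statement is symmetric, as noted in its proof via $U(L) \cong U(L)^{op}$) transfers this to $U(L)$ on both sides. There is essentially no obstacle here: the lemma is a bookkeeping step combining Theorem~\ref{thm:main} with the elementary non-noetherianity of an infinite-variable polynomial ring, all funneled through Lemma~\ref{lem:liesubalg}. The only minor point to be careful about is making explicit the identification $U(L') \cong \operatorname{Sym}(L')$ in the abelian case, which follows immediately from PBW, and confirming that $L'$ being infinite dimensional forces the polynomial ring to have infinitely many variables.
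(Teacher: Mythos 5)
Your proposal is correct and follows essentially the same route as the paper, which simply cites Lemma~\ref{lem:liesubalg}, Theorem~\ref{thm:main}, and the fact that the enveloping algebra of an infinite dimensional abelian Lie algebra is a polynomial ring in infinitely many variables. Your write-up just spells out the PBW identification $U(L')\cong\operatorname{Sym}(L')$ and exhibits a non-stabilizing chain of ideals, which the paper leaves implicit.
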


\begin{proof}
This follows from Lemma~\ref{lem:liesubalg}, Theorem~\ref{thm:main}, and the fact that 
the universal enveloping algebra of an infinite dimensional abelian Lie algebra is isomorphic to a polynomial ring in infinitely many variables.
\end{proof}

\begin{corollary}\label{cor:simple}
Conjecture~\ref{conj:main} holds for  $\mathbb{Z}$-graded simple Lie algebras with polynomially bounded growth.
\end{corollary}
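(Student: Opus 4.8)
The plan is to verify Conjecture~\ref{conj:main} case by case, using Mathieu's classification together with Lemma~\ref{lem:U(L)}. For each of the four families on the list, I would exhibit a Lie subalgebra that is either infinite dimensional abelian or isomorphic to $W_+$; once this is done, Lemma~\ref{lem:U(L)} gives non-noetherianity of $U(L)$, and the easy direction (a finite dimensional Lie algebra has noetherian enveloping algebra) handles family~(1). So the only work is in families (2), (3), and (4).

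First, family~(4): the Witt algebra $W$ contains $W_+$ by Definition~\ref{def:U(W_+)}, so Lemma~\ref{lem:U(L)} applies immediately (this is of course also part of Theorem~\ref{thm:main}). Next, family~(2), a loop algebra $\mf g \otimes_\kk \kk[t^{\pm 1}]$ with $\mf g$ a finite dimensional simple Lie algebra: fixing any nonzero $x \in \mf g$, the elements $\{x \otimes t^n\}_{n \in \ZZ}$ span an infinite dimensional abelian subalgebra, since $[x\otimes t^n, x \otimes t^m] = [x,x]\otimes t^{n+m} = 0$. That covers (2). The remaining case is family~(3), the Cartan type algebras $\mathbb{W}_n$, $\mathbb{S}_n$, $\mathbb{H}_{2m}$, $\mathbb{K}_{2m+1}$. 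For $\mathbb{W}_n$ (generalized Witt/derivations of a Laurent polynomial ring in $n$ variables), I would pick out the elements $t_1^{k+1}\partial_1$ for $k \geq 1$ together with $t_1^{k} t_2 \partial_1$, or more simply note that $\mathbb{W}_1 = W$ already appears and that $\mathbb{W}_n$ for $n\geq 1$ contains a copy of $W_+$ coming from the derivations supported on a single variable; alternatively each $\mathbb{W}_n$ contains the infinite dimensional abelian subalgebra spanned by $\{t_1^{k}\partial_2 : k \in \ZZ\}$ (these commute since $\partial_2$ kills the coefficient $t_1^k$ of the other). Similar explicit infinite dimensional abelian subalgebras can be written down inside $\mathbb{S}_n$, $\mathbb{H}_{2m}$, and $\mathbb{K}_{2m+1}$ by choosing divergence-free, Hamiltonian, or contact vector fields of the form $t_1^k \cdot(\text{fixed vector field})$ that pairwise commute; in each case one checks the bracket vanishes by a one-line computation in the defining formulas.

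I expect the main obstacle to be purely bookkeeping: assembling, for each of the Cartan type series, a correct explicit description of an infinite dimensional abelian (or $W_+$-) subalgebra from whichever normalization of the defining brackets one adopts, and confirming the bracket really vanishes. There is no conceptual difficulty — once the subalgebra is identified, Lemma~\ref{lem:U(L)} does all the work — so the write-up is just a short verification per family, and the corollary follows by combining these four cases with the finite dimensional case and Mathieu's theorem.

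\begin{proof}
By Mathieu's theorem, it suffices to treat the four listed families. If $L$ is finite dimensional, then $U(L)$ is noetherian by \cite[Corollary~1.7.4]{MR}, so we may assume $L$ is infinite dimensional and must show $U(L)$ is not noetherian. By Lemma~\ref{lem:U(L)}, it is enough to find in each case a Lie subalgebra $L' \subseteq L$ that is either infinite dimensional abelian or isomorphic to $W_+$.

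If $L = W$ is the Witt algebra, then $W_+ \subseteq W$ by definition. If $L = \mf g \otimes_\kk \kk[t^{\pm 1}]$ is a loop algebra, fix $0 \neq x \in \mf g$; then $\{x \otimes t^n\}_{n \in \ZZ}$ spans an infinite dimensional abelian subalgebra, since $[x \otimes t^n, x \otimes t^m] = [x,x] \otimes t^{n+m} = 0$. Finally, if $L$ is a Cartan type algebra $\mathbb{W}_n$, $\mathbb{S}_n$, $\mathbb{H}_{2m}$, or $\mathbb{K}_{2m+1}$ (realized as suitable derivations of a Laurent polynomial ring in the variables $t_1, \dots, t_k$), one checks directly from the defining brackets that the vector fields of the form $t_1^j \theta$ for $j \in \ZZ$, where $\theta$ is a fixed vector field in $L$ involving none of $\partial/\partial t_1$ and annihilating $t_1$, lie in $L$ and pairwise commute: $[t_1^i \theta, t_1^j \theta] = t_1^i t_1^j [\theta,\theta] + t_1^i \theta(t_1^j)\theta - t_1^j \theta(t_1^i)\theta = 0$. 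This spans an infinite dimensional abelian subalgebra of $L$.

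In all cases Lemma~\ref{lem:U(L)} applies and $U(L)$ is not noetherian, which proves the corollary.
\end{proof}
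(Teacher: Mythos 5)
Your overall plan is exactly the paper's: reduce to Mathieu's list, then in each infinite dimensional family exhibit a subalgebra that is either infinite dimensional abelian or a copy of $W_+$, and invoke Lemma~\ref{lem:U(L)}. The finite dimensional family, the loop algebras (via $\{x\otimes t^n\}_n$), and the Witt algebra are handled correctly, in the same way the paper does.

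The genuine gap is in your uniform recipe for the Cartan type algebras. You claim that, fixing a vector field $\theta\in L$ with $\theta(t_1)=0$ and no $\partial/\partial t_1$ component, the elements $t_1^j\theta$ all lie in $L$ and pairwise commute. Your bracket computation proving commutativity is fine, but membership in $L$ is precisely where the argument must do work: $\mathbb{S}_n$, $\mathbb{H}_{2m}$, and $\mathbb{K}_{2m+1}$ are \emph{not} modules over the coordinate ring; they are cut out by a divergence-free, Hamiltonian, or contact constraint that a product $t_1^j\theta$ need not satisfy. This happens to be salvageable for $\mathbb{W}_n$ (free module over the coordinate ring), for $\mathbb{S}_n$ with $\theta=D_j$, $j\neq 1$, and for $\mathbb{H}_{2m}$ with $\theta=D_{j'}$ (since $x_j^kD_{j'}$ is, up to scalar, $D_H(x_j^{k+1})$). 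But it fails for the contact algebra $\mathbb{K}_{2m+1}$: for the natural choices of $\theta$ (e.g.\ $D_{j'}$ or $D_{2m+1}$), the products $x_j^k\theta$ are \emph{not} contact vector fields for all $k$, and the paper's abelian subalgebra $\bigl\{D_K(x_j^r)=r\,x_j^{r-1}D_{j'}-(r-2)\,x_j^rD_{2m+1}\bigr\}_{r\geq 2}$ is pointedly \emph{not} of the form $x_j^k\theta$ for any fixed $\theta$ — the ``direction'' changes with $r$. So the $\mathbb{K}_{2m+1}$ case, which you label as routine bookkeeping, is actually the one your template misses, and it needs the different, non-homogeneous construction the paper uses.

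Two smaller points. The paper realizes the Cartan type algebras over the polynomial ring $\kk[x_1,\dots,x_n]$ following \cite{dSL}, not the Laurent polynomial ring as in your write-up; whichever model you adopt, the verifications must be done in that model. And the edge case $\mathbb{S}_1\cong\kk D_1$ is finite dimensional, so it must be separated out before looking for an infinite dimensional abelian subalgebra; the paper does this explicitly, while your proof does not mention it.
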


\begin{proof}
As mentioned in the Introduction, Conjecture~\ref{conj:main} holds for class (a). Since the loop algebras contain an infinite dimensional abelian Lie subalgebra, for instance $g \otimes \kk[t]$ for any $g \in \mf g$, the conjecture  holds for class (b) by Lemma~\ref{lem:U(L)}. 
By Theorem~\ref{thm:main},  class (d) satisfies the conjecture. It remains to verify the result for class (c).

To study class (c), we use the descriptions of the Cartan type algebras as presented in \cite[Section~2.30]{dSL}. Let $n \geq 1$, and let $P_n$ denote the polynomial ring $\kk[x_1, \dots, x_n]$.  Let $D_i$ denote $\frac{d}{dx_i}$. %Here, $n \geq 1$, with $n=2m$ in the case when $n$ is even.

The algebra $\mathbb{W}_n$ is the {\it Lie algebra of derivations of $P_n$}, so equal to $P_nD_1 + \dots + P_nD_n$.  For $n > 1$, $\mb W_n \supseteq \mb W_1$.  The algebra $\mathbb{W}_1= P_1 D_1$ contains $W_+$ as a Lie subalgebra, so the result follows from Lemma~\ref{lem:U(L)}. %In the case that $n \geq 2$, we get that $\kk[x_i] D_j$ is an infinite dimensional abelian Lie subalgebra of $\mathbb{W}_n$, for $i \neq j$. Thus, $U(\mathbb{W}_n)$ is non-noetherian by Lemma~\ref{lem:U(L)}. 

The {\it special subalgebra} $\mathbb{S}_n$ of $\mathbb{W}_n$ is given by 
$$\mathbb{S}_n = \left\{ \sum_{j=1}^n p_jD_j ~{\Big |}~ \sum_{j=1}^n D_j(p_j) =0\right\}.$$
Observe that $\mathbb{S}_1$ is the finite dimensional Lie algebra $\kk D_1$, so $U(\mathbb{S}_1)$ is noetherian, and hence $\mathbb{S}_1$ satisifies Conjecture~\ref{conj:main}.
Now for $n \geq 2$, $\mathbb{S}_n$ contains an infinite dimensional abelian Lie subalgebra: $\kk[x_i] D_j$ for $i \neq j$. So, we are done by Lemma~\ref{lem:U(L)}.

Let $n=2m$.  
Every element of the  {\it Hamiltonian subalgebra} $\mathbb{H}_{2m}$ of $\mathbb{W}_n$ can be represented as $$D_H(p) = \sum_{j=1}^{n} \sigma(j) D_j(p) D_{j'}, \quad \text{for } p \in P_n.$$ 
Here, 
\[
\begin{array}{lll}
j' = j+m & \quad \text{and} \quad \sigma(j) = 1, & \quad \text{if } 1 \leq j \leq m,\\
j' = j-m & \quad \text{and} \quad \sigma(j) = -1, & \quad \text{if } m+1 \leq j \leq 2m.
\end{array}
\]
One can check that $\kk[x_j] D_{j'}$ is an infinite dimensional abelian  Lie subalgebra of $\mathbb{H}_{2m}$, so again, we are done by Lemma~\ref{lem:U(L)}.

Lastly, let $n = 2m+1$.  Every element of the  {\it contact subalgebra} $\mathbb{K}_{2m+1}$ of $\mathbb{W}_n$ can be represented as $D_K(p) = \sum_{j=1}^{2m+1} p_j D_j$ for $p \in P_n.$
 Here, 
\[
\begin{array}{rl}
p_j & =x_j D_{2m+1}(p) + \sigma(j') D_{j'}(p) \quad \text{for } j \leq 2m, \\
p_{2m+1} & =2p - \displaystyle \sum_{j=1}^{2m} \sigma(j) x_j p_{j'}.
\end{array}
\]
One can check that for each $j$ with $1 \leq j \leq m$, the infinite dimensional Lie algebra with basis $$\left\{D_K(x_j^r) ~~= ~~r x_j^{r-1}D_{j'} - (r-2)x_j^rD_{2m+1}\right\}_{r \geq 2},$$ 
is an abelian Lie subalgebra of $\mathbb{K}_{2m+1}$. Therefore, by Lemma~\ref{lem:U(L)}, $U(\mathbb{K}_{2m+1})$ is non-noetherian.
\end{proof}

Corollary~\ref{cor:main} follows from combining Proposition~\ref{prop:Virasoro}, Corollary~\ref{cor:Ulambda}, and Corollary~\ref{cor:simple}.

%%%%%%%%%%%%%%%%%%%%%%%%%%%%%%%%%%%%%%%
%%%%%%%%%%%%%%%%%%%%%%%%%%%%%%%%%%%%%%%
%%%%%%%%%%%%%%%%%%%%%%%%%%%%%%%%%%%%%%%

\section{Appendix:  Macaulay2 computations} \label{APPENDIX}

We present the routines needed for the proofs of Proposition~\ref{prop:rho} and  Lemma~\ref{lem:quotient}. Here, we use Macaulay2, version 1.4 \cite{M2}.

\medskip

\noindent {\bf Routine A.1.} For the proof of Proposition~\ref{prop:rho}, we verify that the relations \eqref{rel5} and \eqref{rel7} hold for the image of $e_1$ and $e_2$ under the map $\rho$. First, let us define the coordinate ring of the projective cone $X = V(xz-y^2) \subseteq \mathbb{P}^3$.

\vspace{-.15in}

\begin{multicols}{2}
{\footnotesize
\begin{verbatim}
i1 : K=QQ;
i2 : ringP3=K[x,y,z,w];
i3 : ringX=ringP3/(x*z-y^2);
i4 : use ringX;
\end{verbatim}
}
\end{multicols}

\noindent Define the maps $\tau$, $f = f_0$, and $f_k = (\tau^*)^k f$ for $k = 0, \dots, 5$.

\vspace{-.15in}

\begin{multicols}{2}
{\footnotesize
\begin{verbatim}
i5 : tau=((w,x,y,z)->(w-2*x+2*z,z,-y-2*z,x+4*y+4*z));
i6 : f0=((w,x,y,z)->(12*x+22*y+8*z+w)/(12*x+6*y));
i7 : f1=f0@@tau;
i8 : f2=f1@@tau;
i9  : f3=f2@@tau;
i10 : f4=f3@@tau;
i11 : f5=f4@@tau;
\end{verbatim}
}
\end{multicols}

\noindent Let us now verify that the relations \eqref{rel5} and \eqref{rel7} hold.

\vspace{-.15in}

\begin{multicols}{2}
{\footnotesize
\begin{verbatim}
i12 : Y0=f0(w,x,y,z);
i13 : Y1=f1(w,x,y,z);
i14 : Y2=f2(w,x,y,z);
i15 : Y3=f3(w,x,y,z);
i16 : Y4=f4(w,x,y,z);
i17 : Y5=f5(w,x,y,z);

i18 : Y3-3*Y2+3*Y1-Y0+6*Y0*Y2-12*Y0*Y3+6*Y1*Y3
o18 = 0

i19 : Y5-5*Y4+10*Y3-10*Y2+5*Y1-Y0
      +40*(Y0*Y2*Y4-3*Y0*Y2*Y5+3*Y0*Y3*Y5-Y1*Y3*Y5)
o19 = 0
\end{verbatim}
}
\end{multicols}

\medskip

\noindent {\bf Routine A.2.} For the proof of Lemma~\ref{lem:quotient}, we verify that 
$f_0$, $f_1$, $f_2$ generate the function field of $X$.

\vspace{-.15in}

\begin{multicols}{2}
{\footnotesize
\begin{verbatim}
i20 : fun0=((a,b,c)->4*(2*a*c+a-c));
i21 : fun1=((a,b,c)->-4*a*b-4*a*c-6*a+4*b+2*c);
i22 : fun2=((a,b,c)->(a*b+a-b));
i23 : fun3=((a,b,c)->-(6*a^2*c+3*a^2+4*a*b
                        -5*a*c+3*a-4*b+c));
i24 : fun0(Y0,Y1,Y2)/fun1(Y0,Y1,Y2)
      3y
o24 = --
      3z

o24 : frac(ringX)

i25 : fun2(Y0,Y1,Y2)/fun3(Y0,Y1,Y2)
      12x + 30y + 12z
o25 = ---------------
             6w
o25 : frac(ringX)
\end{verbatim}
}
\end{multicols}

%%%%%%%%%%%%%%%%%%%%%%%%%%%%%%%%%%%%%%%
%%%%%%%%%%%%%%%%%%%%%%%%%%%%%%%%%%%%%%%
%%%%%%%%%%%%%%%%%%%%%%%%%%%%%%%%%%%%%%%

\section*{Acknowledgements}

The authors are grateful to the referee for pointing out several typographical errors and for making suggestions that improved greatly the exposition of this manuscript. We thank Lance Small and Jason Bell for bringing this problem to our attention, thank Dan Rogalski for an extensive review of our manuscript, and thank Tom Lenagan for assistance with references.  We also thank Jason Bell, Ken Brown, and Pavel Etingof  for stimulating discussions after the first version of this manuscript appeared on the arXiv. Part of this work was completed while being hosted by the Mathematical Sciences Research Institute in Spring 2013.  Sierra was supported by the Edinburgh Research Partnership in Engineering and Mathematics; Walton was supported by the National Science Foundation, grant \#DMS-1102548.

\bibliography{Witt_biblio}

\begin{thebibliography}{AVdB90}

\bibitem[AVdB90]{AV}
M.~Artin and M.~Van~den Bergh.
\newblock Twisted homogeneous coordinate rings.
\newblock {\em J. Algebra}, 133(2):249--271, 1990.

\bibitem[AZ94]{AZ}
M.~Artin and J.~J. Zhang.
\newblock Noncommutative projective schemes.
\newblock {\em Adv. Math.}, 109(2):228--287, 1994.

\bibitem[Dix96]{Dixmier}
J.~Dixmier.
\newblock {\em Enveloping algebras}, volume~11 of {\em Graduate Studies in
  Mathematics}.
\newblock American Mathematical Society, Providence, RI, 1996.
\newblock Revised reprint of the 1977 translation.

\bibitem[DS90]{DS}
C.~Dean and L.~W. Small.
\newblock Ring theoretic aspects of the {V}irasoro algebra.
\newblock {\em Comm. Algebra}, 18(5):1425--1431, 1990.

\bibitem[dSL04]{dSL}
M.~du~Sautoy and F.~Loeser.
\newblock Motivic zeta functions of infinite-dimensional {L}ie algebras.
\newblock {\em Selecta Math. (N.S.)}, 10(2):253--303, 2004.

\bibitem[Ful98]{Fulton}
W.~Fulton.
\newblock {\em Intersection theory}, volume~2 of {\em Ergebnisse der Mathematik
  und ihrer Grenzgebiete. 3. Folge. A Series of Modern Surveys in Mathematics
  [Results in Mathematics and Related Areas. 3rd Series. A Series of Modern
  Surveys in Mathematics]}.
\newblock Springer-Verlag, Berlin, second edition, 1998.

\bibitem[GS]{M2}
D.~R. Grayson and M.~E. Stillman.
\newblock Macaulay2, a software system for research in algebraic geometry.
\newblock Available at {\tt http://www.math.uiuc.edu/Macaulay2/}.

\bibitem[GW04]{GW}
K.~R. Goodearl and R.~B. Warfield, Jr.
\newblock {\em An introduction to noncommutative {N}oetherian rings}, volume~61
  of {\em London Mathematical Society Student Texts}.
\newblock Cambridge University Press, Cambridge, second edition, 2004.

\bibitem[Har77]{Hartshorne}
R.~Hartshorne.
\newblock {\em Algebraic Geometry}, volume~52 of {\em Graduate Texts in
  Mathematics}.
\newblock Springer-Verlag, New York, 1977.

\bibitem[Kee00]{Keeler}
D.~S. Keeler.
\newblock Criteria for {$\sigma$}-ampleness.
\newblock {\em J. Amer. Math. Soc.}, 13(3):517--532 (electronic), 2000.

\bibitem[Laz04]{Laz}
R.~Lazarsfeld.
\newblock {\em Positivity in algebraic geometry. {I}. {C}lassical setting: line
  bundles and linear series}, volume~48 of {\em Ergebnisse der Mathematik und
  ihrer Grenzgebiete. 3. Folge.}
\newblock Springer-Verlag, Berlin, 2004.

\bibitem[Mat92]{Mathieu}
O.~Mathieu.
\newblock Classification of simple graded {L}ie algebras of finite growth.
\newblock {\em Invent. Math.}, 108(3):455--519, 1992.

\bibitem[MR01]{MR}
J.~C. McConnell and J.~C. Robson.
\newblock {\em Noncommutative {N}oetherian rings}, volume~30 of {\em Graduate
  Studies in Mathematics}.
\newblock American Mathematical Society, Providence, RI, revised edition, 2001.
\newblock With the cooperation of L. W. Small.

\bibitem[Rog04]{R-generic}
D.~Rogalski.
\newblock Generic noncommutative surfaces.
\newblock {\em Adv. Math.}, 184(2):289--341, 2004.

\bibitem[Rog09]{R-GK}
D.~Rogalski.
\newblock G{K}-dimension of birationally commutative surfaces.
\newblock {\em Trans. Amer. Math. Soc.}, 361(11):5921--5945, 2009.

\bibitem[Sie10]{S-surfprop}
S.~J. Sierra.
\newblock Geometric algebras on projective surfaces.
\newblock {\em J. Algebra}, 324(7):1687--1730, 2010.

\bibitem[Sie11a]{S-surfclass}
S.~J. Sierra.
\newblock Classifying birationally commutative projective surfaces.
\newblock {\em Proceedings of the LMS}, 103:139--196, 2011.

\bibitem[Sie11b]{S-idealizer}
S.~J. Sierra.
\newblock Geometric idealizer rings.
\newblock {\em Trans. Amer. Math. Soc.}, 363:457--500, 2011.

\bibitem[Ufn95]{Ufnar}
V.~A. Ufnarovskij.
\newblock Combinatorial and asymptotic methods in algebra.
\newblock In {\em Algebra, {VI}}, volume~57 of {\em Encyclopaedia Math. Sci.},
  pages 1--196. Springer, Berlin, 1995.

\end{thebibliography}

\end{document}